\newcommand{\T}{\mathbb{T}} 
\newcommand{\C}{\mathbb{C}} 
\newcommand{\D}{\mathbb{D}} 
\newcommand{\Z}{\mathbb{Z}} 
\newcommand{\mcS}{\mathcal{S}}
\newcommand{\mcA}{\mathcal{A}}
\newcommand{\mcH}{\mathcal{H}}
\newcommand{\mcL}{\mathcal{L}}
\numberwithin{equation}{section}
\newtheorem{theorem}{Theorem}[section]
\newtheorem{prop}[theorem]{Proposition}
\newtheorem{corollary}[theorem]{Corollary}
\newtheorem{lemma}[theorem]{Lemma}
\theoremstyle{definition}
\newtheorem{definition}[theorem]{Definition}
\newtheorem{question}[theorem]{Question}
\newtheorem{remark}[theorem]{Remark}
\title{Kummert's approach to realization on the bidisk}
\author{Greg Knese}
\address{Washington University in St. Louis\\ Department of Mathematics \& Statistics \\
St. Louis, MO 63130}
\email{geknese@wustl.edu}
\subjclass[2010]{Primary 47A57; Secondary 32A17, 30H05, 30J05}
\keywords{Inner function, transfer function realization, Schur-Agler class, 
Agler decomposition, Schur class, bidisk, polydisk, bidisc, polydisc, Fej\'er-Riesz lemma}
\thanks{Partially supported by NSF grant DMS-1900816}
\date{\today}
\begin{document}

\begin{abstract}
We give a simplified
exposition of Kummert's approach to 
proving that every matrix-valued rational
inner function in two variables
has a minimal unitary transfer function realization.
A slight modification of the approach
extends to rational functions
which are isometric on the two-torus
and we use this to 
give a largely elementary new proof
of the existence of Agler decompositions
for every matrix-valued Schur function in
two variables. 
We use a recent result of
Dritschel to prove two variable
matrix-valued rational Schur functions
always have finite-dimensional contractive 
transfer function realizations.
Finally, we prove that two variable
matrix-valued polynomial
inner functions have transfer function
realizations built out of special nilpotent linear
combinations.
\end{abstract}

\maketitle

\section{Introduction}

The goal of this paper is to give a simple proof
and several applications of the following theorem.

\begin{theorem}[Main Theorem] \label{mainthm}
Assume $S: \D^2 \to \C^{M\times N}$ is
rational with no poles in $\D^2$ and satisfies $S^*S = I_N$
on $\T^2$ away from the zero set of the denominator of $S$.

Then, there exist an integer $r$ and an $(M+r)\times (N+r)$ isometric matrix 
$U = \begin{pmatrix} A & B \\ C & D \end{pmatrix}$
such that
\begin{equation} \label{STFR}
S(z) = A + B \Delta(z) (I-D \Delta(z))^{-1} C
\end{equation}
where $\Delta(z_1,z_2) = z_1 P_1 + z_2 P_2$ and $P_1,P_2$ are
orthogonal projections with $P_1+P_2 = I_r$.  
\end{theorem}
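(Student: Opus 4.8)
The plan is to derive \eqref{STFR} from a finite-rank Agler-type decomposition of the defect kernel $I_N-S(w)^*S(z)$, and to produce that decomposition by reducing to one complex variable and applying the one-variable matrix-valued Fej\'er--Riesz lemma.

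\emph{Step 1 (lurking isometry).} First I would record that \eqref{STFR}, with $U=\begin{pmatrix}A&B\\C&D\end{pmatrix}$ isometric and $\Delta(z)=z_1P_1+z_2P_2$, $P_1+P_2=I_r$, is equivalent to the existence of rational maps $k_j:\D^2\to\C^{r_j\times N}$, $j=1,2$, with $r_1+r_2=r$, such that
\begin{equation} \label{agdec}
I_N-S(w)^*S(z)=(1-\overline{w_1}z_1)\,k_1(w)^*k_1(z)+(1-\overline{w_2}z_2)\,k_2(w)^*k_2(z).
\end{equation}
Given \eqref{agdec}, set $\mcH=\C^{r_1}\oplus\C^{r_2}=\C^r$, let $P_j$ be the projection onto the $j$-th summand, $\Delta(z)=z_1P_1+z_2P_2$, and $k(z)=k_1(z)\oplus k_2(z)$. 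Since $\Delta(w)^*\Delta(z)=\overline{w_1}z_1P_1+\overline{w_2}z_2P_2$, \eqref{agdec} says
\[
\ip{\xi}{\eta}+\ip{\Delta(z)k(z)\xi}{\Delta(w)k(w)\eta}=\ip{S(z)\xi}{S(w)\eta}+\ip{k(z)\xi}{k(w)\eta}
\]
for all $\xi,\eta\in\C^N$ and $z,w\in\D^2$, so the map $\begin{pmatrix}\xi\\ \Delta(z)k(z)\xi\end{pmatrix}\mapsto\begin{pmatrix}S(z)\xi\\ k(z)\xi\end{pmatrix}$ is a well-defined isometry from a subspace of $\C^N\oplus\mcH$ into $\C^M\oplus\mcH$. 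Because $S(z)^*S(z)=I_N$ at a point of $\T^2$ forces $N\le M$, this isometry extends to an isometry $U=\begin{pmatrix}A&B\\C&D\end{pmatrix}\colon\C^N\oplus\mcH\to\C^M\oplus\mcH$. Comparing block rows gives $S(z)\xi=A\xi+B\Delta(z)k(z)\xi$ and $(I-D\Delta(z))k(z)\xi=C\xi$; as $\|D\|\le1$ and $\|\Delta(z)\|=\max(|z_1|,|z_2|)<1$ on $\D^2$, the operator $I-D\Delta(z)$ is invertible, and eliminating $k(z)\xi$ yields \eqref{STFR}. (The reverse implication is the same algebra using $U^*U=I$, and also justifies the displayed identity.) Thus the whole problem is to establish \eqref{agdec} with $r_1,r_2<\infty$.

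\emph{Step 2 (the Fej\'er--Riesz engine).} Here I would write $S=P/q$ with $P\in\C^{M\times N}[z]$, $q$ a scalar polynomial, coprime, $q$ zero-free on $\D^2$, and $\deg S=(n,m)$; the hypothesis becomes the polynomial identity $P(z)^*P(z)=\overline{q(z)}q(z)\,I_N$ on $\T^2$. Put $E(z,w):=\overline{q(w)}q(z)\,I_N-P(w)^*P(z)$, so $I_N-S(w)^*S(z)=E(z,w)/(\overline{q(w)}q(z))$ off the zeros of $q$ and $E$ vanishes on the diagonal of $\T^2$. The idea is to peel off the variables one at a time. Freezing $z_2=w_2\in\T$, the function $z_1\mapsto S(z_1,z_2)$ is rational and satisfies $S(\cdot,z_2)^*S(\cdot,z_2)=I_N$ on $\T$, hence is one-variable inner; so $I_N-S(w_1,z_2)^*S(z_1,z_2)$ is a positive kernel in $z_1,w_1$, and clearing $q$ one splits off a factor $1-\overline{w_1}z_1$ times a positive matrix trigonometric polynomial in $z_1$ of degree $<n$, which the one-variable Fej\'er--Riesz lemma factors as $k_1(w_1,z_2)^*k_1(z_1,z_2)$. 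The real work is to arrange this so that the matrices involved are polynomial in $(z_2,\overline{z_2})$ of controlled degree, and so that after subtracting $(1-\overline{w_1}z_1)k_1(w)^*k_1(z)$ from $I_N-S(w)^*S(z)$ the remainder is exactly $(1-\overline{w_2}z_2)$ times a positive matrix trigonometric polynomial in $z_2$; a second application of the one-variable Fej\'er--Riesz lemma, now in $z_2$, then produces $k_2$ and closes \eqref{agdec}. Each factorization has size bounded in terms of $(n,m)$ and $N$, so $r_1,r_2$ are finite, giving the integer $r$.

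\emph{Main obstacle and the non-square case.} The substance lies entirely in Step 2: the one-variable reductions and the extraction of the factors $1-\overline{w_j}z_j$ on the appropriate slices are routine (the latter by a one-variable division/Nullstellensatz argument on $E$ restricted to $z_2=w_2\in\T$), but it is not a priori clear that the $z_1$-factorization can be carried out uniformly --- polynomially --- in $z_2$, nor that the resulting leftover really lies in the span of the $(1-\overline{w_2}z_2)$-term; getting this bookkeeping right, so that the two Fej\'er--Riesz steps combine into a single isometry and a single $\Delta(z)=z_1P_1+z_2P_2$, is the heart of Kummert's approach. Finally, the hypothesis is used only through the torus identity $P^*P=\overline{q}q\,I_N$ and the inequality $M\ge N$ it forces, and neither requires $S$ to be square; so the case of $S$ merely rational and isometric on $\T^2$ needs no change beyond permitting $M>N$, which is the advertised slight modification.
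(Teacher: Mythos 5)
There is a genuine gap, and you have in fact flagged it yourself. Your Step 1 is fine: it is the finite-dimensional lurking-isometry equivalence between \eqref{STFR} and a two-term finite-rank decomposition of $I_N-S(w)^*S(z)$, exactly as in the paper's Equivalences Theorem (Theorem \ref{equivalences}), with the correct observation that $S^*S=I_N$ on $\T^2$ forces $M\ge N$ so the isometry extends. But everything then hinges on Step 2, and Step 2 is only a plan. Two things are missing. First, the uniformity in $z_2$: you propose to factor the sliced kernel "slice-wise'' and then worry that the factorization can be made polynomial in $z_2$. The paper resolves this not slice-by-slice but by observing that the coefficients $T_{jk}(z_2)$ obtained from dividing $\overline{p(w)}p(z)I-Q(w)^*Q(z)$ by $1-\bar w_1z_1$ assemble into a single matrix \emph{Laurent polynomial} $T(z_2)$ which is PSD on $\T$, and then applying the one-variable matrix Fej\'er--Riesz theorem (Theorem \ref{genFR}) once, globally, to get $A(z_2)$ together with a rational right inverse $B(z_2)$ analytic in $\D$ --- the right inverse being essential for the next point.

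Second, and more seriously, your mechanism for the $z_2$-term does not work as stated: the leftover after subtracting $(1-\bar w_1z_1)k_1(w)^*k_1(z)$, divided by $1-\bar w_2z_2$, is a \emph{two-point kernel} in $(w,z)\in\D^2\times\D^2$, not a one-variable matrix trigonometric polynomial in $z_2$, so "a second application of the one-variable Fej\'er--Riesz lemma in $z_2$'' cannot produce the needed global factorization $k_2(w)^*k_2(z)$ (applied slice-wise in $z_1$ it merely reproduces the same uniformity problem one variable later). Proving that this leftover kernel is positive semi-definite is precisely the heart of the theorem, and you give no argument for it. The paper's route is different: using $A(z_2)$ and its analytic right inverse $B(z_2)$ it builds a \emph{parametrized} isometry $U(z_2)$ solving $U(z_2)\bigl(p(z)I,\ z_1A(z_2)\Lambda(z_1)\bigr)^t=\bigl(Q(z),\ A(z_2)\Lambda(z_1)\bigr)^t$, shows $U$ is a one-variable rational iso-inner function of $z_2$, and then applies the one-variable realization theorem (Theorem \ref{onevarthmdetailed} plus Theorem \ref{equivalences}) to $U$ itself to get a \emph{constant} isometry $V$; multiplying through by $\bigl(p(z)I,\ z_1A(z_2)\Lambda(z_1)\bigr)^t$ then exhibits the full two-variable decomposition, with positivity of the $z_2$-term coming for free from the one-variable theorem applied to $U$ rather than from any direct analysis of the residual. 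Since you explicitly defer exactly this step ("getting this bookkeeping right \dots is the heart of Kummert's approach''), the proposal is an outline of the strategy rather than a proof of the Main Theorem.
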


Above $\D^2 = \{z=(z_1,z_2)\in \C^2: |z_1|,|z_2|<1\}$ is
the unit bidisk and $\T^2 = \{(z_1,z_2)\in \C^2: |z_1|=|z_2|=1\}$
is the two-torus (or bitorus).  
We shall call functions that satisfy the hypotheses
of this theorem \emph{rational iso-inner functions}.
Formulas in the conclusion of this theorem such as \eqref{STFR}, 
which are built out of block operators, 
will be called \emph{transfer function realizations} (or TFRs).
If the operator is a finite matrix we will call it a finite TFR
and if we have extra information about the operator involved
we will incorporate it into the terminology.  For example, the
above theorem asserts the existence of a ``finite isometric TFR''
for two variable rational iso-inner functions.

This theorem is due to Kummert
in the square case $M=N$ \cite{Kummert89}.  
Kummert's theorem was ahead of its time and its
proof was both ingenious and largely elementary.
At the same time, Kummert's argument 
\emph{seems} complicated
and the engineering terminology
may obscure the underlying concepts for some,
so one of our main goals is to give a 
simplified, conceptual, and entirely mathematical account of Kummert's approach.
We also give an algorithm for constructing
the matrix $U$.
Motivation for doing so comes from
recent interest in the wavelet community in transfer
function formulas in one and several variables \cite{CCCP}.
We have presented generalizations
of our simplified argument in a couple of papers 
\cites{K11, GIK16},
but the generalizations can also
potentially obscure the underlying concepts.
A minor adjustment allows us to treat the non-square case
$M\ne N$, which in turn allows us to give
possibly the most elementary and direct
proof of the following seminal theorem of Agler.

\begin{theorem}[Agler \cites{Agler1,Agler2}] \label{Aglerthm}
Let $f:\D^2\to \C^{M\times N}$ be holomorphic
and $\|f(z)\|\leq 1$ for all $z\in \D^2$.
Then, $f$ has a contractive TFR:
there exists a contractive operator $T$ on 
some Hilbert space with block
decomposition
$T = \begin{pmatrix} A& B \\ C & D\end{pmatrix}$
such that
\[
f(z) = A + B \Delta(z) (I-D\Delta(z))^{-1} C
\]
where $\Delta(z) = z_1 P_1+z_2 P_2$ and $P_1,P_2$ 
are pairwise orthogonal orthogonal projections which
sum to the identity on the domain of $D$.
\end{theorem}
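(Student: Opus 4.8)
The plan is to deduce Theorem~\ref{Aglerthm} from the method behind Theorem~\ref{mainthm} in three stages: reduce to producing an \emph{Agler decomposition} of $f$; reduce that to the case of a rational function strictly contractive on a neighborhood of $\cbi$; then run the Fej\'er--Riesz construction of Kummert's approach on that case and pass to a limit.

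\textbf{Stage 1: reduction to an Agler decomposition.} It is enough to find positive semidefinite $\C^{M\times M}$-valued kernels $K_1,K_2$ on $\D^2$ with
\begin{equation}\label{eq:agdec}
I_M-f(z)f(w)^* = (1-z_1\bar w_1)K_1(z,w)+(1-z_2\bar w_2)K_2(z,w),\qquad z,w\in\D^2.
\end{equation}
Given \eqref{eq:agdec}, the contractive TFR comes from the now-standard ``lurking isometry'' argument: take holomorphic Kolmogorov factorizations $K_j(z,w)=G_j(z)G_j(w)^*$ with $G_j(z)\colon\mcH_j\to\C^M$, set $\mcH=\mcH_1\oplus\mcH_2$, let $P_j$ be the projection onto $\mcH_j$ and $\Delta(z)=z_1P_1+z_2P_2$, and put $G(z)=\begin{pmatrix}G_1(z)&G_2(z)\end{pmatrix}$. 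Then \eqref{eq:agdec} rearranges to
\[
I_M+G(z)\Delta(z)\Delta(w)^*G(w)^*=f(z)f(w)^*+G(z)G(w)^*,
\]
so $\begin{pmatrix}I_M\\ \Delta(w)^*G(w)^*\end{pmatrix}c\mapsto\begin{pmatrix}f(w)^*\\ G(w)^*\end{pmatrix}c$ is a well-defined isometry between subspaces of $\C^M\oplus\mcH$ and $\C^N\oplus\mcH$; extending it (after enlarging $\mcH$ if needed, folding the added summand into $\mcH_1$) to an isometry $W$ and reading off its action on the defining vectors, a push-through identity yields the realization formula \eqref{STFR} with $f$ in place of $S$ and $T=\begin{pmatrix}A&B\\C&D\end{pmatrix}=W^*$ a contraction; here $I-D\Delta(z)$ is invertible on $\D^2$ because $\|D\Delta(z)\|<1$ there. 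All of this is routine.

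\textbf{Stage 2: reduction to the strictly contractive rational case, and the limit.} For $r,\rho\in(0,1)$ the function $\rho f(r\,\cdot)$ is holomorphic on a neighborhood of $\cbi$ with $\sup_{\cbi}\|\rho f(r\,\cdot)\|\le\rho<1$; truncating its Taylor expansion at a high enough degree produces a matrix \emph{polynomial} $g$ with $\sup_{\cbi}\|g\|<1$ that approximates $\rho f(r\,\cdot)$ uniformly on $\cbi$. Diagonalizing gives matrix polynomials $g_n$, each strictly contractive on $\cbi$, with $g_n\to f$ uniformly on compact subsets of $\D^2$. If each $g_n$ admits an Agler decomposition \eqref{eq:agdec} with kernels $K_1^{(n)},K_2^{(n)}$, then on the diagonal
\[
(1-|z_1|^2)K_1^{(n)}(z,z)+(1-|z_2|^2)K_2^{(n)}(z,z)=I_M-g_n(z)g_n(z)^*\preceq I_M
\]
with both summands positive semidefinite, so $\|K_j^{(n)}(z,z)\|\le(1-|z_j|^2)^{-1}$; by the Cauchy--Schwarz inequality for positive kernels the families $\{K_j^{(n)}\}_n$ are bounded uniformly on compact subsets of $\D^2\times\D^2$. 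A normal families / weak-$*$ compactness argument then yields a subsequence with $K_j^{(n)}\to K_j$ pointwise, $K_j$ positive semidefinite, and passing to the limit in \eqref{eq:agdec} gives an Agler decomposition of $f$. So it suffices to treat a matrix polynomial (or rational function with no poles on $\cbi$) that is strictly contractive on $\cbi$.

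\textbf{Stage 3: the strictly contractive rational case --- the main obstacle.} Writing $g=q/p$ with $p$ a scalar polynomial nonvanishing on $\cbi$, strict contractivity of $g$ on $\cbi$ says exactly that the Hermitian matrix trigonometric polynomial $|p(z)|^2I_M-q(z)q(z)^*$ is positive definite on $\T^2$ (and likewise $|p(z)|^2I_N-q(z)^*q(z)$). This is precisely the input the Fej\'er--Riesz machinery at the heart of Kummert's approach is built to process: the hypothesis $S^*S=I$ on $\T^2$ in Theorem~\ref{mainthm} enters only through the weaker fact that the associated Hermitian polynomial is positive semidefinite there, so the same construction --- a one-variable matrix Fej\'er--Riesz factorization carried out in one of the two variables, with the polynomial (hence analytic) dependence on the other variable tracked through, splitting $|p|^2I-qq^*$ into the two $(1-|z_j|^2)$-weighted halves --- applies here and, after dividing by $|p|^2$, delivers rational positive semidefinite kernels $K_1,K_2$ on $\D^2$ satisfying \eqref{eq:agdec} for $g$. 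I expect this to be the crux of the whole argument: organizing the two-variable positivity data so that a one-variable matrix Fej\'er--Riesz factorization applies and the two weighted halves emerge with the correct analytic dependence is exactly Kummert's insight, and carrying it out transparently --- the stated purpose of this paper's treatment of Theorem~\ref{mainthm}, together with the ``slight modification'' to the rectangular and merely contractive setting --- is where the real work lies; once it is in place, Stages~1 and~2 complete the proof.
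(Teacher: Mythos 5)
Stages 1 and 2 of your plan are sound and essentially coincide with the paper's route (the lurking isometry argument, the reduction via a direct-sum splitting and Rudin-type polynomial approximation, and the Cole--Wermer normal families limit). The genuine gap is the central claim of Stage 3: it is not true that the hypothesis $S^*S=I_N$ on $\T^2$ in Theorem \ref{mainthm} ``enters only through'' positive semidefiniteness of $|p|^2I-Q^*Q$ on the torus. The very first step of Kummert's construction, equation \eqref{paraSC}, requires that for each fixed $z_2=w_2\in\T$ the expression $\overline{p(w)}p(z)I-Q(w)^*Q(z)$ be \emph{divisible} by $1-\bar w_1z_1$, so that the quotient is a Laurent-polynomial matrix $T(z_2)$ with polynomial entries in $z_1,\bar w_1$ to which the one-variable matrix Fej\'er--Riesz theorem can be applied (this divisibility is exactly what the proof of Theorem \ref{onevarthmdetailed} extracts from $\overline{p(1/\bar z)}p(z)I=Q(1/\bar z)^*Q(z)$, i.e.\ from being isometric on the circle). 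For a strictly contractive rational $g=q/p$ the numerator $|p|^2I-q^*q$ does not vanish on the diagonal of the torus, the division leaves a genuine pole along $1-\bar w_1z_1=0$, and the parametrized Fej\'er--Riesz step never gets off the ground; nor can you simply divide the leftover positive term by $1-\bar w_1z_1$, since $I-g(w)^*g(z)$ is not itself a positive semidefinite kernel in two variables.

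The paper closes exactly this gap by a completion step that your proposal omits: for a matrix polynomial $P$ with $\|P\|_{\D^2}<1$, one factors $I-P^*P=A^*A$ on $\T^2$ using Dritschel's strictly positive \emph{two-variable} matrix Fej\'er--Riesz theorem (Theorem \ref{dritschel1}), forms the iso-inner function $\begin{pmatrix}P\\ A\end{pmatrix}$, applies Theorem \ref{mainthm} to that function, and then compresses back to $P$ (Proposition \ref{partthm}), absorbing the resulting constant kernel $k_0$ into the $z_1$-term via the Schur product theorem. So the ``slight modification'' you hoped for is not a relaxation of the iso-inner hypothesis inside Kummert's construction; it is this external completion, which requires a genuinely two-variable factorization input. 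With that ingredient inserted, your Stages 1 and 2 do finish the proof as you describe (the only remaining cosmetic point being that your decomposition is the coisometric version $I-f(z)f(w)^*$, which is obtained from the paper's $I-f(w)^*f(z)$ version by the reflection $S\mapsto S(\bar z)^*$ as in Proposition \ref{breve}).
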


Perhaps, the most important application
of this theorem is a Pick interpolation
theorem for holomorphic functions
on the bidisk.
For this and other applications 
we refer the reader to 
the book \cite{AMbook} and the papers
\cites{AMcrelle, AMYmonotone, AMYcara, BT98}.

Dritschel has recently proven a strong Fej\'er-Riesz
type of result in two variables (Theorem \ref{dritschel2}) 
which makes it possible to 
prove that every two-variable rational function 
bounded by one in norm on $\D^2$
(with no assumptions 
on boundary behavior) 
has a \emph{finite contractive} 
TFR.  

\begin{theorem}\label{finitetfr}
Let $S: \D^2\to \C^{M\times N}$ be
rational with no poles in $\D^2$ and assume 
$\|S(z)\|\leq 1$ for all $z\in \D^2$.
Then, there exists a contractive matrix
$T = \begin{pmatrix} A & B \\ C & D \end{pmatrix}$
such that
\[
S(z) = A + B \Delta(z) (I-D \Delta(z))^{-1} C
\]
where $\Delta(z_1,z_2) = z_1 P_1 + z_2 P_2$, $P_1,P_2$ are
orthogonal projections with $P_1+P_2 = I$.  
\end{theorem}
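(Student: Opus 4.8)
The plan is to reduce Theorem~\ref{finitetfr} to the iso-inner case already handled by Theorem~\ref{mainthm}. The point is that a finite \emph{contractive} TFR for $S$ is, up to compression, the same object as a finite \emph{isometric} TFR for a larger iso-inner function having $S$ as a corner. Concretely, suppose we can produce a rational iso-inner function of column-extension form $\hat S = \begin{pmatrix} S \\ R \end{pmatrix}:\D^2\to\C^{(M+K)\times N}$, with $R$ rational and pole-free on $\D^2$. Applying Theorem~\ref{mainthm} to $\hat S$ yields an isometry $\hat U = \begin{pmatrix} \hat A & \hat B \\ \hat C & \hat D\end{pmatrix}$ and projections $P_1, P_2$ with $P_1 + P_2 = I$ realizing $\hat S$; deleting from $\hat U$ the $K$ output rows belonging to $R$ produces $U = \begin{pmatrix} A & B \\ C & D \end{pmatrix}$ realizing $S$ with the \emph{same} $\Delta(z) = z_1 P_1 + z_2 P_2$, and a submatrix of an isometry obtained by deleting rows is automatically a contraction. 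So the whole theorem comes down to constructing $\hat S$.

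The extension $\begin{pmatrix} S \\ R\end{pmatrix}$ is iso-inner precisely when $S(z)^*S(z) + R(z)^*R(z) = I_N$ on $\T^2$, that is, when the positive matrix function $I_N - S^*S$ can be written as a hermitian square $R^*R$ --- or, just as usefully, as a finite sum $\sum_k R_k^* R_k$, which one then stacks into a single $R$ --- with each factor rational and holomorphic on $\D^2$. Writing $S = P/q$ with $q$ a scalar polynomial having no zeros in $\D^2$, a short argument (were $P$ not to vanish at a zero of $q$ on $\T^2$, then $S$ would blow up approaching that point, contradicting $\|S(z)\|\le 1$ on $\D^2$) shows that $|q(z)|^2 I_N - P(z)^*P(z)$ is an honest matrix trigonometric polynomial, positive semidefinite on all of $\T^2$; we want to factor it as a sum of hermitian squares of \emph{analytic} polynomials. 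In one variable this is the Fej\'er--Riesz lemma. In two variables the analogous statement is \emph{false} for arbitrary positive trigonometric polynomials, but it becomes true once one also knows that $|q|^2 I_N - P^*P$ is the defect of a function contractive on all of $\D^2$, not merely on $\T^2$; supplying this factorization is exactly the role of Dritschel's theorem, Theorem~\ref{dritschel2}.

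The argument would then proceed as follows. Apply Theorem~\ref{dritschel2} to the matrix trigonometric polynomial $|q(z)|^2 I_N - P(z)^*P(z)$ to write it as $\sum_{k=1}^m R_k^* R_k$ with the $R_k$ analytic matrix polynomials (multiplying each $R_k$ by a monomial if necessary to clear negative powers, which is harmless on $\T^2$); stack the $R_k/q$ into a single rational, pole-free $R$; form $\hat S = \begin{pmatrix} S \\ R\end{pmatrix}$; realize it via Theorem~\ref{mainthm}; and compress the realization as in the first paragraph. (Should the hypotheses of Theorem~\ref{dritschel2} demand more regularity near $\T^2$ than $S$ has, one first reduces to $S(tz)$ with $t\uparrow 1$ and passes to a limit of the resulting realizations, controlling their size by a degree bound on $q$.) When $S$ is already inner the factor is $R = 0$ and the construction collapses to a direct application of Theorem~\ref{mainthm}, so the result genuinely subsumes the square iso-inner case.

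The main obstacle is the factorization step: one must verify that $|q|^2 I_N - P^* P$ meets the hypotheses of Theorem~\ref{dritschel2} --- positivity on $\T^2$ is routine, but the bidisk-level positivity that Dritschel also requires must be drawn from $\|S(z)\|\le 1$ on $\D^2$ together with rationality --- and one must then read off from Dritschel's conclusion a factor $R$ that is holomorphic on all of $\D^2$. A lesser nuisance is making the $t\uparrow 1$ limiting argument precise, which hinges on a uniform bound on the size of the realizations. Everything downstream of the factorization --- assembling $\hat S$, invoking Theorem~\ref{mainthm}, and compressing the realization --- is routine.
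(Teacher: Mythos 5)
Your proposal is correct and is essentially the paper's proof, which simply reruns the one-variable argument of Proposition \ref{onevarfinitetfr} with Theorem \ref{dritschel2} in place of the matrix Fej\'er--Riesz theorem: write $S=Q/p$, factor $|p|^2I-Q^*Q = A^*A$ on $\T^2$ by Dritschel, stack to get the iso-inner extension $\begin{pmatrix} S \\ A/p\end{pmatrix}$, realize it by Theorem \ref{mainthm}, and compress as in Proposition \ref{partthm}. Your stated ``main obstacle'' is vacuous: Theorem \ref{dritschel2} as stated requires only positive semidefiniteness on $\T^2$ (that is its depth), so no bidisk-level positivity certificate and no $t\uparrow 1$ limiting argument are needed.
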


A very important bonus of 
Kummert's approach is that it constructs the matrix
$U$ in Theorem \ref{mainthm}
 with the minimal possible dimensions in
a strong way.  
For a rational iso-inner function $S:\D^2 \to \C^{M\times N}$ 
we can always make sense of $z_1 \mapsto S(z_1,z_2)$
for each fixed $z_2\in \T$ and this is a one variable
rational iso-inner function (Lemma \ref{Hurwitzlemma}).  
If we have a formula as in Theorem \ref{mainthm} where
the ranks of $P_1,P_2$ are $r_1,r_2$ then we can construct
a transfer function realization for $S(\cdot,z_2)$ with size $r_1$ and a transfer
function realization for
$S(z_1,\cdot)$ with size $r_2$.  In the square case $M=N$, this
can be done optimally.

\begin{theorem}[Kummert's minimality theorem] \label{minthm} 
Suppose $S:\D^2\to \C^{N\times N}$ is rational and inner.
Then, one can choose $U$ in Theorem \ref{mainthm} so 
that the ranks $r_1,r_2$ of $P_1, P_2$ are
simultaneously minimal:
$r_1$ is the maximum of the minimal size of 
a unitary TFR for $z_1 \mapsto S(z_1,z_2)$ where $z_2$ varies over $\T$
and $r_2$ is the maximum of the minimal size of a
unitary TFR for $z_2 \mapsto S(z_1,z_2)$ where $z_1$ varies over $\T$.
\end{theorem}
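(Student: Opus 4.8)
The plan is to split the statement into an easy lower bound valid for \emph{every} realization of the form in Theorem~\ref{mainthm}, and a construction---Kummert's---which meets it. For $\zeta\in\T$ put $\phi_\zeta(z_1):=S(z_1,\zeta)$ and $\psi_\zeta(z_2):=S(\zeta,z_2)$; by Lemma~\ref{Hurwitzlemma} these are one-variable square rational inner functions, and the minimal size of a unitary TFR of $\phi_\zeta$ equals its McMillan degree $d_1(\zeta)=\dim\mcH(\phi_\zeta)$, the dimension of its de~Branges--Rovnyak space, which is also the rank of the reproducing kernel $\tfrac{I-\phi_\zeta(w_1)^{*}\phi_\zeta(z_1)}{1-\bar w_1z_1}$ (classical one-variable realization/model theory); similarly $d_2(\zeta)$ for $\psi_\zeta$. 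Set $m_1:=\max_{\zeta\in\T}d_1(\zeta)$ and $m_2:=\max_{\zeta\in\T}d_2(\zeta)$; these are the $r_1,r_2$ of the theorem. For the lower bound, take any realization $S(z)=A+B\Delta(z)(I-D\Delta(z))^{-1}C$ with $U=\begin{pmatrix}A&B\\C&D\end{pmatrix}$ unitary and $\operatorname{rank}P_i=r_i$, fix $\zeta\in\T$, and set $z_2=\zeta$: on $\operatorname{ran}P_2$ the operator $\Delta(\cdot,\zeta)$ is multiplication by the unimodular scalar $\zeta$, so that portion of the state can be removed by a Schur-complement (feedback) step; because $\zeta$ is unimodular this carries $U$ to a unitary linear-fractional image of itself and leaves a one-variable unitary TFR of $\phi_\zeta$ with state space $\operatorname{ran}P_1$, hence of size $r_1$. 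Therefore $r_1\ge d_1(\zeta)$ for all $\zeta$, so $r_1\ge m_1$, and symmetrically $r_2\ge m_2$.

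It remains to exhibit one realization with $r_1=m_1$ and $r_2=m_2$, and here I would follow Kummert. Writing $S=P/q$ with $q$ of bidegree $(n_1,n_2)$, a standard consequence of the inner condition is that $q(w_1,z_2)^{*}q(z_1,z_2)I-P(w_1,z_2)^{*}P(z_1,z_2)$ is divisible by $1-\bar w_1z_1$ whenever $z_2\in\T$; dividing produces a positive semidefinite polynomial kernel $\Psi_{z_2}(z_1,w_1)$ of degree $\le n_1-1$ in $z_1$ and in $\bar w_1$, parametrized by $z_2\in\T$. Kummert performs a one-variable matrix Fej\'er--Riesz factorization $\Psi_{z_2}(z_1,w_1)=\Gamma_1(w_1,z_2)^{*}\Gamma_1(z_1,z_2)$, and this $\Gamma_1$ is the $z_1$-block of the realization, so $\operatorname{rank}P_1$ equals the number of rows of $\Gamma_1$. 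At a parameter value $\zeta$, $\Psi_\zeta(z_1,w_1)$ is the nowhere-vanishing scalar rank-one positive kernel $\overline{q(w_1,\zeta)}\,q(z_1,\zeta)$ times the de~Branges--Rovnyak kernel of $\phi_\zeta$, hence has rank $d_1(\zeta)$; so the minimal-row form of the matrix Fej\'er--Riesz lemma yields $\Gamma_1$ with exactly $\max_\zeta d_1(\zeta)=m_1$ rows, giving $\operatorname{rank}P_1=m_1$ by the lower bound. What makes this more than a slice-by-slice statement is that the factorization must return $\Gamma_1$ as a genuine polynomial in $(z_1,z_2)$; I would verify this directly from the Cramer/determinantal form of the one-variable factorization, as Kummert does, rather than by patching the slice factorizations.

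The real difficulty---and the step I expect to be the main obstacle---is the \emph{simultaneous} minimality: once $\Gamma_1$ is chosen, the second kernel is essentially forced, $\Gamma_2(w)^{*}\Gamma_2(z)=\bigl[(I-S(w)^{*}S(z))-(1-\bar w_1z_1)\Gamma_1(w)^{*}\Gamma_1(z)\bigr]/(1-\bar w_2z_2)$, and one must show that this complementary kernel has rank at most $m_2$ (the reverse inequality being automatic from the slicing argument of the first paragraph applied now in the variable $z_1$: restricting $\Gamma_2^{*}\Gamma_2$ to $z_1=\xi\in\T$ recovers the de~Branges--Rovnyak kernel of $\psi_\xi$, of rank $d_2(\xi)$). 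The mechanism to exploit is that a Fej\'er--Riesz factor that is minimal in $z_1$ strips from the kernel all $z_1$-complexity beyond what the slices already demand, so that the rank of the remaining complementary kernel is governed entirely by its $z_2$-behaviour, i.e.\ by $\max_\xi d_2(\xi)=m_2$; making this precise requires careful bookkeeping of degrees and ranks through Kummert's construction (or, alternatively, running the construction with the roles of $z_1$ and $z_2$ exchanged and reconciling the two outputs). Feeding an Agler decomposition with $(\operatorname{rank}(\Gamma_1^{*}\Gamma_1),\operatorname{rank}(\Gamma_2^{*}\Gamma_2))=(m_1,m_2)$ into the lurking-isometry step in the proof of Theorem~\ref{mainthm}, whose state space is by construction $\C^{m_1}\oplus\C^{m_2}$, then produces the desired $U$.
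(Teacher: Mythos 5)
Your lower bound (fixing $z_2=\zeta\in\T$ and absorbing the unimodular $P_2$-block into the unitary to get a one-variable unitary TFR of size $r_1$ for the slice) and your identification of $r_1$ with the maximal slice degree $m_1$ via the parametrized kernel and the one-variable matrix Fej\'er--Riesz factorization are both sound and match the paper's setup. The genuine gap is exactly the step you flag as the main obstacle and then leave unproved: you never show that the forced complementary kernel $\Gamma_2(w)^*\Gamma_2(z)=H(w)^*H(z)$ has rank at most $m_2$. The claim that a $z_1$-minimal Fej\'er--Riesz factor ``strips all $z_1$-complexity'' so that the leftover kernel is ``governed entirely by its $z_2$-behaviour'' is a hope, not an argument, and your alternative---running the construction with the variables exchanged and ``reconciling the two outputs''---is precisely the hard point: nothing in the proposal shows that two separately minimal decompositions can be merged into a single simultaneously minimal one. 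That this is not routine bookkeeping is underlined by the paper itself, which poses the analogous statement for non-square iso-inner functions as an open question; the proof genuinely uses $M=N$.

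The paper closes this gap with two further ingredients. First, Proposition \ref{domination}: using uniqueness in the matrix Fej\'er--Riesz factorization (Theorem \ref{genFR-uniqueness}, exploiting that the factor $A$ has an analytic right inverse), the constructed dominant $z_1$-term $G^*G$ dominates the $z_1$-term of \emph{any} polynomial Agler decomposition of $S$, and correspondingly the sub-dominant $z_2$-term $H^*H$ is dominated by any admissible $z_2$-term. Second, Proposition \ref{reverseprop}: since $S$ is square and inner, $Q(1/z)\breve{Q}(z)=p(1/z)\breve{p}(z)I$, so every decomposition reflects to one for $\breve{S}(z)=S(\bar z)^*$ (with Theorem \ref{polydecomps} guaranteeing the reflected terms are polynomials), and the reflection reverses domination; hence $H^*H$ becomes the \emph{dominant} $z_2$-term of $\breve{S}$, whose rank is the generic $z_2$-slice rank, equal to $m_2$ by the adjunction formula (Proposition \ref{breve}). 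Some version of this domination-plus-reflection mechanism, or another argument exploiting squareness, must be supplied before your outline is a proof.
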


In particular, among all possible unitary TFR's for $S$, neither $r_1$ nor $r_2$ can
be smaller than those in Kummert's construction.
We will give a conceptual
proof of Kummert's minimality theorem, and
clarify why this is the best
possible result.
Before the mathematical community knew of
Kummert's results, this result was reproven in
the scalar case using the framework of Geronimo-Woerdeman \cite{GW04}
in \cite{GKAPDE}.  
Later, Theorem \ref{minthm} was also proven using Hilbert space methods
in \cite{BickelKnese}.
The scalar minimality theorem was crucial in giving a characterization
of two-variable rational matrix-monotone functions in \cite{AMYmonotone}.
It is also useful in proving determinantal representations for
certain families of polynomials $p\in \C[z_1,z_2]$ with no zeros in $\D^2$ \cite{GKdv}.

We shall present a new application of the minimality
theorem which has some relevance to the applications
of this theory to wavelets in \cites{wavelet, CCCP}.
In these papers matrix-valued \emph{polynomial} inner
functions are of particular interest.  
\begin{theorem} \label{thmpolyinner}
Let $S \in \C^{N\times N}[z_1,z_2]$ 
and assume $S^*S = I_N$ on $\T^2$.
Then,
$U$ in Theorem \ref{mainthm} can be chosen
with $\det(I- D\Delta(z)) \equiv 1$.  
\end{theorem}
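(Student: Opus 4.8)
The plan is to feed Kummert's construction the minimal data and then show the transfer denominator is forced to be trivial. First note that $S$ is rational inner: being a polynomial it has no poles in $\D^2$, and by the maximum principle $\|S(z)\|\le 1$ there since $\|S\|=1$ on $\T^2$. So Theorem~\ref{minthm} applies; choose $U=\begin{pmatrix}A&B\\C&D\end{pmatrix}$ (unitary) as there, with $r_1=\mathrm{rank}\,P_1$ and $r_2=\mathrm{rank}\,P_2$ simultaneously minimal, and put $q(z):=\det(I-D\Delta(z))$. This is the quantity we must show is $\equiv 1$; equivalently --- using $\Delta(tz)=t\Delta(z)$, so that $t\mapsto\det(I-tD\Delta(z))$ is the constant $1$ --- that $D\Delta(z)$ is nilpotent for every $z$ and $(I-D\Delta(z))^{-1}$ is a (matrix) polynomial in $\Delta(z)$, which is the ``special nilpotent linear combination'' promised in the abstract. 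Since $D$ is a contraction and $\|\Delta(z)\|=\max(|z_1|,|z_2|)$ (the $P_i$ being complementary orthogonal projections), we get $\|D\Delta(z)\|<1$ on $\D^2$, so $q$ is a polynomial with $q(0)=1$ and no zeros in $\D^2$. It therefore suffices to prove $\deg_{z_1}q=\deg_{z_2}q=0$.

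Next I would exploit the one-variable slices. Fix $w\in\T$. By Lemma~\ref{Hurwitzlemma} the function $z_1\mapsto S(z_1,w)$ is a one-variable rational iso-inner function, and it is in fact a matrix polynomial in $z_1$ because $S$ is a polynomial; so it is a one-variable polynomial inner function. A one-variable polynomial inner function has a minimal unitary transfer function realization whose state matrix is nilpotent, equivalently whose transfer denominator $\det(I-z_1D')$ is identically $1$: this is classical one-variable realization theory, the point being that the finite poles of the transfer function are precisely the reciprocals of the nonzero eigenvalues of the minimal state matrix, and a polynomial has no finite poles. On the other hand, restricting the bidisk realization \eqref{STFR} to $z_2=w$ and compressing the rank-$r_1$ channel through which $z_1$ enters (namely $z_1P_1$) produces a size-$r_1$ transfer function realization of $S(\cdot,w)$; a short Schur-complement computation, valid whenever $q(0,w)=\det(I-wDP_2)\neq 0$, identifies its transfer denominator as
\begin{equation*}
\det\!\left(I-wDP_2-z_1DP_1\right)\big/\det\!\left(I-wDP_2\right)=q(z_1,w)/q(0,w).
\end{equation*}

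Finally I would combine these via a genericity argument. The polynomial $z_2\mapsto q(0,z_2)=\det(I-z_2DP_2)$ equals $1$ at $z_2=0$, so it vanishes at only finitely many points of $\T$; and the minimal transfer realization size of $S(\cdot,z_2)$ equals its maximum value $r_1$ for all but finitely many $z_2\in\T$ (the size can drop only on a proper algebraic subset of the $z_2$-plane). For any $w\in\T$ avoiding both finite exceptional sets, the size-$r_1$ realization of $S(\cdot,w)$ from the previous step is minimal, hence has transfer denominator $\equiv 1$; therefore $q(z_1,w)=q(0,w)$, independent of $z_1$. Since this holds for infinitely many $w$, every coefficient of $z_1^k$ with $k\geq1$ in $q$ --- a polynomial in $z_2$ --- vanishes identically, so $\deg_{z_1}q=0$; by the symmetric argument $\deg_{z_2}q=0$, whence $q$ is the constant $q(0)=1$. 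The step I expect to be the main obstacle is making the genericity precise, i.e.\ verifying that the McMillan degree of $S(\cdot,z_2)$ attains its maximum $r_1$ off a finite subset of $\T$, together with the bookkeeping in the Schur-complement reduction that pins down the slice denominator as $q(z_1,w)/q(0,w)$; everything else is either one-variable folklore or already provided by Theorems~\ref{mainthm} and \ref{minthm}.
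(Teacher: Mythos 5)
Your proposal is correct in outline and rests on the same three pillars as the paper's own proof --- simultaneous minimality from Theorem~\ref{minthm}, nilpotency of the state matrix in minimal one-variable realizations of polynomial iso-inner functions (Proposition~\ref{polyprop}), and a determinant-degree genericity argument finished off by a polynomial identity --- but it slices differently. You fix $z_2=w\in\T$, close the $z_2$-loop by a Schur complement step to get a size-$r_1$ realization of $S(\cdot,w)$ whose denominator is $q(z_1,w)/q(0,w)$, and argue that this realization is generically minimal; the paper instead restricts to the diagonal slices $z\mapsto S(z,\zeta z)$, where no compression is needed at all: the restricted function has a unitary TFR of full size $r_1+r_2$ built from $U\,\mathrm{diag}(I,P_1+\zeta P_2)$, Proposition~\ref{degdet} together with Kummert's formula $r_j=\deg_j\det S$ shows this is minimal for all but finitely many $\zeta\in\T$, so Proposition~\ref{polyprop} gives $D(P_1+\zeta P_2)$ nilpotent generically, and a polynomial identity plus homogeneity of $D\Delta(z)$ finishes. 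Your route attacks $\deg_{z_1}q$ and $\deg_{z_2}q$ directly, but it carries two pieces of bookkeeping the diagonal trick avoids, both of which you flag and both of which are fillable from the paper: (i) you need to know that your particular compressed size-$r_1$ realization has trivial denominator, not merely that some minimal realization does --- either check that closing the loop of a unitary colligation at a unimodular frequency $w$ (possible when $\det(I-wDP_2)\neq 0$) again yields a unitary colligation, so Proposition~\ref{polyprop} applies to it, or use Proposition~\ref{spanprop} to identify the minimal isometric TFR size with the McMillan degree, so that any realization of size $r_1$ is controllable and observable and its poles are exactly the reciprocals of the nonzero eigenvalues of its state matrix; (ii) the genericity that the minimal TFR size of $S(\cdot,w)$ equals $r_1$ off a finite subset of $\T$ is exactly what Proposition~\ref{degdet} supplies, since that minimal size is $\deg\det S(\cdot,w)$, which falls below $\deg_{z_1}\det S=r_1$ only at the finitely many $w$ where the leading $z_1$-coefficient of $\det S$, a polynomial in $z_2$, vanishes. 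With those two points made explicit your argument is complete; the paper's diagonal-slice version buys a shorter proof precisely because the restricted realization is already unitary and of full state dimension, so neither the Schur-complement identity nor the unitarity of the loop closure ever has to be discussed.
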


Note this means $D\Delta(z) = z_1DP_1+z_2DP_2$ is nilpotent
for every $z$.

\subsection{Guide to the reader}
This paper is structured so that it can hopefully
be read by a broad audience.
We make no mention of systems theory terminology
(except for ``transfer function'') and we make no 
use of von Neumann inequalities and related operator
theory originally used in the proof of Agler's theorem.
(We do discuss some of this for context in Section \ref{sec:AD}.)  
Our first goal is to quickly and simply prove Kummert's 
Theorem \ref{mainthm} and explain how
this proves Agler's theorem.  
Some readers may be satisfied with this quick and
mostly constructive approach to these results
and can stop after Section \ref{sec:AD}.
After that we introduce the technicalities necessary
to prove Kummert's minimality theorem and 
give an application to inner polynomials.
We include an appendix with extra background.

\subsection{Acknowledgments}
This article overlaps with the interesting article of J. Ball \cite{Ball} 
in some ways:  both survey Agler decompositions
on the bidisk/polydisk but Ball's article follows Kummert's original argument closely.
Ball's paper also discusses connections to the
engineering literature and several other classes
holomorphic functions.
The present article and author owe a great debt to Professor Ball 
for disseminating Kummert's argument to the mathematical
community.  

This article was motivated by 
the workshop 
``Mathematical Challenges of Structured
Function Systems'' 
at the Erwin Schr\"odinger Institute.
I thank ESI as well as the workshop
organizers (M. Charina, K. Gr\"ochenig,
M. Putinar, and J. St\"ockler).
The article \cite{wavelet} was helpful 
in preparing this paper.
I thank M. Dritschel for reading an early
draft of this paper.
I also thank K. Bickel
for suggesting to me to write this paper.
Finally, I sincerely thank the referee for several suggestions which
greatly improved this paper.

\tableofcontents

\section{Finite-dimensional transfer function realizations}\label{sec:equiv}

One of the fundamental things that Agler did in his original
proof of Theorem \ref{Aglerthm} 
was connect TFRs to certain formulas now called \emph{Agler decompositions}
which involved positive
semi-definite kernels.
The following theorem establishes
some basic equivalences about \emph{finite} TFRs and
\emph{finite}-dimensional Agler decompositions
which hold not just on $\D^2$ but
any polydisk $\D^d$.  Note that  ``matrix'' below always refers
to a finite matrix.  

\begin{theorem}[Equivalences Theorem] \label{equivalences}
Let $S:\D^d\to \C^{M\times N}$ be a function.

The following are equivalent:
\begin{enumerate}
\item There exists a contractive matrix $T = \begin{pmatrix} A & B \\ C & D \end{pmatrix}$ such that
\[
S(z) = A + B\Delta(z) (I-D \Delta(z))^{-1} C
\]
where $\Delta(z) = \sum_j z_j P_j$, 
for some pairwise orthogonal projections with 
$\sum_j P_j = I$.
\item There exist matrix functions $F_j$ and a constant contractive matrix $T$ such that
\[
T \begin{pmatrix} I \\ z_1 F_1(z) \\ \vdots \\ z_d F_d(z) \end{pmatrix} = \begin{pmatrix} S(z) \\ F_1(z) \\ \vdots \\ F_d(z) \end{pmatrix}.
\]
\item
There exist matrix functions $F_1,\dots, F_d, G$ such that
\[
I-S(w)^* S(z) = G(w)^*G(z) + \sum_j (1-\bar{w}_j z_j) F_j(w)^* F_j(z).
\]
\end{enumerate}

We also have the following bonuses:
\begin{description}
\item[B1] Assuming (1)-(3), $S$, $F_1,\dots, F_d, G$ are all rational 
and $\|S(z)\| \leq 1$ for all $z\in \D^d$.
If we assume at the outset that $S$ is holomorphic,
then item (3) need only hold initially on an open set
in order for it to hold globally.
\item[B2] The $T$ that works in (1) also works in (2).

\item[B3] We also get equivalences if we replace ``contractive'' 
in (1) and (2) with ``isometric'' and $G$ with $0$ in (3).
In this case, $S$ is iso-inner and analytic outside the zeros of $\det(I-D\Delta(z))$.

\end{description}

\end{theorem}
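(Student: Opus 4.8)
The plan is to run the cycle $(1)\Rightarrow(2)\Rightarrow(3)\Rightarrow(1)$; the implications $(2)\Rightarrow(3)$ and $(3)\Rightarrow(1)$ together form a ``lurking isometry'' argument, while $(1)\Rightarrow(2)$ is bookkeeping with the resolvent. First I fix notation. Given matrix functions $F_1,\dots,F_d$, with $F_j$ valued in $\C^{k_j\times N}$, set $\mcH_j=\C^{k_j}$, $\mcH=\mcH_1\oplus\cdots\oplus\mcH_d$, let $P_j$ be the orthogonal projection of $\mcH$ onto $\mcH_j$, and put $\Delta(z)=\sum_j z_jP_j$, so $\|\Delta(z)\|\le\max_j|z_j|<1$ on $\D^d$. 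Write
\[
u(z)=\begin{pmatrix} I\\ \Delta(z)F(z)\end{pmatrix},\qquad v(z)=\begin{pmatrix} S(z)\\ F(z)\end{pmatrix},\qquad F(z)=F_1(z)\oplus\cdots\oplus F_d(z),
\]
viewed as maps $\C^N\to\C^N\oplus\mcH$ and $\C^N\to\C^M\oplus\mcH$; then (2) says exactly that $Tu(z)=v(z)$ for a contraction $T$.

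\emph{$(1)\Rightarrow(2)$ and B2.} Given the realization in (1), $\|D\Delta(z)\|\le\|D\|\max_j|z_j|<1$ on $\D^d$, so $F(z):=(I-D\Delta(z))^{-1}C$ is well defined; set $F_j(z)=P_jF(z)$. Then $F(z)=C+D\Delta(z)F(z)$ and $S(z)=A+B\Delta(z)F(z)$, which is precisely $Tu(z)=v(z)$ once $T=\left(\begin{smallmatrix}A&B\\ C&D\end{smallmatrix}\right)$ is rewritten in the finer block form indexed by $\C^N$ and the summands $\mcH_1,\dots,\mcH_d$. Since this is a permutation of matrix entries, $T$ is still a contractive matrix and works in (2) as well as (1); this is B2. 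Moreover $F_j(z)=P_j(I-D\Delta(z))^{-1}C$ is rational.

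\emph{$(2)\Rightarrow(3)$ and $(3)\Rightarrow(1)$.} From $v=Tu$ with $\|T\|\le1$,
\[
u(w)^*u(z)-v(w)^*v(z)=u(w)^*(I-T^*T)u(z)=G(w)^*G(z),\qquad G(z):=(I-T^*T)^{1/2}u(z),
\]
and since $u(w)^*u(z)=I+\sum_j\bar w_jz_jF_j(w)^*F_j(z)$ and $v(w)^*v(z)=S(w)^*S(z)+\sum_jF_j(w)^*F_j(z)$, rearranging gives (3). Conversely, reading (3) as a Gram identity and summing $x_\ell^*(\cdot)x_k$ over a finite family $\{(z_k,x_k)\}$ yields
\[
\Big\|\sum_k u(z_k)x_k\Big\|^2-\Big\|\sum_k v(z_k)x_k\Big\|^2=\Big\|\sum_k G(z_k)x_k\Big\|^2\ge0,
\]
so $T_0:u(z)x\mapsto v(z)x$ is well defined and contractive on $\mathcal{D}:=\operatorname{span}\{u(z)x:z\in\D^d,\,x\in\C^N\}$; extending it by $0$ on $\mathcal{D}^\perp$ produces a contraction $T:\C^N\oplus\mcH\to\C^M\oplus\mcH$ with $Tu(z)=v(z)$. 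Writing $T=\left(\begin{smallmatrix}A&B\\ C&D\end{smallmatrix}\right)$ relative to these decompositions, $Tu(z)=v(z)$ reads $S(z)=A+B\Delta(z)F(z)$ and $F(z)=C+D\Delta(z)F(z)$; solving the second equation ($\|D\Delta(z)\|<1$ again) and substituting gives (1).

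\emph{Bonuses and main obstacle.} For B1, in (1) the entries of $(I-D\Delta(z))^{-1}$ are rational with polynomial denominator $\det(I-D\Delta(z))$, so $S$, each $F_j$, and $G=(I-T^*T)^{1/2}u$ are rational; putting $w=z\in\D^d$ in (3) gives $I-S(z)^*S(z)=G(z)^*G(z)+\sum_j(1-|z_j|^2)F_j(z)^*F_j(z)\ge0$, i.e.\ $\|S(z)\|\le1$. If $S$ is holomorphic on $\D^d$ and (3) holds only on a nonempty open $V\subseteq\D^d$, one runs $(3)\Rightarrow(1)$ using only $z\in V$ to obtain $S(z)=A+B\Delta(z)(I-D\Delta(z))^{-1}C$ on $V$; the right side is holomorphic on the connected set $\D^d$, so by the identity theorem (1)--(3) hold on all of $\D^d$. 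For B3, if $T$ is isometric then $T^*T=I$ and $G\equiv0$; conversely if (3) holds with $G\equiv0$, the Gram identity shows $T_0$ is isometric on $\mathcal{D}$, and putting $w=z\to\T^d$ off the poles in (3) gives $S^*S=I_N$ there, so $S$ is iso-inner and $N\le M$, which is exactly the room needed to extend $T_0$ to an isometry $T:\C^N\oplus\mcH\to\C^M\oplus\mcH$; then (1) holds with $T$ isometric and $S$ is analytic off the zeros of $\det(I-D\Delta(z))$. I expect the main obstacle to be the reconstruction $(3)\Rightarrow(1)$: one must recognize (3) as the positivity condition making $T_0$ well defined and contractive, and in the isometric case carry out the partial-isometry extension, where the inequality $N\le M$ forced by $G\equiv0$ guarantees enough room.
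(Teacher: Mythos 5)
Your proofs of the main equivalences and of B1, B2 are correct and essentially the paper's argument: $(1)\Leftrightarrow(2)$ by defining $F=(I-D\Delta)^{-1}C$ and reading the block equations, $(2)\Rightarrow(3)$ via the defect operator $(I-T^*T)^{1/2}$, and the reconstruction from (3) by a lurking-isometry-type argument. Your only packaging difference is in the reconstruction: you define the contraction $T_0\colon u(z)x\mapsto v(z)x$ directly on the span and extend it by zero (a ``lurking contraction''), whereas the paper first extends the isometry $u(z)x\mapsto (S(z),F(z),G(z))^t x$ to an isometric matrix and then compresses away the $G$-rows; these are interchangeable, and in the contractive case the paper's extension is always possible because one may pad $G$ with zero rows.

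There is, however, a genuine gap in your treatment of B3, in the step where you justify extending the isometry $T_0$ from the span to all of $\C^N\oplus\mcH$ with codomain $\C^M\oplus\mcH$. You correctly observe that this requires $N\le M$, and you propose to get it by ``putting $w=z\to\T^d$ off the poles in (3)'' to conclude $S^*S=I_N$ on the boundary. At that stage of the argument this is not available: in (3) the $F_j$ are arbitrary matrix functions on $\D^d$, with no rationality and hence no ``poles'' and no known boundary behavior. Setting $w=z$ gives only $I-S(z)^*S(z)=\sum_j(1-|z_j|^2)F_j(z)^*F_j(z)\ge 0$, and letting $z\to\T^d$ yields $S^*S=I_N$ only if one knows $(1-|z_j|^2)F_j(z)^*F_j(z)\to 0$, which fails to be automatic since the $F_j$ may blow up near $\T^d$. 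The paper avoids taking boundary values of the given $F_j$ by reversing the order: it builds the (isometric) realization first, so that the $F_j$ appearing in the resulting decomposition are rational with denominator $\det(I-D\Delta(z))$, and only then inserts $z=w\in\T^d$ to conclude that $S$ is iso-inner. (To your credit, you flagged the dimension condition $N\le M$ explicitly, which the paper's ``no compression is necessary'' passes over in silence; but to prove it one must either first obtain a realization with controlled, rational $F_j$ and then read off isometry on $\T^d$, or argue separately — e.g.\ by restricting (3) to columns of $S$ along diagonal slices, where the one-variable lurking isometry has no dimension obstruction — rather than by the boundary limit you invoke.)
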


\begin{proof}
$(2)\implies (1)$. 
It helps to define $F(z) = \begin{pmatrix} F_1(z) \\ \vdots \\ F_d(z) \end{pmatrix}$.
Let $P_j$ be the projection matrix for the block corresponding to $F_j$. 
Then, the equation in (2) can be written as
\begin{equation} \label{21}
\begin{pmatrix}  A & B \\ C & D \end{pmatrix} \begin{pmatrix} I & 0 \\ 0 & \Delta(z) \end{pmatrix} 
\begin{pmatrix} I \\ F(z) \end{pmatrix} = \begin{pmatrix} S(z) \\ F(z) \end{pmatrix}
\end{equation}
for $\Delta(z) = \sum_j z_j P_j$.
Block-by-block this says
\[
\begin{aligned}
A + B \Delta F &= S \\
C + D \Delta F &= F
\end{aligned}
\]
which yields $F = (I-D\Delta)^{-1} C$ and then $S = A+ B \Delta (I-D\Delta)^{-1} C$.

$(1)\implies (2)$.  We simply define $F = (I-D \Delta)^{-1} C$.
Then, \eqref{21} holds because
\[
C + D\Delta (I-D \Delta)^{-1} C =  (I-D\Delta)^{-1}C.
\]

$(2)\implies (3)$.  The given equation implies
\[
\begin{pmatrix} I \\ \Delta(w) F(w) \end{pmatrix}^* T^*T \begin{pmatrix} I \\ \Delta(z) F(z) \end{pmatrix} = 
\begin{pmatrix} S(w) \\ F(w) \end{pmatrix}^* \begin{pmatrix} S(z) \\ F(z) \end{pmatrix}.
\]
Let $A = \sqrt{I-T^*T}$ and $G(z) = A \begin{pmatrix} I \\ \Delta(z) F(z) \end{pmatrix}$.
Then,
\[
\begin{pmatrix} I \\ \Delta(w) F(w) \end{pmatrix}^* \begin{pmatrix} I \\ \Delta(z) F(z) \end{pmatrix} = 
\begin{pmatrix} S(w) \\ F(w) \end{pmatrix}^* \begin{pmatrix} S(z) \\ F(z) \end{pmatrix} + G(w)^*G(z)
\]
and this rearranges exactly into the equation in (3).

$(3) \implies (2)$.  This is known as a lurking isometry argument.
The map
\[
\begin{pmatrix} I \\ \Delta(z) F(z) \end{pmatrix} \mapsto \begin{pmatrix} S(z) \\ F(z) \\ G(z) \end{pmatrix}
\]
extends linearly and in a well-defined way to 
an isometric map from the span of the vectors on the left to 
the span of the vectors on the right as $z$ varies over $\D^d$.
We can extend this to an isometric matrix $V$ satisfying
\[
V \begin{pmatrix} I \\ \Delta(z) F(z) \end{pmatrix} = \begin{pmatrix} S(z) \\ F(z) \\ G(z) \end{pmatrix}
\]
which we can compress to get a 
contractive matrix satisfying the equation in (2).

The bonus results follow. For (B1), $S$ is rational and
bounded in operator norm by $1$ by (1) and (3).  
The matrix functions $F_j,G$ are rational by 
the proofs of $(2)\implies (1)$ and $(2) \implies (3)$.
If we assume $S$ is holomorphic and (3) only
holds on an open set, then all of the
proofs work on this restricted set
but automatically extend holomorphically
to $\D^d$ by the matrix formulas.  
Bonus (B2) follows from the proof of $(1) \iff (2)$.
For bonus (B3), notice that if $T$ is an isometric matrix, 
then we have $G=0$ in the proof $(2)\implies (3)$
and if we start with $G=0$ we get $T$ to be isometric in 
the proof $(3)\implies (2)$ since
no compression is necessary.
Finally, $S$ is iso-inner because we can insert $z=w\in \T^d$ into
condition (3) to see $S^*S = I$ at least
away from the zero set of $\det(I-D \Delta(z))$ which is a denominator
for the $F_j$ and $S$ by the formula in (2)$\implies$ (1).
\end{proof}

The next proposition says 
the conditions of Theorem \ref{equivalences} are also
equivalent to $S$ being a submatrix 
of a rational inner function
possessing a finite-dimensional unitary transfer function realization.
Moreover, the various sizes of the transfer function
realizations stay the same.
To be more precise, 
let $r_j$ be the rank of $P_j$ in 
condition (1) of Theorem \ref{equivalences}.
Then, $r=(r_1,\dots,r_d)$ will be called 
the \emph{size breakdown} of the TFR.
This terminology is endemic to 
this paper.
The \emph{size} of the TFR will refer to 
$|r| = r_1+\cdots + r_d$.
Note that $r_j$ also equals the number of rows
of $F_j$ in conditions (2) and (3) 
of Theorem \ref{equivalences}.

\begin{prop}\label{partthm}
Let $S:\D^d\to \C^{M\times N}$ be a function 
which has a finite contractive
TFR with size breakdown $r$.
Then, there exists $n\geq N,M$ and a matrix rational inner function $\Phi:\D^d \to \C^{n\times n}$
with finite unitary TFR
with size breakdown $r$
such that $S$ is a submatrix of $\Phi$. 

As a sort of converse, every submatrix of $S$ has a finite contractive TFR with
same size breakdown.
\end{prop}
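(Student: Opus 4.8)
The plan is to exploit the equivalences already established in Theorem \ref{equivalences}, especially the passage between a contractive TFR and an isometric one, together with the standard trick of ``unitarizing'' an isometric matrix by adding rows (or columns) without touching the projection part $\Delta(z)$. First I would take the finite contractive TFR for $S$ with size breakdown $r$, and apply the $(1)\implies(3)$ and $(3)\implies(2)$ directions of Theorem \ref{equivalences} in the isometric form (bonus B3): starting from $S$ with a contractive $T$, pass to an Agler decomposition $I - S(w)^*S(z) = G(w)^*G(z) + \sum_j (1-\bar w_j z_j) F_j(w)^*F_j(z)$, then \emph{absorb $G$}. The point is that $\begin{pmatrix} S \\ G \end{pmatrix}$ is itself iso-inner as a column: $I = \begin{pmatrix} S(w) \\ G(w)\end{pmatrix}^* \begin{pmatrix} S(z) \\ G(z)\end{pmatrix} + \sum_j(1-\bar w_jz_j)F_j(w)^*F_j(z)$ on $\T^d$, so the lurking-isometry argument $(3)\implies(2)$ now produces an \emph{isometric} $T$ (no compression needed) realizing $\widetilde S := \begin{pmatrix} S \\ G \end{pmatrix}$ with the \emph{same} family of projections $P_j$, hence the same size breakdown $r$, and $S$ is the top submatrix of $\widetilde S$.

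Next I would enlarge $\widetilde S$ on the other side to square it up. An iso-inner $\widetilde S : \D^d \to \C^{M'\times N}$ (with $M' = M + (\text{rows of }G) \geq N$) has an isometric TFR $U = \begin{pmatrix} A & B \\ C & D\end{pmatrix}$; I want to extend it to a rational \emph{inner} $\Phi : \D^d \to \C^{n\times n}$ with $n := M'$ and the same breakdown. The natural move is to complete the isometric block matrix $U$ to a \emph{unitary} block matrix $U' = \begin{pmatrix} A' & B' \\ C' & D'\end{pmatrix}$ by appending columns, chosen so that the lower-right corner is still $D$ acting with the same projections $P_1,\dots,P_d$ — i.e. only $A$ and $C$ get extra columns, $B$ and $D$ are unchanged. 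Since $U$ is already isometric, its columns are orthonormal, and one can fill out an orthonormal basis of the (unchanged) ambient space $\C^{M'}\oplus\C^r$; the added columns live in the orthogonal complement of the range of $U$, and because $D$ is untouched the formula $\Phi(z) = A' + B'\Delta(z)(I - D\Delta(z))^{-1}C'$ uses exactly the same $\Delta$, so it has size breakdown $r$. By the equivalences (applied to $\Phi$ with $U'$ unitary, hence both isometric as a map and co-isometric), $\Phi$ is rational inner $\D^d\to\C^{n\times n}$, and $\widetilde S$ — hence $S$ — sits as a submatrix (a block of columns of the top block of rows) of $\Phi$.

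For the converse: if $S$ has a finite contractive TFR, fix an isometric/contractive $V$ and projections realizing it; any submatrix $S'$ of $S$ is obtained by pre- and post-composing the ``input/output'' channels of $S$ with coordinate projections $\Pi_{\mathrm{out}}, \Pi_{\mathrm{in}}$, i.e. $S' = \Pi_{\mathrm{out}} S \Pi_{\mathrm{in}}^*$. Replacing $A \mapsto \Pi_{\mathrm{out}} A \Pi_{\mathrm{in}}^*$, $B \mapsto \Pi_{\mathrm{out}} B$, $C \mapsto C\Pi_{\mathrm{in}}^*$, $D\mapsto D$ gives a block matrix $T' = \begin{pmatrix}\Pi_{\mathrm{out}}A\Pi_{\mathrm{in}}^* & \Pi_{\mathrm{out}}B \\ C\Pi_{\mathrm{in}}^* & D\end{pmatrix}$ which is still contractive (it is $\begin{pmatrix}\Pi_{\mathrm{out}} & 0 \\ 0 & I\end{pmatrix} T \begin{pmatrix}\Pi_{\mathrm{in}}^* & 0 \\ 0 & I\end{pmatrix}$, a product of contractions), with $\Delta(z)$ and hence the breakdown unchanged, and $T'$ realizes $S'$ by the same block computation as in $(1)\iff(2)$. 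The main obstacle I anticipate is the squaring-up step: one must check that appending columns to an isometry to reach a unitary can always be arranged \emph{without altering $D$ and the projections}, which amounts to verifying that the orthogonal complement of $\operatorname{ran} U$ inside $\C^{M'}\oplus \C^r$ can be chosen to meet the ``$C$-block'' rows only — here using $M'\geq N$, i.e. that there is enough room in the output space — and that the resulting $\Phi$ is genuinely square inner rather than merely iso-inner; this is exactly where bonus B3 and the dimension bookkeeping $n\geq N,M$ must be invoked carefully.
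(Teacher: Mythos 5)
Your argument is correct, and it reaches the same conclusion by what is really a two-step construction of the dilation that the paper performs in one stroke. The paper simply invokes the fact that every finite contractive matrix $T=\begin{pmatrix} A & B\\ C& D\end{pmatrix}$ is a submatrix of a finite unitary $U$, and then permutes rows and columns so that the added rows/columns attach to the $A$-block (the input/output channels), leaving $D$, $\Delta(z)$, and hence the size breakdown untouched; the function $\Phi$ is then read off directly and $S$ sits in its upper-left corner. You instead first pass to the Agler decomposition, absorb the defect term $G$ into the function to get the iso-inner stack $\begin{pmatrix} S\\ G\end{pmatrix}$ with an isometric TFR (this is exactly the row dilation $T\mapsto\begin{pmatrix} T\\ \sqrt{I-T^*T}\end{pmatrix}$ seen through the lurking-isometry machinery), and then complete the isometric block matrix to a unitary by appending input columns spanning $(\operatorname{ran}U)^\perp$. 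Both routes hinge on the same bookkeeping — all new rows and columns are attached to the non-state channels, so $B$, $D$ and the projections $P_j$ are unchanged — and your converse argument (compressing $A$, $B$, $C$ by coordinate projections while keeping $D$) is the same observation the paper gestures at. One remark: the ``obstacle'' you flag in the squaring-up step is not actually there. The appended orthonormal columns need not avoid any rows; whatever entries they have in the output rows and state rows simply become the blocks $A_{12}$ and $C_2$ of new input columns, and since $B$ and $D$ are untouched the realization formula for $\Phi$ uses the same $\Delta(z)$, so the breakdown is automatic. The only facts needed are that $M'\geq N$ (which holds since the stacked function is isometric on $\T^d$, or simply by counting the rows of $G$) and that a square iso-inner function is inner, which follows from bonus (B3) of Theorem \ref{equivalences}.
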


\begin{proof}
Suppose $S$ has a finite contractive TFR given via
contractive $T = \begin{pmatrix} A & B \\ C & D \end{pmatrix}$.
Every contractive matrix is a submatrix of a finite unitary, say $U$.
If we rearrange rows and columns we may write 
\[
U = \begin{pmatrix} A & A_{12} & B \\
A_{21} & A_{22} & B_2 \\
C & C_2 & D \end{pmatrix}.
\]
If 
\[
\Phi(z) = \begin{pmatrix} A & A_{12} \\ A_{21} & A_{22} \end{pmatrix} +
 \begin{pmatrix} B \\ B_2\end{pmatrix}\Delta(z) (I-D \Delta(z))^{-1} \begin{pmatrix} C & C_2 \end{pmatrix}
 \]
then $S(z) = \begin{pmatrix} I & O \end{pmatrix} \Phi(z) \begin{pmatrix} I \\ O \end{pmatrix}$.

This same type of observation shows that every submatrix of $S$ has a 
finite contractive TFR.
\end{proof}

The following is referred to as the adjunction formula
in \cite{wavelet}.

\begin{prop}\label{breve}
Let $S:\D^d \to \C^{M\times N}$ be a function
with a finite contractive TFR given via a matrix $T$ 
as in (1),(2) of Theorem \ref{equivalences}.
Set $\breve{S}(z) = S(\bar{z})^*$. 
Then, $\breve{S}$ has a finite contractive
TFR given via $T^*$.

In particular, if $T$ is isometric, then $\breve{S}$
has a finite coisometric TFR.  
\end{prop}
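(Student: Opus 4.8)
The plan is to start from the realization formula
$S(z) = A + B\Delta(z)(I-D\Delta(z))^{-1}C$
and take adjoints, treating $\Delta(z)$ formally. Since $\Delta(z) = \sum_j z_j P_j$ with the $P_j$ pairwise orthogonal projections summing to the identity, when we conjugate and evaluate at $\bar z$ we have $\Delta(\bar z)^* = \sum_j z_j P_j = \Delta(z)$, because the $P_j$ are self-adjoint and $\overline{\bar z_j} = z_j$. So the $\Delta$ variable behaves well under the $\breve{\ }$ operation using the \emph{same} projections $P_1,\dots,P_d$. First I would compute
\[
\breve{S}(z) = S(\bar z)^* = A^* + C^* (I - \Delta(\bar z)^* D^*)^{-1} \Delta(\bar z)^* B^* = A^* + C^*(I-\Delta(z)D^*)^{-1}\Delta(z) B^*,
\]
using $(XY)^* = Y^* X^*$ and $(I-E)^{-*} = (I-E^*)^{-1}$ for the relevant $E$.

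Next I would massage this into the standard TFR shape. The issue is that the middle factor is $(I-\Delta D^*)^{-1}\Delta$ rather than $\Delta(I-D^*\Delta)^{-1}$; these are equal by the elementary resolvent identity $(I-\Delta D^*)^{-1}\Delta = \Delta(I-D^*\Delta)^{-1}$, which holds whenever either side is defined (multiply both sides by $(I-\Delta D^*)$ on the left and $(I-D^*\Delta)$ on the right). Applying this gives
\[
\breve{S}(z) = A^* + C^* \Delta(z)(I - D^*\Delta(z))^{-1} B^*,
\]
which is exactly the TFR associated to the block matrix
\[
T^* = \begin{pmatrix} A^* & C^* \\ B^* & D^* \end{pmatrix},
\]
reading off $(A,B,C,D) \rightsquigarrow (A^*, C^*, B^*, D^*)$. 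Note $T^*$ is contractive iff $T$ is, and the projections in $\Delta$ are unchanged, so the size breakdown is preserved. This proves the main claim via condition (1) of Theorem \ref{equivalences}.

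For the ``in particular'': if $T$ is isometric, i.e. $T^*T = I$, then $T^*$ satisfies $(T^*)^*(T^*) = TT^*$, and we only know $TT^*$ is a projection (the range projection of $T$), not the identity, unless $T$ is also surjective. So $T^*$ is coisometric, i.e. $(T^*)(T^*)^* = T^*T = I$ is wrong — rather $T^*$ satisfies $T^* (T^*)^* \cdots$; let me be careful: $T$ isometric means $T^*T=I$, so $(T^*)(T^*)^* = T^* T = I$ would require... no: $(T^*)(T^*)^* = T^* T$ is false in general; $(T^*)^* = T$, so $(T^*)(T^*)^* = T^* T = I$. That is correct — $T^*$ is coisometric precisely because $(T^*)(T^*)^* = T^*T = I$. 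Hence $\breve S$ has a finite coisometric TFR via $T^*$. The only mild obstacle is bookkeeping the adjoints and confirming the resolvent identity, both routine; the conceptual point — that $\Delta(\bar z)^* = \Delta(z)$ with the same projections — is what makes everything go through cleanly.
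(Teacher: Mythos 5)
Your proposal is correct and follows essentially the same route as the paper: take adjoints in the realization formula, use that $\Delta(\bar z)^* = \Delta(z)$ since the $P_j$ are orthogonal projections, and apply the push-through identity $(I-\Delta D^*)^{-1}\Delta = \Delta(I-D^*\Delta)^{-1}$ to recognize the TFR given by $T^*$. The final paragraph on the coisometry of $T^*$ reaches the right conclusion ($T^*(T^*)^* = T^*T = I$), though it should be tidied to remove the visible self-corrections.
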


\begin{proof}
With $S(z) = A+B\Delta(z)(I-D\Delta(z))^{-1}C$
we have
\[
\begin{aligned}
\breve{S}(z) &= A^* + C^*(I-\Delta(z)D^*)^{-1} \Delta(z) B^* \\
&= A^* + C^*\Delta(z) (I- D^*\Delta(z))^{-1} B^*
\end{aligned}
\]
which is exactly condition (1) of Theorem \ref{equivalences}
with $T^*$ in place of $T$.
\end{proof}

\section{One variable version of Theorem \ref{mainthm}}\label{sec:onevar}
 
We now prove a detailed one variable version of the Main Theorem (Thm \ref{mainthm}).
If $S = Q/p:\D\to \C^{M \times N}$ is a rational iso-inner function,
then
 $S^*S = I$ on $\T$ away from zeros 
of $p$, but then $|p|^2I = Q^*Q$ on all of $\T$ 
by continuity.  

\begin{theorem}\label{onevarthmdetailed}
Assume $p \in \C[z]$ has
no zeros in $\D$, 
$Q\in \C^{M\times N}[z]$,  and $|p|^2 I = Q^*Q$ on $\T$.
Let $n$ be the maximum of the degrees
of $p$ and the entries of $Q$.
Then,
\begin{equation} \label{pkernel}
K(w,z) = \frac{\overline{p(w)}p(z)I - Q(w)^* Q(z)}{1-\bar{w}z} = (I, \bar{w}I,\dots, \bar{w}^{n-1} I) T (I, zI, \dots, z^{n-1} I)^t
\end{equation}
where $T$ is a positive semi-definite matrix whose entries
can be expressed as polynomials in the coefficients of $p, \bar{p}, Q, Q^*$.
Furthermore, $K(w,z)$ is a positive semi-definite kernel
whose rank 
matches the rank of the matrix $T$.
\end{theorem}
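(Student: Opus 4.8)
The plan is to establish the polynomial identity first, then read off positivity and rank.

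\medskip

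\noindent\textbf{Step 1: The Fej\'er--Riesz/Christoffel--Darboux computation.}
First I would expand the numerator. Since $|p|^2 I = Q^*Q$ on $\T$, the matrix polynomial $N(w,z) := \overline{p(w)}p(z) I - Q(w)^*Q(z)$ (interpreting $\overline{p(w)}$ as the polynomial in $\bar w$ with conjugated coefficients, i.e. $p^*(w) := \overline{p(\bar w)}$ evaluated so that on $\T$ it agrees with $\overline{p(w)}$) vanishes on the diagonal $w=z\in\T$, hence is divisible by $1-\bar w z$ in the ring of such bihomogeneous-in-$(\bar w,z)$ polynomials. Concretely, write $p(z) = \sum_{k=0}^n p_k z^k$ and $Q(z) = \sum_{k=0}^n Q_k z^k$ (padding with zeros up to degree $n$), so that $N(w,z) = \sum_{j,k} \bar w^j z^k (\bar p_j p_k I - Q_j^* Q_k)$. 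Divisibility by $1-\bar w z$ lets me write $N(w,z) = (1-\bar w z) \sum_{j,k=0}^{n-1} \bar w^j z^k T_{jk}$, and matching coefficients of $\bar w^j z^k$ gives explicit formulas
\[
T_{jk} = \sum_{i\geq 0} \big(\bar p_{j-i}\, p_{k-i} I - Q_{j-i}^* Q_{k-i}\big),
\]
(sum over $i$ with both indices in $[0,n]$), which are manifestly polynomials in the coefficients of $p,\bar p,Q,Q^*$. Setting $T = (T_{jk})_{j,k=0}^{n-1}$, this is precisely \eqref{pkernel} rewritten as a bilinear form in the vectors $(I,zI,\dots,z^{n-1}I)^t$.

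\medskip

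\noindent\textbf{Step 2: Positivity.}
Now I must show $T \geq 0$ and $K\geq 0$ as a kernel. The kernel $K(w,z)$ equals the reproducing kernel expression $\frac{p^*(w)p(z)I - Q(w)^*Q(z)}{1-\bar w z}$. I would argue positivity of the kernel first: for any finite set of points $z_1,\dots,z_m\in\D$ and vectors $v_1,\dots,v_m$, the sum $\sum_{a,b} v_a^* K(z_a,z_b) v_b$ should be nonnegative. The cleanest route is to observe that $S = Q/p$ is a contractive rational function on $\D$ (since $S^*S=I$ on $\T$, $p$ has no zeros in $\cd$ by the standard one-variable argument — $|p|^2 = \sum|\text{entries of }Q|^2\geq 0$ forces no zeros on $\T$, and the maximum principle applied to $S$ together with $p$ zero-free in $\D$ gives $\|S\|\leq 1$ on $\D$), so
\[
\frac{I - S(w)^* S(z)}{1-\bar w z}
\]
is a positive semi-definite kernel by the classical one-variable Schur/Pick theory (or: $\frac{1}{1-\bar w z}$ is the Szeg\H{o} kernel and $S$ is a contractive multiplier on $H^2$). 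Multiplying through by the rank-one positive kernel $\overline{p(w)}\,p(z)$ (which is $\geq 0$ on $\D\times\D$) preserves positivity by the Schur product theorem, and $\overline{p(w)}p(z)\cdot\frac{I-S(w)^*S(z)}{1-\bar w z} = K(w,z)$. Hence $K\geq 0$. Then since $K(w,z) = \mathbf{w}^* T\, \mathbf{z}$ with $\mathbf{z} = (I,zI,\dots,z^{n-1}I)^t$ and the vectors $\{\mathbf{z}: z\in\D\}$ span the full space $(\C^{?})$ (the monomials $1,z,\dots,z^{n-1}$ are linearly independent on $\D$), positivity of the kernel transfers to positivity of $T$: for any vector $\xi = \sum_a \mathbf{z}_a v_a$ one gets $\xi^* T \xi = \sum_{a,b} v_a^* K(z_a,z_b) v_b \geq 0$, and all $\xi$ arise this way.

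\medskip

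\noindent\textbf{Step 3: Rank equality.}
Finally, the rank statement. The map $z \mapsto \mathbf{z} = (I,zI,\dots,z^{n-1}I)^t$ has the property that its range (as $z$ ranges over $\D$, together with enough derivatives, or simply by Vandermonde with $n$ distinct points) spans everything, so $K(w,z) = \mathbf{w}^* T \mathbf{z}$ factors the kernel through $T$. Writing $T = \sum_\ell e_\ell e_\ell^*$ over an orthonormal basis of $(\ker T)^\perp$ — more usefully $T = L^* L$ with $L$ of full row rank equal to $\operatorname{rank} T$ — we get $K(w,z) = (L\mathbf{w})^*(L\mathbf{z})$, exhibiting $K$ as a Gram kernel of the functions $z\mapsto L\mathbf{z}$, whose linear span has dimension exactly $\operatorname{rank} L = \operatorname{rank} T$ (since $\mathbf{z}$ already spans the whole domain of $L$). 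The rank of a positive semi-definite kernel equals the dimension of the span of the functions in any such factorization, hence $\operatorname{rank} K = \operatorname{rank} T$.

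\medskip

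\noindent\textbf{Main obstacle.} The only genuinely delicate point is the bookkeeping in Step 1 — getting the conjugation conventions for $p$ on $\T$ versus the ``reflected'' polynomial $p^*$ straight so that $\overline{p(w)}$ really does extend to a polynomial in $\bar w$, and verifying the divisibility by $1-\bar w z$ at the level of coefficient matrices rather than just on $\T$. Everything else is a direct application of the Schur product theorem and the classical one-variable de Branges--Rovnyak/Pick kernel positivity, which I may quote.
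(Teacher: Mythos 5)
Your proposal follows essentially the same route as the paper's proof: divide the numerator by $1-\bar{w}z$ to obtain the coefficient matrix $T$, whose entries are polynomial in the coefficients of $p,\bar p,Q,Q^*$; obtain positivity of $K$ from the classical one-variable kernel $\frac{I-S(w)^*S(z)}{1-\bar{w}z}$ for $S=Q/p$, multiplied by the rank-one kernel $\overline{p(w)}p(z)$ via the Schur product theorem; and transfer positivity and the rank statement to $T$ by a Vandermonde/spanning argument. Steps 1 and 3 are fine; in Step 1 the precise justification for divisibility is that $z\mapsto \overline{p(1/\bar z)}p(z)I-Q(1/\bar z)^*Q(z)$ is a (matrix) Laurent polynomial vanishing on $\T$, hence identically zero on $\C\setminus\{0\}$ -- vanishing only on the real curve $\{w=z\in\T\}$ needs this analytic extension before you can divide, which is the point you flagged but did not quite carry out.

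There is, however, one genuinely wrong sub-claim in Step 2: you assert that $|p|^2 I=Q^*Q$ on $\T$ ``forces no zeros on $\T$,'' so that $p$ is zero-free on $\cd$. This is false: the hypotheses allow $p$ and $Q$ to have common zeros on $\T$ (e.g.\ $p=Q=1-z$ in the scalar case), and the paper explicitly emphasizes that permitting such common zeros is what makes the theorem usable in the two-variable argument, where the slice data can degenerate. The conclusion you actually need, $\|S(z)\|\le 1$ for $z\in\D$, is still true, but it requires the maximum principle for rational functions bounded by $1$ on $\T$ only away from finitely many points: boundary zeros of $p$ give removable singularities of $S$, since a rational function must blow up near a genuine pole, and this is exactly Proposition \ref{maxprinciple} in the appendix (rationality is essential, cf.\ $\exp\left(\frac{1+z}{1-z}\right)$). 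With that substitution in place of the incorrect ``no zeros on $\cd$'' claim, your Step 2, and hence the whole argument, goes through and coincides with the paper's proof.
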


Positive semi-definite kernels are reviewed
in Definition \ref{def:PSD} and the rank of such a kernel
is defined in Definition \ref{rank} in the Appendix.

The theorem allows for common zeros of $Q$ and $p$ 
which is important in using this result in two variables.
It immediately follows that $S=Q/p$ possesses an isometric TFR
because we can factor $T= \mathbf{F}^* \mathbf{F}$ where $\mathbf{F}$
is an $r\times nN$ matrix.  
Then, for $F(z) = \mathbf{F} (I,zI,\dots, z^{n-1}I)^t$
we have
\[
I-S(w)^*S(z) = (1-\bar{w}z) \left(\frac{F(w)}{p(w)}\right)^*  \frac{F(z)}{p(z)}.
\]
By Theorem \ref{equivalences} we see that $S$ has 
an isometric TFR.  After the proof of Theorem \ref{onevarthmdetailed}
we give an explicit way to find a formula for
an isometry $U$ out of which a TFR for $S$ can be built.
We need a standard lemma to prove Theorem \ref{onevarthmdetailed}.  
We give the short proof in the appendix; see Subsection \ref{PSDsec}.

\begin{lemma}\label{Skernel-lemma}
Assume $S:\D\to \C^{M\times N}$ is analytic and $\|S(z)\| \leq 1$ in $\D$.  Then,
the kernel
\begin{equation} \label{Skernel}
K_S(w,z) = \frac{I-S(w)^*S(z)}{1-\bar{w}z}
\end{equation}
is positive semi-definite.  
\end{lemma}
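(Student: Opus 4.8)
The plan is to reduce to the scalar case and then recognize the kernel as an honest Gram matrix of vectors in a Hardy space. First I would observe that positive semi-definiteness of a matrix-valued kernel $K_S(w,z)\in\C^{N\times N}$ means that for any finite set of points $z_1,\dots,z_m\in\D$ and vectors $v_1,\dots,v_m\in\C^N$ we need $\sum_{i,j}\ip{K_S(z_i,z_j)v_j}{v_i}\geq 0$. The standard and cleanest route is to pass to the scalar-valued function $s(z)=\ip{S(z)\eta}{\xi}$ for unit vectors $\xi\in\C^M,\eta\in\C^N$, note $\|S(z)\|\le 1$ forces $|s(z)|\le 1$, and show the scalar Szeg\H{o}-type kernel $\tfrac{1-\overline{s(w)}s(z)}{1-\bar w z}$ is positive semi-definite; then a polarization/limiting argument assembles the matrix statement. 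So the main content is the scalar inequality.

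For the scalar case I would use the reproducing kernel of the Hardy space $H^2(\D)$, namely the Szeg\H{o} kernel $k(w,z)=\tfrac{1}{1-\bar w z}$, which is positive semi-definite because $k(w,z)=\sum_{n\ge 0}\bar w^n z^n$ is a sum of rank-one positive kernels (equivalently, $k(w,z)=\ip{k_z}{k_w}_{H^2}$ where $k_z$ is the point-evaluation vector). The key identity is
\[
\frac{1-\overline{s(w)}s(z)}{1-\bar w z} = k(w,z) - \overline{s(w)}\,k(w,z)\,s(z),
\]
and since $|s|\le 1$ on $\D$, multiplication by $s$ is a contraction on $H^2$ (here one invokes that a bounded analytic function on $\D$ with sup norm $\le 1$ multiplies $H^2$ contractively — a one-variable fact I may assume, or prove quickly from the mean value property of $\log|s|$ / Fatou's theorem). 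Then $k - \bar s\,k\,s$ is exactly $(I-M_s^*)$ applied appropriately; more concretely, for a finite combination $h=\sum_i c_i k_{z_i}$ one computes
\[
\sum_{i,j}\bar c_i c_j\Bigl(k(z_i,z_j)-\overline{s(z_i)}k(z_i,z_j)s(z_j)\Bigr) = \|h\|^2 - \|M_s^* h\|^2 \geq 0,
\]
using that $M_s^* k_z = \overline{s(z)}\,k_z$. This gives the scalar kernel positivity.

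The main obstacle is purely bookkeeping: making the passage from scalar to matrix airtight, since $K_S$ is $\C^{N\times N}$-valued while $s=\ip{S\eta}{\xi}$ collapses both the $M$ and $N$ sides. The clean fix is to work directly with vector-valued Hardy space $H^2_{\C^N}(\D)$ and the operator-valued Szeg\H{o} kernel $\tfrac{I_N}{1-\bar w z}$: the multiplication operator $M_S:H^2_{\C^N}\to H^2_{\C^M}$ induced by $S$ is contractive precisely because $\|S(z)\|\le 1$ pointwise (again a standard one-variable fact), and $M_S^*$ acts on reproducing kernel vectors by $M_S^*(k_z\otimes \xi)=k_z\otimes S(z)^*\xi$. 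Then for $h=\sum_i k_{z_i}\otimes v_i$ the same two-line computation yields $\sum_{i,j}\ip{K_S(z_i,z_j)v_j}{v_i}=\|h\|^2-\|M_S^* h\|^2\ge 0$, which is exactly positive semi-definiteness of $K_S$. I would present it this way to avoid the polarization step entirely. (Since the excerpt defers this proof to the appendix, I expect the paper itself will do essentially the same thing, possibly phrased via the one-variable commutant lifting or Schur-class characterization rather than spelled out with $M_S$.)
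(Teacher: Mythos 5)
Your overall setup (vector-valued Hardy space, contractivity of $M_S$, the adjoint formula $M_S^*(k_z\otimes\xi)=k_z\otimes S(z)^*\xi$) is the same as the paper's, but your final identity is false, and the error is exactly the point the paper flags when it says the swapping of $z$ and $w$ in \eqref{Skernel} is deliberate. With $h=\sum_i k_{z_i}\otimes v_i$ (note the $v_i$ must lie in $\C^M$ for $M_S^*h$ to make sense), the two-line computation gives
\[
\|h\|^2-\|M_S^*h\|^2=\sum_{i,j}\frac{\ip{\bigl(I-S(z_i)S(z_j)^*\bigr)v_j}{v_i}}{1-\overline{z_j}\,z_i},
\]
i.e.\ it proves positive semi-definiteness of the $M\times M$-valued kernel $(z,w)\mapsto \frac{I-S(z)S(w)^*}{1-z\bar w}$, not of $K_S(w,z)=\frac{I-S(w)^*S(z)}{1-\bar w z}$, which is $N\times N$-valued, has the adjoint on the other factor, and conjugates the other variable. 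For matrix-valued kernels these are genuinely different objects: as remarked after Definition \ref{def:PSD}, positivity of $(x,y)\mapsto K(x,y)$ does not imply positivity of $(x,y)\mapsto K(y,x)$ outside the scalar case, and when $M\ne N$ your claimed identity cannot even be parsed, since $K_S(z_i,z_j)v_j$ requires $v_j\in\C^N$ while $M_S^*h$ requires $v_j\in\C^M$. Your scalar computation is fine precisely because in the scalar case the two orderings are complex conjugates of one another, which is why the subtlety is invisible there.

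The gap is short to close, and it is what the paper's appendix proof does: run the same Hardy-space argument, then apply it a second time to $\breve S(z):=S(\bar z)^*$, which is again analytic and pointwise contractive, obtaining positivity of $(z,w)\mapsto\frac{I-S(\bar z)^*S(\bar w)}{1-z\bar w}$; finally replace $z,w$ by their conjugates and relabel variables to land exactly on $K_S$. Equivalently, you would need to show that $k_w\otimes v\mapsto k_w\otimes S(\bar w)v$ extends to a contraction, which is the same maneuver in disguise. So: right toolkit, but the last step as written establishes the de Branges--Rovnyak-type ``co-kernel'' positivity rather than the stated lemma, and the reflection step through $\breve S$ is genuinely needed.
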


The swapping of 
$z$,$w$ is deliberate and is discussed in the proof in the
appendix.
\begin{proof}[Proof of Theorem \ref{onevarthmdetailed}]
By analyticity $\overline{p(1/\bar{z})}p(z) I = Q(1/\bar{z})^* Q(z)$
on $\C\setminus \{0\}$.
This implies the polynomial in $z,\bar{w}$
\[
\overline{p(w)}p(z) I - Q(w)^* Q(z)
\]
is divisible by $(1-\bar{w}z)$ and hence we can write
\eqref{pkernel} where $T$ is indeed a $nN\times nN$ matrix
whose entries are polynomials 
 in the coefficients
of $p,\bar{p}, Q, Q^*$.  We could solve for them
but we do not need to.
By Lemma \ref{Skernel-lemma}, $K_S(w,z)$ in \eqref{Skernel} is positive
semi-definite.  Multiplying through by
$\overline{p(w)}p(z)$ we have that $K(w,z)$ as in \eqref{pkernel}
is a positive semi-definite
matrix-valued polynomial function of
bounded degree.

To show $T$ is positive semi-definite, take
any $z_1,\dots, z_n \in \D$ and note that
\[
(K(z_i,z_j))_{i,j} = \begin{pmatrix} I & \bar{z}_1 I& \cdots & \bar{z}_1^{n-1} I \\
I & \bar{z}_2 I & \cdots & \bar{z}_2^{n-1} \\
\vdots & \vdots & \ddots & \vdots \\
I & \bar{z}_n I & \cdots & \bar{z}_n^{n-1} I \end{pmatrix}
T 
\begin{pmatrix} I & I & \cdots & I \\
z_1I & z_2 I & \vdots & z_n I \\
\vdots & \vdots & \ddots & \vdots \\
z_1^{n-1} I & z_2^{n-1} & \cdots & z_n^{n-1} I \end{pmatrix}
=
V^* T V
\]
is positive semi-definite
where $V = (V_{i,j})$ is the block Vandermonde matrix $V_{i,j} = z_j^{i-1} I$.
If the $z_j$ are all distinct then $V$ is invertible which implies that $T$
is positive semi-definite.
The above computation also shows that the rank of $K$ equals the rank
of $T$, although we omit some details.

\end{proof}

\begin{remark}\label{TFRformula1}
We now explain how to find an isometry 
$U$ out of which
a TFR for $S = Q/p$ can be built.
This will closely parallel our
approach in the two variable setting. 
We first factor 
$T = A^*A$ where $A$ is $r\times nN$
with $r=\text{rank}(T)$.  
Then, $A$ will possess a right inverse $B$, 
namely $AB=I$.  
Set $F(z) = A(I,zI,\dots, z^{n-1}I)^t$.  
To find $U$ such that
\[
U \begin{pmatrix} p(z)I \\ zF(z) \end{pmatrix} = \begin{pmatrix} Q(z) \\ F(z)
\end{pmatrix}
\]
we write out $p(z) = \sum_{j=0}^{n} p_jz^j, Q(z) = \sum_{j=0}^{n} z_j Q_j$
and extracting coefficients we equivalently need $U$ to satisfy
\[
U \begin{pmatrix} p_0 I & \left[p_1 I , \dots , p_n I\right] \\
O & A\end{pmatrix}
=
\begin{pmatrix} \left[Q_0 , \dots , Q_{n-1}\right] & Q_n\\
A & O \end{pmatrix}.
\]
The matrix $\begin{pmatrix} p_0 I & \left[p_1 I , \dots , p_n I\right]\\
O & A\end{pmatrix}$
has right inverse
\[
\begin{pmatrix} p_0^{-1} I & X \\
O & B \end{pmatrix}
\]
where $X = -p_0^{-1}\left[p_1I,\dots, p_nI\right] B$
so that
\[
U = \begin{pmatrix} \left[Q_0 , \dots , Q_{n-1}\right] & Q_n\\
A & O \end{pmatrix} 
\begin{pmatrix} p_0^{-1} I & X \\
O & B \end{pmatrix}.
\]
Thus, $U$ can
be computed directly
from $p,Q, A, B$.
\end{remark}

\section{Two variables and Theorem \ref{mainthm}}\label{sec:twovar}
The basic idea of Kummert's argument
is to attempt a parametrized version of the
one variable theorem above.  
The matrix Fej\'er-Riesz factorization in one variable, which
we now review,
then becomes crucial in attempting a parametrized 
version of the implication (3)$\implies$ (2)
in the Equivalences Theorem (Thm \ref{equivalences}).

\begin{theorem}[Matrix Fej\'er-Riesz] \label{genFR}
Let $T(z) = \sum_{j=-n}^{n} T_j z^j$ be a matrix Laurent polynomial ($T_j \in \C^{N\times N}$)
such that $T(z) \geq 0$ for $z \in \T$.
Then, there exist a natural number $r\leq N$, a matrix polynomial $A_0 \in \C^{r\times r}[z]$
with $\det A_0(z) \ne 0$ for $z\in \D$, and a polynomial matrix $V \in \C^{N\times N}[z]$ 
with polynomial inverse 
such that for $A = \begin{pmatrix} A_0 & 0_{r\times N-r} \end{pmatrix} V$ we have
\[
T = A^*A \text{ on } \T.
\]
Furthermore, $A$ has degree at most $n$ and a right rational inverse $B$ which is analytic in $\D$.
\end{theorem}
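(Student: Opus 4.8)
The plan is to reduce the general (rank-deficient) case to the classical full-rank matrix Fej\'er–Riesz factorization by first quotienting out the part of the kernel of $T(z)$ that is constant in $z$, and then handling the remaining ``pointwise rank drop'' via a polynomial change of variables. First I would observe that $T(z)\geq 0$ on $\T$ forces $T(z)\geq 0$ for $z$ on a full real-analytic curve, and by analytic continuation the Laurent polynomial $\det T(z)$ either vanishes identically or has only finitely many zeros on $\T$. In the generic situation where $\det T(z)\not\equiv 0$, the full-rank matrix Fej\'er–Riesz theorem (the Rosenblum–Rovnyak / Helson-type statement, which I will cite) gives an outer $A_0\in\C^{N\times N}[z]$ of degree at most $n$ with $\det A_0(z)\neq 0$ in $\D$ and $T=A_0^*A_0$ on $\T$; here $r=N$, $V=I$, and $A_0$ has a bounded analytic right (indeed two-sided) inverse on $\D$ because its determinant is outer, so has no zeros in $\D$. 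That settles the nondegenerate case.

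The substantive case is $\det T(z)\equiv 0$, i.e.\ $T(z)$ is singular for every $z$ (equivalently on $\T$). Here I would extract a maximal constant subspace on which the factorization is ``trivial'': let $\mathcal{N}=\bigcap_{z\in\T}\ker T(z)$, choose a unitary $W_0\in\C^{N\times N}$ carrying $\mathcal{N}^\perp$ onto $\C^{r_0}\times\{0\}$, and replace $T$ by $W_0 T W_0^*$, which is then block-diagonal with an $r_0\times r_0$ block $\widetilde T(z)\geq 0$ on $\T$ and zero elsewhere. Now $\widetilde T$ has no constant kernel vectors, but it may still have $\det\widetilde T(z)\equiv 0$ because the kernel rotates with $z$. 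The key step — and the one I expect to be the main obstacle — is to show that in this case one can strip off a polynomial unimodular factor: there exists $V_1\in\C^{r_0\times r_0}[z]$ with polynomial inverse such that $V_1^*(z)\widetilde T(z) V_1(z)$ is block diagonal with a strictly smaller zero block, after which one induces on the size. The standard mechanism for this is the Smith–McMillan normal form over the principal ideal domain $\C[z,z^{-1}]$ (or a direct argument pushing the minors of $\widetilde T$ around), together with the observation that $\widetilde T\geq 0$ on $\T$ makes the relevant ``Smith invariants'' occur in conjugate-reciprocal pairs, so a unimodular $V$ can absorb exactly half of each pair while preserving positivity of what remains. Getting the positivity and the degree bookkeeping to line up cleanly through this reduction is the delicate part.

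Once the rank has been reduced to the true ``analytic rank'' $r=\operatorname{rank}_{\C(z)}T(z)$, I would assemble the pieces: the composition of $W_0$ with the accumulated unimodular factors $V_1, V_2,\dots$ gives the polynomial $V\in\C^{N\times N}[z]$ with polynomial inverse, the surviving full-rank $r\times r$ block is factored by the classical theorem to produce the outer $A_0\in\C^{r\times r}[z]$ with $\det A_0$ nonvanishing on $\D$, and then $A=\begin{pmatrix}A_0 & 0_{r\times N-r}\end{pmatrix}V$ satisfies $A^*A=T$ on $\T$ by construction. The degree bound ``$A$ has degree at most $n$'' follows from tracking degrees through the full-rank factorization (the outer factor of a degree-$n$ nonnegative Laurent polynomial has degree $\leq n$) and noting that the unimodular conjugations can be arranged not to increase the total degree of the nonzero block — alternatively one can simply quote the degree statement from the full-rank case after the reduction. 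Finally, the right inverse $B$: since $\det A_0(z)\neq 0$ on $\D$ and, enlarging to the full $N\times N$ outer completion, $A_0$ has a bounded analytic inverse on $\D$, the rational matrix $B=V^{-1}\begin{pmatrix}A_0^{-1}\\ 0_{N-r\times r}\end{pmatrix}$ is analytic on $\D$ (the polynomial $V^{-1}$ causes no trouble) and satisfies $AB=\begin{pmatrix}A_0 & 0\end{pmatrix}VV^{-1}\begin{pmatrix}A_0^{-1}\\0\end{pmatrix}=I_r$, as required.
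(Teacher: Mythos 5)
Your overall strategy --- isolate the generic kernel with a polynomial unimodular change of variables, factor the surviving full-rank block by the classical matrix Fej\'er--Riesz theorem, and take $B=V^{-1}\begin{pmatrix}A_0^{-1}\\0\end{pmatrix}$ --- is the right one, and your final assembly coincides with the paper's. But the central reduction step is not actually established, and the mechanism you propose for it would not work. You want a unimodular $V_1$ with $V_1(1/\bar z)^*\widetilde T(z)V_1(z)=\begin{pmatrix}T'&0\\0&0\end{pmatrix}$, and you justify this by saying the Smith invariants of $\widetilde T$ over $\C[z,z^{-1}]$ come in conjugate-reciprocal pairs and that ``a unimodular $V$ can absorb exactly half of each pair.'' A unimodular transformation (constant, or unit, determinant) preserves the invariant factors up to units, so it cannot absorb any nonunit factor; splitting conjugate-reciprocal zero pairs is precisely the job of the outer factor $A_0$ produced by the full-rank theorem, not of $V$. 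All $V$ must do is straighten out the identically degenerate directions, and for that no pairing and no induction are needed: take the Smith normal form of the \emph{polynomial} matrix $z^nT(z)=T_1\,\mathrm{diag}(D,0)\,T_2$ over $\C[z]$ (this also yields the polynomial $V=T_2$ with polynomial inverse that the statement requires, which Smith--McMillan over $\C[z,z^{-1}]$ does not directly give), and use the symmetry $T(z)=T(1/\bar z)^*$: writing $T_2^{-1}(1/\bar z)^*T(z)T_2^{-1}(z)$ once via the factorization and once via its reflected adjoint shows its last $N-r$ columns and rows vanish, so it equals $\begin{pmatrix}T_0&0\\0&0\end{pmatrix}$ with $T_0\ge 0$ on $\T$ (automatic, being a congruence of $T$ on $\T$ --- the ``preserving positivity'' you worry about is not an issue) and $\det T_0\not\equiv 0$. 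Your preliminary step of modding out the constant kernel is harmless but unnecessary.

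Two further points. The degree bound $\deg A\le n$ does not follow from ``arranging the unimodular conjugations not to increase the degree'': $V=T_2$ may have large degree and $A=(A_0\;\,0)V$, so quoting the full-rank degree statement for $A_0$ says nothing about $A$. A genuine argument is needed, e.g.\ from $A(1/\bar z)^*A(z)=T(z)$ one gets $z^nA(1/\bar z)^*=z^nT(z)V(z)^{-1}\begin{pmatrix}A_0(z)^{-1}\\0\end{pmatrix}$, which is analytic at $0$ since $\det A_0(0)\neq 0$, and this forces $\deg A\le n$. Finally, in the nondegenerate case $\det A_0$ may vanish on $\T$, so $A_0^{-1}$ is analytic in $\D$ but need not be bounded there; the theorem only asserts analyticity of the right inverse, so the word ``bounded'' should be dropped.
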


The case where $T(z)$ is positive definite at all points
of $\T$ is usually attributed to Rosenblatt \cite{Rosenblatt}.
If $\det T(z)$ vanishes at a finite number
points, it is possible to factor out these zeros from $T$; see \cites{DGK,Dj}.
If $\det T(z)$ is identically zero, it is possible to use operator-valued 
versions of this theorem which guarantee an \emph{outer} 
factorization of $T$.  
We explain how to go from the case of $\det T \not\equiv 0$ to
the case $\det T \equiv 0$ in the appendix (subsection \ref{Fejerproofs}).
The factorization above can be computed
using semidefinite programming or 
Riccati equations (see for instance \cite{Hachez}).

Theorem \ref{genFR} in particular shows that 
$T(z)$ has rank $r$ except at 
the finite number of zeros of $\det A_0$.  
One nice application of Theorem \ref{genFR} is 
the one variable version of Theorem \ref{finitetfr}.

\begin{prop} \label{onevarfinitetfr}
 Let $S: \D\to \C^{M\times N}$ be rational and $\|S(z)\|\leq 1$ for all $z\in \D$.
Then, $S$ has a finite contractive TFR.
\end{prop}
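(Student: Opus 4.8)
The goal is to prove Proposition \ref{onevarfinitetfr}: a rational $S:\D\to\C^{M\times N}$ with $\|S(z)\|\le 1$ on $\D$ has a finite contractive TFR. By the Equivalences Theorem (Thm \ref{equivalences}), it suffices to produce a finite-dimensional Agler decomposition in the sense of condition (3), i.e. rational matrix functions $F$ and $G$ with
\[
I - S(w)^*S(z) = G(w)^*G(z) + (1-\bar w z) F(w)^* F(z).
\]
Here is where the matrix Fej\'er--Riesz theorem (Thm \ref{genFR}) enters. Write $S = Q/p$ with $p\in\C[z]$ having no zeros in $\D$ (clearing denominators, and pushing any zeros of $p$ on $\T$ outside by a M\"obius-type adjustment — or simply noting a rational function with no poles in $\D$ can be written with $p$ nonvanishing on $\cd$, up to absorbing unimodular factors) and $Q\in\C^{M\times N}[z]$. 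The hypothesis $\|S\|\le 1$ on $\D$ gives, on $\T$ and then everywhere by the maximum principle / continuity, that $|p(z)|^2 I_N - Q(z)^*Q(z) \ge 0$ for $z\in\T$. This is exactly a matrix Laurent polynomial in $z$ that is positive semi-definite on $\T$.

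Apply Theorem \ref{genFR} to $T(z) := |p(z)|^2 I_N - Q(z)^*Q(z)$: there is a polynomial matrix $A(z)$ of degree at most $n := \max(\deg p, \deg Q)$, with a right rational inverse analytic in $\D$, such that $T = A^*A$ on $\T$. Thus on $\T$,
\[
|p(z)|^2 I_N - Q(z)^*Q(z) = A(z)^*A(z),
\]
and dividing by $|p(z)|^2$ (valid on $\T$ since $p$ has no zeros there),
\[
I_N - S(z)^*S(z) = \left(\frac{A(z)}{p(z)}\right)^*\frac{A(z)}{p(z)} \quad\text{on }\T.
\]
Set $G(z) := A(z)/p(z)$, which is rational and analytic on $\D$. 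Next I would handle the term $(1-\bar w z)F(w)^*F(z)$: one needs an Agler kernel for the Schur function $S$ in one variable. The cleanest route is to invoke Lemma \ref{Skernel-lemma}, which says $K_S(w,z) = (I - S(w)^*S(z))/(1-\bar w z)$ is a positive semi-definite kernel; since $S$ is rational, $K_S$ has finite rank (as in the argument of Theorem \ref{onevarthmdetailed}: multiplying by $|p(w)||p(z)|$ turns the numerator into a polynomial of bounded degree, so $K_S$ factors through a finite-dimensional space), hence $K_S(w,z) = F(w)^*F(z)$ for some rational matrix function $F$ of the form $F(z) = \mathbf{F}(I, zI, \dots, z^{n-1}I)^t / p(z)$. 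Then $I - S(w)^*S(z) = (1-\bar w z)F(w)^*F(z)$ already, with no $G$ needed — but this only directly works if one has the polynomial-identity structure. The subtlety, and the main obstacle, is that without the iso-inner hypothesis the numerator $|p(w)|^{?}$-adjusted expression is not obviously a polynomial in $z,\bar w$ of bounded degree that is divisible by $(1-\bar w z)$; the contractivity only gives an inequality, not an identity, so one genuinely needs the $G$ term and the Fej\'er--Riesz factorization to supply it.

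So the honest plan is: use Theorem \ref{genFR} to get $G$ as above, capturing the ``defect'' coming from $\|S\|<1$; and separately observe that $I - S(w)^*S(z) - G(w)^*G(z)$ vanishes on the diagonal $\T$ and, being rational, is divisible by $(1-\bar w z)$ after clearing the common denominator $\overline{p(w)}p(z)$ — giving $F$ by a finite-rank kernel factorization exactly as in the proof of Theorem \ref{onevarthmdetailed}. Concretely: on $\T$ we have $\overline{p(w)}p(z) I - Q(w)^*Q(z) = A(w)^*A(z)$ when $w=z$; extending by the reflection $\overline{p(1/\bar z)} = $ (reversed polynomial) this becomes an honest two-variable polynomial identity $\overline{p(w)}p(z)I - Q(w)^*Q(z) - A(w)^*A(z)$ that vanishes when $\bar w z = 1$, hence is divisible by $(1-\bar w z)$; the quotient is a bounded-degree polynomial kernel, positive semi-definite by Lemma \ref{Skernel-lemma} applied to $S$ (after dividing by $\overline{p(w)}p(z)$), so it factors as $\mathbf{F}^*\mathbf{F}$ with $\mathbf{F}$ a finite matrix, and $F(z) := \mathbf{F}(I,zI,\dots,z^{n-1}I)^t/p(z)$ does the job. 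Combining, $I - S(w)^*S(z) = G(w)^*G(z) + (1-\bar w z)F(w)^*F(z)$ with $F,G$ rational, which is condition (3) of Theorem \ref{equivalences} in finite dimensions, so by that theorem (and B1) $S$ has a finite contractive TFR. The main obstacle is the bookkeeping around the boundary zeros of $p$ and making the ``divisible by $(1-\bar w z)$'' step rigorous when only an inequality (not the iso-inner identity) is in hand — this is precisely what Fej\'er--Riesz is doing for us, by converting the inequality $T(z)\ge 0$ on $\T$ into the equality $T = A^*A$.
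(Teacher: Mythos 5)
Your overall architecture is close to the paper's: both arguments hinge on applying the matrix Fej\'er--Riesz theorem (Thm \ref{genFR}) to $|p|^2I-Q^*Q\geq 0$ on $\T$ and then feeding the output into the one-variable iso-inner machinery and the Equivalences Theorem. But there is a genuine gap at the positivity step. After establishing divisibility of $\overline{p(w)}p(z)I-Q(w)^*Q(z)-A(w)^*A(z)$ by $(1-\bar wz)$ (that part is fine), you claim the quotient kernel, i.e.
\[
\frac{I-S(w)^*S(z)-G(w)^*G(z)}{1-\bar wz},\qquad G=A/p,
\]
is positive semi-definite ``by Lemma \ref{Skernel-lemma} applied to $S$.'' That lemma applied to $S$ only gives positivity of $(I-S(w)^*S(z))/(1-\bar wz)$; subtracting the PSD kernel $G(w)^*G(z)/(1-\bar wz)$ from a PSD kernel can destroy positivity, so nothing follows. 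What you actually need is contractivity on $\D$ of the \emph{stacked} function $\Phi=\begin{pmatrix} S\\ G\end{pmatrix}$, since $(I-\Phi(w)^*\Phi(z))/(1-\bar wz)$ is exactly your quotient kernel; and $\|\Phi(z)\|\leq 1$ on $\D$ does not follow from $\|S(z)\|\leq 1$ alone --- it holds because $\Phi$ is rational, analytic in $\D$, and isometric on $\T$, so the maximum principle for rational functions (Proposition \ref{maxprinciple}) applies. Once this is supplied, applying Lemma \ref{Skernel-lemma} (or directly Theorem \ref{onevarthmdetailed} with $\begin{pmatrix} Q\\ A\end{pmatrix}$ in place of $Q$) to $\Phi$ rather than to $S$ closes your argument.

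With that correction your route essentially becomes the paper's, which is shorter: form $\Phi=\begin{pmatrix} S\\ A/p\end{pmatrix}$, note it is iso-inner, invoke Theorem \ref{onevarthmdetailed} to get a finite isometric TFR for $\Phi$, and then compress via Proposition \ref{partthm} to obtain a finite contractive TFR for the submatrix $S$ --- there is no need to redo the divisibility argument or to carry a separate $G$-term through condition (3) of Theorem \ref{equivalences}. Two minor points: a rational $S$ bounded on $\D$ automatically has, in lowest terms, a denominator with no zeros on $\cd$, so the hedging about boundary zeros of $p$ is unnecessary; and the exploratory claim that ``no $G$ is needed'' is indeed false for non-inner $S$, as you subsequently recognize.
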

\begin{proof}
Write $S = Q/p$.  Then, $|p|^2 I -Q^*Q$ is positive
semi-definite on $\T$.  By Theorem \ref{genFR}, there
exists a matrix polynomial $A$ such that $|p|^2-Q^*Q = A^*A$
on $\T$.  Then, $\Phi = \begin{pmatrix} S \\ A/p \end{pmatrix}$
is iso-inner and by Theorem \ref{onevarthmdetailed} possesses
a finite isometric TFR.  By Proposition \ref{partthm}, we see that 
$S$ possesses a finite contractive TFR.
\end{proof}

The
following lemma lets us apply Theorem \ref{onevarthmdetailed}
to one variable slices.

\begin{lemma} \label{Hurwitzlemma}
Suppose $S: \D^2 \to \C^{M\times N}$ is rational and iso-inner.
Write $S = Q/p$ where $Q \in \C^{M\times N}[z_1,z_2]$, $p\in \C[z_1,z_2]$
has no zeros in $\D^2$, and $Q$, $p$ have no common factors.
Then, $|p|^2I = Q^*Q$ on $\T^2$ and for each $z_2 \in \T$, 
the one variable polynomial $z_1 \mapsto p(z_1,z_2)$ 
has no zeros in $\D$.
\end{lemma}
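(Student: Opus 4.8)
The plan is to prove the two assertions of Lemma~\ref{Hurwitzlemma} separately: first that $|p|^2 I = Q^*Q$ on all of $\T^2$, and then that for fixed $z_2 \in \T$ the polynomial $z_1 \mapsto p(z_1,z_2)$ has no zeros in $\D$. The first assertion is essentially the same continuity argument already used in the one-variable setting at the start of Section~\ref{sec:onevar}: the hypothesis ``iso-inner'' means $S^*S = I_N$ on $\T^2$ away from the zero set of $p$, so $|p(z)|^2 I_N = Q(z)^*Q(z)$ holds on a dense (indeed co-finite-measure) subset of $\T^2$; since both sides are (the restrictions to $\T^2$ of) polynomials in $z, \bar z$, they agree everywhere on $\T^2$ by continuity.

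For the second assertion, fix $z_2 \in \T$ and consider $q(z_1) := p(z_1, z_2)$ and $R(z_1) := Q(z_1, z_2)$. From the first part, $|q(z_1)|^2 I_N = R(z_1)^* R(z_1)$ for all $z_1 \in \T$. If $q$ were the zero polynomial, then $R^*R \equiv 0$ on $\T$, hence $R \equiv 0$; but then the polynomial $z_2 - (\text{that value})$ — more precisely the factor of $p$ and $Q$ corresponding to this common slice — would be a common factor of $p$ and $Q$, contradicting coprimality. (I should be a little careful here: the cleanest phrasing is that if $p(\cdot, z_2) \equiv 0$ and $Q(\cdot, z_2) \equiv 0$, then $(z_2 - c)$, or its minimal polynomial over the relevant field, divides both $p$ and $Q$ when $z_2$ is the root of such a factor; I expect to argue via the Nullstellensatz or simply by polynomial division in $\C[z_1][z_2]$.) So we may assume $q \not\equiv 0$. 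Now suppose $q$ had a zero $\zeta \in \D$. Since $p$ has no zeros in $\D^2$ and $z_2 \in \T$ lies on the boundary, $\zeta$ being a zero of $q$ means $(\zeta, z_2) \in \partial(\D^2)$ is a boundary zero, which is not yet a contradiction by itself — this is the subtle point.

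The main obstacle is ruling out a zero of $q$ that sits on $\D$: I need to use that $p$ has no zeros in the \emph{open} bidisk $\D^2$ to conclude the slice polynomial $q$ has no zeros in the open disk $\D$, which is a Hurwitz-type continuity/stability statement. The cleanest route: for $z_2' \in \D$ near $z_2$, the polynomial $z_1 \mapsto p(z_1, z_2')$ has no zeros in $\overline{\D}$ — no zeros in $\D^2$ forbids zeros in $\D$, and on $\T \times \D$ we separately have $p \ne 0$ (from $|p|^2 I = Q^*Q$ on $\T^2$ extended, plus the fact that $Q^*Q$ nonsingular forces... ) — hmm, this needs the first part plus maximum modulus. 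Actually the standard and most robust argument is Hurwitz's theorem: if $p(\cdot, z_2)$ had a zero in $\D$, then since $p(\cdot, z_2')$ converges locally uniformly to $p(\cdot, z_2)$ as $z_2' \to z_2$ with $z_2' \in \D$, and each $p(\cdot, z_2')$ is zero-free in $\D$ (as $p$ is zero-free on $\D^2$), Hurwitz's theorem would force $p(\cdot, z_2)$ to be either zero-free in $\D$ or identically zero; the latter was excluded above, so $p(\cdot, z_2)$ is zero-free in $\D$. I expect the write-up to be short: establish $|p|^2 I = Q^*Q$ on $\T^2$ by continuity, handle the $q \equiv 0$ case via coprimality, and invoke Hurwitz's theorem for the remaining case.
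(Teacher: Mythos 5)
Your proposal is correct and follows essentially the same route as the paper: continuity gives $|p|^2I=Q^*Q$ on all of $\T^2$, the identically-zero slice is ruled out because it would force $Q(\cdot,z_2)\equiv 0$ and hence the common factor $z_2-\tau$ (your hedging is unnecessary — since $\tau\in\T$ is a fixed scalar, division by the monic linear polynomial $z_2-\tau$ in $\C[z_1][z_2]$ settles divisibility), and Hurwitz's theorem applied to $p(\cdot,z_2')$ as $z_2'\to z_2$ from inside $\D$ handles the remaining case exactly as in the paper.
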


\begin{proof}
As in one variable, $|p|^2 I = Q^*Q$ on $\T^2$ by continuity.
For fixed $\tau \in \T$ notice
that $z_1\mapsto p(z_1,\tau)$ either has no 
zeros in $\D$ or is identically zero by Hurwitz's theorem
(by considering $\tau$ as a limit of $t\in \D$).
If $p(\cdot,\tau)$ is identically zero, then $Q(\cdot,\tau)$ is identically 
zero because of $|p|^2I = Q^*Q$ on $\T^2$.  
Hence both polynomials are divisible by $z_2-\tau$
contradicting the assumption of no common factors.
Thus, for every $z_2\in \T$, $z_1\mapsto p(z_1,z_2)$ has
no zeros in $\D$.
\end{proof}

We are now ready to prove the Main Theorem (Thm \ref{mainthm}).

\begin{proof}[Proof of Theorem \ref{mainthm}]
Assume the setup of Theorem \ref{mainthm} and write $S=Q/p$ 
as in Lemma \ref{Hurwitzlemma}.
We can essentially follow a parametrized
version of Remark \ref{TFRformula1}
but we use the matrix Fej\'er-Riesz theorem
to deal with certain matrix factorizations.

\underline{Step 1}: 
Fix $z_2=w_2 \in \T$, divide $\overline{p(w)}p(z)I - Q(w)^*Q(z)$ by
$(1-\bar{w}_1z_1)$, and then
extract the coefficients of $\bar{w}_1^jz_1^k$
to obtain
\begin{equation} \label{paraSC}
\frac{\overline{p(w)}p(z)I - Q(w)^* Q(z)}{1-\bar{w_1} z_1} = 
\sum_{j,k} \bar{w}_1^j z_1^k T_{jk}(z_2) = 
(I, \bar{w}_1I,\dots, \bar{w}_1^{n_1-1}I) T(z_2) \begin{pmatrix} I \\ z_1 I \\ \vdots \\ z_1^{n_1-1} I \end{pmatrix}
\end{equation}
where $T(z_2) = (T_{jk}(z_2))_{jk}$ is a positive semi-definite $(n_1N\times n_1N)$ matrix Laurent polynomial. This follows
from Theorem \ref{onevarthmdetailed} applied to $p(\cdot,z_2), Q(\cdot, z_2)$.
Here $n_1$ is the maximum of the degree of $p,Q$ with respect to $z_1$.

\underline{Step 2}: 
Apply the matrix Fej\'er-Riesz theorem (Thm \ref{genFR}) to $T(z_2)$
to get
an $r\times n_1N$ matrix polynomial
$A(z_2)$ and an analytic (in $\D$) rational matrix
function $B(z_2)$ such that $A^*A = T$ on $\T$ 
and $AB = I$ in $\D$.
For convenience we define
\[
\Lambda(z_1) = (I_{N}, z_1 I_{N},\dots, z_1^{n_1-1}I_{N})^t \in \C^{n_1N\times N}[z_1].
\]
Then, for $z_2=w_2\in \T$ and $z_1,w_1 \in \C$
\[
\overline{p(w)}p(z)I_N - Q(w)^* Q(z) 
= (1-\bar{w}_1 z_1) \Lambda(w_1)^* A(w_2)^*A(z_2) \Lambda(z_1).
\]
By Lemma \ref{Hurwitzlemma}, 
for each fixed $z_2\in \T$ the map
$z_1\mapsto \frac{Q(z_1,z_2)}{p(z_1,z_2)}$
is an iso-inner rational function and Theorem \ref{equivalences} guarantees
the existence of an isometric matrix $U(z_2)$
such that
\begin{equation} \label{Ueq0}
U(z_2) \begin{pmatrix} p(z)I_N \\ z_1 A(z_2) \Lambda(z_1) \end{pmatrix} = \begin{pmatrix} Q(z) \\ A(z_2) \Lambda(z_1) \end{pmatrix}.
\end{equation}

\underline{Step 3}: 
In this step we find a formula for $U(z_2)$
and show it extends to $\overline{\D}$ as a rational iso-inner
function in one variable.
We can rewrite \eqref{Ueq0} in terms of the coefficients of the powers of $z_1$ by
writing $p(z) = \sum_{j} p_j(z_2) z_1^j$ and $Q(z) = \sum_j Q_j(z_2) z_1^j$,
defining $\vec{p}(z_2) = (p_0(z_2)I_N, p_1(z_2)I_N, \dots, p_{n_1}(z_2)I_N)$,
and $\vec{Q}(z_2) = (Q_0(z_2), \dots Q_{n_1}(z_2))$.
Then,
\begin{equation} \label{Ueq}
U(z_2) \begin{pmatrix} \vec{p}(z_2) \\ \begin{matrix} O_{r\times N} & A(z_2) \end{matrix} \end{pmatrix} 
= \begin{pmatrix} \vec{Q}(z_2) \\ \begin{matrix} A(z_2) & O_{r\times N} \end{matrix} \end{pmatrix}
\end{equation}
using $O_{r\times N}$ to denote the $r\times N$ zero matrix.
Since $p(0,z_2)= p_0(z_2)$ has no zeros in $\D$, the matrix 
$\begin{pmatrix} \vec{p}(z_2) \\ \begin{matrix} 0 & A(z_2) \end{matrix} \end{pmatrix}$
has a rational matrix right inverse of the form $\begin{pmatrix} p_0(z_2)^{-1}I & X(z_2) \\ 0 & B(z_2) \end{pmatrix}$.
The exact formula for $X(z_2)$ is $-\frac{1}{p_0} (p_1 I, \dots, p_{n_1} I) B$.
Then,
\begin{equation} \label{Uformula}
U(z_2) = \begin{pmatrix} \vec{Q}(z_2) \\ \begin{matrix} A(z_2) & 0 \end{matrix} \end{pmatrix}  \begin{pmatrix} p_0(z_2)^{-1}I & X(z_2) \\ 0 & B(z_2) \end{pmatrix}
\end{equation}
extends to a rational function holomorphic in $\D$ and isometry-valued on $\T$ away from any singularities.  
So, not only is $U$ uniquely determined (by $A,B$) and iso-inner but
both sides of \eqref{Ueq} are now holomorphic, so \eqref{Ueq} extends to $\D$.
(We caution that the blocks in \eqref{Uformula} do not line up as written.  
There is no need to multiply this out, so there is no real concern.)

\underline{Step 4}: 
In this step we find an isometric
matrix $V$ such that $S$ has a TFR built out of
$V$.  
It turns out $U(z_2)$ as a one variable function
has a TFR built out of the same isometry $V$.
Indeed, by Theorem \ref{onevarthmdetailed} and Theorem \ref{equivalences}
there exist a constant isometric matrix $V$ and matrix function $F(z_2)$
such that
\[
V \begin{pmatrix} I \\ z_2 F(z_2) \end{pmatrix} = \begin{pmatrix} U(z_2) \\ F(z_2) \end{pmatrix}.
\]
A formula for $V$ can be found via Remark \ref{TFRformula1}.
As we now show, $V$ is the isometry we are looking for.
If we multiply on the right by $\begin{pmatrix} p(z)I \\ z_1 A(z_2) \Lambda(z_1) \end{pmatrix}$
and define $H(z) := F(z_2) \begin{pmatrix} p(z)I \\ z_1 A(z_2) \Lambda(z_1) \end{pmatrix}$, $G(z) := A(z_2) \Lambda(z_1)$
we get
\[
V \begin{pmatrix} p(z)I \\ z_1 G(z) \\ z_2 H(z) \end{pmatrix} = \begin{pmatrix} Q(z) \\ G(z) \\ H(z) \end{pmatrix}.
\]
By Theorem \ref{equivalences}, this means $S$ has 
a finite-dimensional isometric transfer function realization
built out of the isometry $V$.
This proves Theorem \ref{mainthm}.
\end{proof}

When we prove the minimality theorem (Thm \ref{minthm}) we will pick up where this proof leaves off.
We will later refer to $G^*G$ as the \emph{dominant $z_1$-term}
associated to $S$, while we will refer to $H^*H$ as the \emph{sub-dominant $z_2$-term}.
We write $G^*G:= G(w)^*G(z), H^*H:= H(w)^*H(z)$ instead of $G,H$ because the former are 
uniquely determined while $G,H$ are determined up to left multiplication by isometric matrices.
By symmetry we could also construct a dominant $z_2$-term with associated
sub-dominant $z_1$-term.  

\section{Detailed example}

In this section we give a detailed example
of the 4 steps presented in the proof of Theorem \ref{mainthm}.
The $N\times N$ identity matrix is written $I_N$,
the $N\times N$ zero matrix is written $O_N$, and the $N\times M$
zero matrix is written $O_{N\times M}$.

Consider the following simple rational inner function
\[
S(z) = \frac{1}{2} \begin{pmatrix} z_1(z_1+z_2) & z_1z_2(z_1-z_2) \\
z_1-z_2 & z_2(z_1+z_2) \end{pmatrix} =
 \begin{pmatrix} z_1 & 0 \\ 0 & 1 \end{pmatrix} X \begin{pmatrix} z_1 & 0 \\ 0 & z_2 \end{pmatrix} X
 \begin{pmatrix} 1 & 0 \\ 0 & z_2 \end{pmatrix}
\]
where $X = \frac{1}{\sqrt{2}} \begin{pmatrix} 1 & 1 \\ 1 & -1\end{pmatrix}$ is a unitary.
The right expression shows $S$ is a product of inner functions and
is therefore inner itself.  Since $S$ is a polynomial
the process below will be simpler than the general case
but still illustrative. Note then that referring
to the proof of Theorem \ref{mainthm} we have $p=1$ and $Q=S$.

\underline{Step 1}: 
Set $|z_2|=1$, divide $I-S(w_1,z_2)^*S(z_1,z_2)$
by $1-\bar{w}_1z_1$, and extract coefficients
of the monomials $\bar{w}_1^jz_1^k$ in order to
write
\[
\frac{I-S(w_1,z_2)^*S(z_1,z_2)}{1-\bar{w}_1z_1} = 
\sum_{j,k=0,1} \bar{w}_1^j z_1^k T_{jk}(z_2) 
=
(I_2, \bar{w}_2 I_2) T(z_2) \begin{pmatrix} I_2 \\ z_1 I_2\end{pmatrix}
\]
where $T(z_2)$ is the matrix Laurent polynomial
\[
T(z_2) = \frac{1}{4} \begin{pmatrix} 3 & z_2 & z_2^{-1} & 1 \\
z_2^{-1} & 3 & -z_2^{-2} & -z_2^{-1} \\
z_2 & -z_2^2 & 1 & z_2 \\
1 & z_2 & z_2^{-1} & 1
\end{pmatrix}.
\]
Necessarily, $T$ is positive semi-definite on $\T$.  

\underline{Step 2}: Factor $T$ according to 
the one variable matrix Fej\'er-Riesz theorem.
There exist algorithms for doing this (\cite{Hachez})
and it can also be essentially reduced to 
polynomial algebra and one variable Fej\'er-Riesz
factorizations (see \cite{Dj} where
this is done in a more general setup).
We get $T(z_2) = A(z_2)^* A(z_2)$ on $\T$ where
\[
A(z_2) = \frac{1}{2} \begin{pmatrix} \sqrt{2} & (\sqrt{2})z_2 & 0 & 0 \\
z_2 & -z_2^2 & 1 & z_2 \end{pmatrix} =: (A_0(z_2), A_1(z_2))
\]
has right inverse
\[
B(z_2) = \begin{pmatrix} \sqrt{2} & 0 \\
0 & 0 \\
(-\sqrt{2})z_2 & 2 \\
0 & 0 \end{pmatrix} =: \begin{pmatrix} B_0(z_2) \\ B_1(z_2) \end{pmatrix}.
\]
We use the equations above to define the $2\times 2$ matrix polynomials
$A_0(z_2), A_1(z_2), B_0(z_2) B_1(z_2)$.
Note that the right inverse in general could be rational.

\underline{Step 3}:
We find our parametrized unitary $U(z_2)$
in this step. 
Form
the ``vectors'' of coefficients
\[
\vec{p}(z_2) = (I_2, O_2, O_2) \text{ and } \vec{Q}(z_2) = (Q_0(z_2), Q_1(z_2), Q_2(z_2))
\]
where
\[
Q_0(z_2) = \frac{1}{2}\begin{pmatrix} 0 & 0 \\ -z_2 & z_2^2 \end{pmatrix} ,
Q_1(z_2) = \frac{1}{2}\begin{pmatrix} z_2 & -z_2^2 \\ 1 & z_2 \end{pmatrix},
Q_2(z_2) = \frac{1}{2} \begin{pmatrix} 1 & z_2 \\ 0 & 0 \end{pmatrix}
\]
and then compute the one variable rational inner function 
$U(z_2)$ as in \eqref{Ueq}
\[
\begin{aligned}
U(z_2) &= 
\begin{pmatrix} Q_0(z_2) & Q_1(z_2) & Q_2(z_2) \\
A_0(z_2) & A_1(z_2) & O_2 \end{pmatrix} \begin{pmatrix} I_2 & O_2 \\
O_2 & B_0(z_2) \\
O_2 & B_1(z_2) \end{pmatrix} \\
&= 
\begin{pmatrix} 0 & 0 & 0 & 1 \\
-\frac{z_2}{2} & \frac{z_2^2}{2} & \frac{1}{\sqrt{2}} & 0 \\
\frac{1}{\sqrt{2}} & \frac{z_2}{\sqrt{2}} & 0 & 0 \\
\frac{z_2}{2} & -\frac{z_2^2}{2} & \frac{1}{\sqrt{2}} & 0 \end{pmatrix}.
\end{aligned}
\]

The fourth step is to find a TFR for $U(z_2)$.
To do this we apply Remark \ref{TFRformula1}.
Let us emphasize the steps.
Divide $I_4-U(w_2)^* U(z_2)$
by $1-\bar{w}_2z_2$ and extract coefficients of $\bar{w}_2^jz_2^k$
to write
\[
\frac{I_4-U(w_2)^*U(z_2)}{1-\bar{w}_2 z_2} = 
(I_4, \bar{w}_2 I_4) Y \begin{pmatrix} I_4 \\ z_2 I_4 \end{pmatrix}
\]
where
\[
Y=\begin{pmatrix} \begin{pmatrix} \frac{1}{2} & 0 \\ 0 & 1 \end{pmatrix} & O_2 & \begin{pmatrix} 0& -\frac{1}{2} \\ 0 &0 \end{pmatrix} & O_2\\
O_2 & O_2 & O_2 & O_2 \\
\begin{pmatrix} 0 & 0 \\ -\frac{1}{2} & 0 \end{pmatrix} & O_2 &\begin{pmatrix} 0 & 0 \\ 0 & \frac{1}{2} \end{pmatrix} & O_2 \\
O_2 & O_2 & O_2 & O_2 \end{pmatrix}.
\]
Then, we factor $Y = C^*C$ where
\[
C = \left[\begin{pmatrix} \frac{1}{\sqrt{2}} & 0 \\ 0 & 1 \end{pmatrix}, O_2, \begin{pmatrix}
0 & -\frac{1}{\sqrt{2}} \\ 0 & 0 \end{pmatrix}, O_2 \right].
\]
Note that 
\[
D = \begin{pmatrix} \begin{pmatrix} \sqrt{2} & 0 \\ 0 & 1 \end{pmatrix} \\ O_2 \\ O_2 \\ O_2\end{pmatrix}
\]
is a right inverse for $C$ (i.e. $CD = I_2$).
Set 
\[
F(z_2) = C \begin{pmatrix} I_4 \\ z_2 I_4 \end{pmatrix} 
= \begin{pmatrix} \frac{1}{\sqrt{2}} & -\frac{1}{\sqrt{2}}z_2
 & 0 & 0 \\ 0 & 1 & 0 & 0 \end{pmatrix}.
\]

We need to compute the unitary (or isometry in general) $V$
such that
\[
V \begin{pmatrix} I_4 \\ z_2 F(z_2) \end{pmatrix} = \begin{pmatrix} U(z_2) \\ F(z_2) \end{pmatrix}.
\]
After equating coefficients of powers of $z_2$ this is equivalent to
\[
V \begin{pmatrix} I_4 & O_{4\times 8} \\ O_{2\times 4} & C \end{pmatrix} = 
\begin{pmatrix} U_0 & U_1 & U_2 \\ 
\left[\begin{pmatrix} \frac{1}{\sqrt{2}} & 0 \\ 0 & 1 \end{pmatrix}, O_2\right] & \left[\begin{pmatrix}
0 & -\frac{1}{\sqrt{2}} \\ 0 & 0 \end{pmatrix}, O_2 \right] & O_{2\times 4} \end{pmatrix}
\]
where $U(z_2) = U_0 + z_2 U_1 + z_2^2 U_2$.
Using the right inverse $D$ we have
\[
\begin{aligned}
V &= \begin{pmatrix} U_0 & U_1 & U_2 \\ 
\left[\begin{pmatrix} \frac{1}{\sqrt{2}} & 0 \\ 0 & 1 \end{pmatrix}, O_2\right] & \left[\begin{pmatrix}
0 & -\frac{1}{\sqrt{2}} \\ 0 & 0 \end{pmatrix}, O_2 \right] & O_{2\times 4} \end{pmatrix} 
\begin{pmatrix} I_4 & O_{4\times 2} \\ O_{8\times 4} & D\end{pmatrix} \\
&= \begin{pmatrix} 0 & 0 & 0 & 1 & 0 & 0 \\
0 & 0 & \frac{1}{\sqrt{2}} & 0 & -\frac{1}{\sqrt{2}} & 0 \\
\frac{1}{\sqrt{2}} & 0 & 0 & 0 & 0 & \frac{1}{\sqrt{2}} \\
0 & 0 & \frac{1}{\sqrt{2}} & 0 & \frac{1}{\sqrt{2}} & 0 \\
\frac{1}{\sqrt{2}} & 0 & 0 & 0 & 0 & -\frac{1}{\sqrt{2}} \\
0 & 1 & 0 & 0 & 0 & 0 \end{pmatrix}.
\end{aligned}
\]
This is the desired unitary out of which we build our TFR.
Setting $V_{11} = O_2$
\[
V_{12} = \begin{pmatrix} 0 & 1 & 0 & 0 \\ 
\frac{1}{\sqrt{2}} & 0 & -\frac{1}{\sqrt{2}} & 0 \end{pmatrix}
\]
\[
V_{21} = \begin{pmatrix} \frac{1}{\sqrt{2}} & 0 \\ 0 & 0 \\ \frac{1}{\sqrt{2}} & 0 \\ 0 & 1 \end{pmatrix} ,
V_{22} = \begin{pmatrix} 0 & 0 & 0 & \frac{1}{\sqrt{2}} \\
\frac{1}{\sqrt{2}} & 0 & \frac{1}{\sqrt{2}} & 0 \\
0 & 0 & 0 & -\frac{1}{\sqrt{2}} \\
0 & 0 & 0 & 0 \end{pmatrix}
\]
we have
\[
S(z_1,z_2) = V_{11} + V_{12} \Delta(z)( I- V_{22} \Delta(z))^{-1} V_{21}
\]
where $\Delta(z_1,z_2) = \begin{pmatrix} z_1 I_2 & O_2 \\ O_2 & z_2 I_2 \end{pmatrix}$.
This is easy to verify since $(V_{22}\Delta(z))^3 = O$ so that
the formula reduces to 
\[
S(z) = V_{12} \Delta(z)(I + V_{22} \Delta(z)+ (V_{22} \Delta(z)^2) V_{21}
\]
which can be verified by hand.

While the above method involves
several steps it is entirely systematic.
Since $S$ is a product of simple 
inner functions, there are ad hoc ways
of coming up with a TFR 
which might be shorter.

\section{Matrix Agler decompositions in two variables}\label{sec:AD}

Theorem \ref{mainthm}
makes it possible to prove
Agler's theorem (Thm \ref{Aglerthm}).
Cole-Wermer \cite{CW99}
showed that in the scalar case
it is enough to prove Agler's theorem
for rational inner functions because
holomorphic $f:\D^2\to \D$
 can be approximated locally uniformly 
by rational inner functions 
(Theorem 5.5.1 of Rudin \cite{Rudin}).
This approximation argument 
does not seem to transfer to the 
matrix-valued function setting, but there is a workaround.

\begin{lemma} \label{Schurbreakup}
Let $f:\D^d \to \C^{M\times N}$ be holomorphic and
$\|f(z)\| \leq 1$ for all $z\in \D^d$.
Suppose $\|f(z_0)\|=1$ for some $z_0 \in \D^d$.
Then, there exist unitary matrices $U_1,U_2$ such that $U_1 f U_2$ is a direct sum of a constant unitary
matrix and a matrix valued holomorphic function $g$ on $\D^d$ with $\|g(z)\|<1$ 
for all $z\in \D^d$.
\end{lemma}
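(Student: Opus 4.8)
The plan is to exploit the maximum modulus principle at the level of individual vectors. Suppose $\|f(z_0)\| = 1$ for some $z_0 \in \D^d$. Then there is a unit vector $v \in \C^N$ with $\|f(z_0)v\| = 1$. First I would consider the scalar-valued holomorphic function $h(z) = \ip{f(z)v}{u}$ where $u = f(z_0)v$ (a unit vector in $\C^M$). Since $\|f(z)v\| \le 1$ and $\|u\| = 1$, we have $|h(z)| \le 1$ on $\D^d$ with $h(z_0) = 1$, so $h$ is constant equal to $1$ by the maximum principle. From $1 = \ip{f(z)v}{u} \le \|f(z)v\| \le 1$ and the equality case of Cauchy--Schwarz, I get $f(z)v = u$ for all $z \in \D^d$, i.e. $v$ is a ``frozen'' direction for $f$.

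Next I would upgrade this to a reducing-type decomposition. Let $\mathcal{V} = \{v \in \C^N : f(z)v \text{ is independent of } z\}$; by the above $\mathcal{V} \ne \{0\}$. Choosing unitaries $U_2$ on $\C^N$ and $U_1$ on $\C^M$ appropriately, I may assume $\mathcal{V}$ is the span of the first $k$ standard basis vectors and that $f(z)$ restricted to $\mathcal{V}$ has image spanned by the first $k$ standard basis vectors of $\C^M$; write $\tilde f = U_1 f U_2$ in block form $\tilde f(z) = \begin{pmatrix} W & Y(z) \\ 0 & g(z)\end{pmatrix}$ where $W$ is a constant $k \times k$ matrix and the lower-left block vanishes because the columns through $\mathcal V$ are constant and (after the $U_1$ rotation) land in the first $k$ coordinates. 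The constancy of the first $k$ columns forces the $(1,1)$ block to be the constant $W$ and the $(2,1)$ block to be $0$. Since $\tilde f$ is still a contraction pointwise, each column has norm $\le 1$; for a column through $\mathcal V$ this norm is exactly $\|Wv\|$ for the corresponding $v$, and I claim $\|Wv\| = \|v\|$ there because the frozen column $f(z_0)v = u$ has norm $1$ for the specific witness and, more carefully, one repeats the first-paragraph argument to conclude that every $v \in \mathcal{V}$ satisfies $\|f(z)v\| = \|v\|$ (apply it to $f(z)v/\|f(z)v\|$ after noting $\|f(z)v\|$ is a constant $\le 1$; if it were $<1$ then $v \notin$ the equality locus, but $v \in \mathcal V$ only records constancy — so I should instead define $\mathcal V$ as the span of directions where $\|f(z)v\|$ equals $\|v\|$, which by the first paragraph is nonempty and is a subspace). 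With that corrected definition, $W$ is an isometry on $\C^k$, hence unitary, and moreover $Y(z) \equiv 0$: indeed for $v \in \mathcal V$ the column $\tilde f(z)e_v = \begin{pmatrix} Wv \\ 0\end{pmatrix}$ has full norm $\|v\|$, so in the contraction $\tilde f(z)$ this column is a unit (up to scaling) vector in the range, forcing orthogonality to the other columns; differentiating or using $\tilde f(z)^*\tilde f(z) \preceq I$ with a norm-preserving column gives $Y(z) = 0$. Thus $\tilde f = W \oplus g$.

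Finally I would check $\|g(z)\| < 1$ everywhere on $\D^d$. If not, the same argument applied to $g$ produces a further frozen direction, contradicting the maximality in the definition of $\mathcal V$ (the block $g$ acts on the orthogonal complement of $\mathcal V$, and a norm-preserving direction for $g$ would be one for $f$ not already in $\mathcal V$). This gives the claimed decomposition with $U_1, U_2$ the rotations assembled above.

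\textbf{Main obstacle.} The delicate point is organizing the linear algebra so that the ``frozen directions'' genuinely split off as an orthogonal direct summand rather than merely as a triangular block --- i.e. showing the off-diagonal block $Y(z)$ vanishes. The clean way is to argue columnwise: a column of a contraction matrix that has norm equal to its maximal possible value (here, norm $1$ after normalizing) must be orthogonal to the column space of the remaining columns, because otherwise projecting off it would strictly decrease... one instead uses that if $\|M\| \le 1$ and $Mv$ has $\|Mv\| = \|v\|$ then $M^*Mv = v$, so $v \perp \ker(I - M^*M)^\perp$ forces the relevant cross terms in $M^*M$ to vanish. Getting the definition of the invariant subspace $\mathcal V$ exactly right (directions of norm preservation, which is automatically a subspace and automatically reducing once we know $M^*Mv=v$) is what makes the rest routine.
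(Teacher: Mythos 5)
Your argument is correct and takes essentially the same approach as the paper's: both rest on the maximum principle plus equality in Cauchy--Schwarz to freeze a norm-attaining direction, and on the fact that a contraction $M$ with $\|Mv\|=\|v\|$ satisfies $M^*Mv=v$, which gives the reducing (direct-sum, not merely triangular) splitting. The only difference is organizational --- the paper peels off one frozen direction at a time and iterates, whereas you split off the full norm-preserving subspace $\mathcal{V}$ in one step (and your mid-proof correction, defining $\mathcal{V}$ by norm preservation rather than mere constancy of $f(z)v$, is exactly the right fix).
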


\begin{proof}
If $\|f(z_0)\| = 1$, then there exists $v \in \C^N$ 
with $|v|=1$ such that $|f(z_0)v| = 1$.
By the maximum principle,
$\langle f(z)v, f(z_0)v\rangle$
is constant and equal to one.  
Then, by equality in Cauchy-Schwarz, $f(z)v \equiv f(z_0)v$.
Since $f(z)$ has at most norm one, $v$ is reducing for $f(z)$
meaning $f(z)w \perp f(z)v$ whenever $v\perp w$.  
Thus, $f(z)$ can be written in the form
\[
\begin{pmatrix} 1 & 0 \\ 0 & g(z) \end{pmatrix}
\]
using the block decomposition 
$\C f(z_0)v \oplus (f(z_0)v)^{\perp} \times 
(\C v) \oplus v^{\perp}$.
We can of course iterate this argument
until we are left with the claimed decomposition.
\end{proof}

This lets us reduce to the case
of $f$ with $\|f(z)\|<1$ for
all $z\in \D^d$. 
The following is found in Rudin's book \cite{Rudin} in the scalar case 
(see Theorem 5.5.1 of \cite{Rudin}).  Define
\[
\| f\|_{\D^d} = \sup_{z\in \D^d} \|f(z)\|.
\]

\begin{lemma} \label{Rudinapprox}
Suppose $f:\D^d \to \C^{M\times N}$ is holomorphic
and $\|f(z)\|<1$ for all $z\in \D^d$.  
Then, for any $r\in (0,1)$ and $\epsilon > 0$
there exists $P\in \C^{M\times N}[z_1,\dots,z_d]$ such that
$\|P\|_{\D^d}<1$ and $\|f-P\|_{r\D^d} < \epsilon$.

Consequently, every such $f$ is a local uniform limit
of matrix polynomials with supremum norm strictly 
less than $1$.
\end{lemma}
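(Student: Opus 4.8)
The plan is to reduce the matrix-valued statement to the scalar case already available in Rudin. First I would fix $r \in (0,1)$ and $\epsilon > 0$, and choose $\rho \in (r,1)$ so that $\|f\|_{\overline{\rho\D^d}} =: c < 1$; such a $\rho$ exists because $\|f(z)\| < 1$ on all of $\D^d$ and $\|f\|$ is continuous, so $\|f\|$ attains a maximum $c<1$ on the compact set $\overline{r'\D^d}$ for any $r' \in (r,1)$, and we take $\rho$ slightly bigger than $r$. Actually, more simply: since $f$ is holomorphic on $\D^d$, the function $z \mapsto f(\rho z)$ is holomorphic on a neighborhood of $\overline{\D^d}$ for each $\rho < 1$, so replacing $f$ by $f(\rho \cdot)$ with $\rho$ close to $1$ we may as well assume $f$ is holomorphic on a neighborhood of $\overline{\D^d}$ with $\|f\|_{\overline{\D^d}} = c < 1$, and it suffices to approximate such an $f$ uniformly on $\overline{\D^d}$ by polynomials of sup-norm less than $1$.

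Next I would approximate $f$ entrywise by its Taylor polynomial. Writing $f(z) = \sum_\alpha f_\alpha z^\alpha$ with $f_\alpha \in \C^{M\times N}$, the partial sums $P_\delta(z) = \sum_{|\alpha| \le \delta} f_\alpha z^\alpha$ converge to $f$ uniformly on $\overline{\D^d}$ (by holomorphy on a larger polydisk, or by applying the scalar statement to each entry). Choose $\delta$ large enough that $\|f - P_\delta\|_{\overline{\D^d}} < \min(\epsilon, (1-c)/2)$. Then $\|P_\delta\|_{\overline{\D^d}} \le c + (1-c)/2 = (1+c)/2 < 1$, so $P := P_\delta$ is a matrix polynomial with $\|P\|_{\D^d} < 1$ and $\|f - P\|_{r\D^d} \le \|f-P\|_{\overline{\D^d}} < \epsilon$. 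This gives the first assertion.

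For the ``consequently'' clause, apply the first assertion with a sequence $r_n \uparrow 1$ and $\epsilon_n \downarrow 0$ to obtain polynomials $P_n$ with $\|P_n\|_{\D^d} < 1$ and $\|f - P_n\|_{r_n \D^d} < \epsilon_n$; any compact subset of $\D^d$ is eventually contained in $r_n \D^d$, so $P_n \to f$ uniformly on compact subsets. The main point to be careful about — really the only subtlety — is ensuring the \emph{strict} inequality $\|P\|_{\D^d} < 1$ survives the approximation; this is exactly why I first shrink the domain so that $\|f\|$ is bounded by a constant $c$ strictly less than $1$, leaving room of size $(1-c)/2$ to absorb the Taylor approximation error. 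No genuine obstacle remains once that buffer is in place; the convergence of Taylor partial sums is routine since after the initial rescaling $f$ extends holomorphically past $\overline{\D^d}$.
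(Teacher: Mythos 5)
Your overall strategy (dilate so there is a strict buffer $1-c$, then take a Taylor polynomial whose error fits inside that buffer) is the same as the paper's, but as written there is a genuine gap in the opening reduction. After you replace $f$ by $\tilde f(z)=f(\rho z)$, every estimate in the remainder of the argument concerns $\tilde f$: you produce a polynomial $P$ with $\|P\|_{\D^d}<1$ and $\|\tilde f-P\|_{\overline{\D^d}}<\epsilon$, that is, $\|f(\rho w)-P(w)\|<\epsilon$ for $w\in r\D^d$, which is not the required conclusion $\|f(w)-P(w)\|<\epsilon$ on $r\D^d$; your final line ``$\|f-P\|_{r\D^d}\le\|f-P\|_{\overline{\D^d}}<\epsilon$'' silently conflates $f$ with its dilate. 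The ``we may as well assume'' step cannot be closed by rescaling the polynomial either, since $\|P(\cdot/\rho)\|_{\D^d}=\|P\|_{(1/\rho)\D^d}$ need not be less than $1$ (that would spend the very buffer you built), and your remark that one may ``take $\rho$ slightly bigger than $r$'' makes $f(\rho\,\cdot)$ genuinely far from $f$ on $r\D^d$ in general.

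The missing ingredient is a comparison between $f$ and its dilate, with $\rho$ chosen depending on $\epsilon$ (not just on $r$): since $f$ is uniformly continuous on the compact set $\overline{r\D^d}$ and both $w$ and $\rho w$ lie there for $w\in r\D^d$, one can take $\rho\in(0,1)$ so close to $1$ that $\|f(w)-f(\rho w)\|<\epsilon/2$ for all $w\in r\D^d$; then run your Taylor-polynomial step for $f(\rho\,\cdot)$ with error less than $\min(\epsilon/2,(1-c)/2)$ and finish with the triangle inequality. This is precisely the first step of the paper's proof: there one fixes $r$, chooses $s$ with $\|f_r-f_{rs}\|_{\D^d}<\epsilon/2$ by uniform continuity of $f_r$ on $\overline{\D}^d$, takes a Taylor polynomial $P$ of $f_s$ with $\|f_s-P\|_{\D^d}<\min(1-\|f_s\|_{\D^d},\epsilon/2)$, and concludes $\|P\|_{\D^d}<1$ together with $\|f_r-P_r\|_{\D^d}<\epsilon$. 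With that one estimate inserted, your argument is correct and coincides with the paper's; the ``consequently'' clause is fine as you wrote it.
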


\begin{proof}
Set $f_r(z) = f(rz)$ for $r\in (0,1)$.
For fixed $r\in (0,1)$ there exists
$s \in (0,1)$ such that $\|f_r-f_{rs}\|_{\D^d} < \epsilon/2$
since $f_r$ is uniformly continuous on $\overline{\D}^d$.
Note $\|f_s\|_{\D^d} < 1.$
Choose a Taylor polynomial $P$ of $f_s$
such that $\|f_s - P\|_{\D^d} < \min(1-\|f_s\|_{\D^d}, \epsilon/2)$.
Then, $\|P\|_{\D^d} < 1$ and $\|f_r-P_r\|_{\D^d} \leq \|f_r-f_{rs}\|_{\D^d} + \|f_{rs} - P_r\|_{\D^d}
< \epsilon$.
\end{proof}

We need the following Fej\'er-Riesz type theorem
of Dritschel.  

\begin{theorem}[Dritschel \cite{mD1}] \label{dritschel1}
Let $T(z) = \sum_{j\in \Z^d} T_j z^j$ be a matrix-valued Laurent polynomial in $d$ variables; 
i.e. $T_j \in \C^{N\times N}$ for $j\in \Z^d$ and at most finitely many $T_j \ne 0$.
If there is a $\delta >0$ such that $T(z) \geq \delta I$ on $\T^d$, then there exists a matrix polynomial $A\in \C^{M\times N}[z_1,\dots,z_d]$
such that $T = A^*A$ on $\T^d$.
\end{theorem}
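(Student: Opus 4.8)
The plan is to prove Theorem~\ref{dritschel1} by a convexity/duality argument --- in the spirit of classical Positivstellens\"atze --- rather than by attempting a literal factorization. The first point is that the conclusion is \emph{weaker} than an outer factorization: it allows $A$ to have more rows than columns, so it asserts only that $T$ is a finite sum of Hermitian squares $T=\sum_{l=1}^{L}P_l^*P_l$ on $\T^d$ with each $P_l\in\C^{1\times N}[z_1,\dots,z_d]$ (then $A$ is the matrix with rows $P_l$, so $M=L$). This relaxation is essential: for $d\ge 2$ a strictly positive \emph{scalar} trigonometric polynomial on $\T^d$ need not be $|p|^2$ for a single polynomial $p$, so the naive induction --- peel off one variable and apply the operator-valued one-variable Fej\'er--Riesz theorem to the multiplication operators $M_{T_j}$ on $L^2(\T^{d-1},\C^N)$ --- produces an outer factor that is polynomial in the peeled-off variable but only a real-analytic, generically non-polynomial, function of the remaining ones, and so stalls. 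The sum-of-squares formulation is exactly what makes the statement both true and amenable to duality.

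Accordingly, the goal becomes: for some integer $D$ at least as large as the coordinatewise degree of $T$, show that $T$ lies in the set $\mathcal C_D$ of Hermitian matrix trigonometric polynomials of the form $\sum_l P_l^*P_l$ with each $P_l$ of coordinatewise degree $\le D$. For such $D$ both $T$ and every element of $\mathcal C_D$ lie in the finite-dimensional real vector space $V_D$ of Hermitian matrix Laurent polynomials with exponents in $[-D,D]^d$, and $\mathcal C_D$ is a \emph{closed} convex cone, being the conic hull of the compact set $\{P^*P:P\in\C^{1\times N},\ \deg P\le D,\ \|P\|=1\}$, which does not contain $0$ (a nonzero polynomial cannot vanish identically on $\T^d$). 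If $T\in\mathcal C_D$ for one such $D$, we are done. Otherwise, for every admissible $D$ a Hahn--Banach separation in $V_D$ yields a linear functional $L_D$ with $L_D\ge 0$ on $\mathcal C_D$ but $L_D(T)<0$; encoding $L_D$ by dual ``moments'' $\Lambda^{(D)}_\gamma\in\C^{N\times N}$ ($\gamma\in[-D,D]^d$), the condition $L_D(P^*P)\ge 0$ for all $P$ of degree $\le D$ says precisely that the block-Toeplitz matrix $\big(\Lambda^{(D)}_{\beta-\alpha}\big)_{\alpha,\beta\in[0,D]^d}$ is positive semi-definite.

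Finally, I would push $D\to\infty$. Positive semi-definiteness of the moment matrix forces $\|\Lambda^{(D)}_\gamma\|\le\|\Lambda^{(D)}_0\|$, and forces $L_D\equiv 0$ if $\Lambda^{(D)}_0=0$; hence after rescaling so that $\operatorname{tr}\Lambda^{(D)}_0=1$ the moments are uniformly bounded, and a diagonal subsequence converges to a single positive-definite $\C^{N\times N}$-valued sequence $(\Lambda_\gamma)_{\gamma\in\Z^d}$ with $\operatorname{tr}\Lambda_0=1$ and $\sum_\gamma\langle T_\gamma,\Lambda_\gamma\rangle\le 0$. By the matrix-valued Herglotz--Bochner theorem on $\T^d$, $(\Lambda_\gamma)$ is the Fourier-coefficient sequence of a positive $\C^{N\times N}$-valued measure $\mu$ on $\T^d$ with $\operatorname{tr}\mu(\T^d)=1$; then $\sum_\gamma\langle T_\gamma,\Lambda_\gamma\rangle=\int_{\T^d}\operatorname{tr}\big(T(z)\,d\mu(z)\big)\ge\delta\,\operatorname{tr}\mu(\T^d)>0$ because $T(z)\ge\delta I$, contradicting $L_D(T)<0$. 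The step I expect to be the main obstacle is precisely this passage from truncated data to a genuine representing measure: for $d\ge 2$ the \emph{truncated} matrix trigonometric moment problem on $\T^d$ need not be solvable at a fixed $D$, so one cannot stop at finite $D$ and must take $D\to\infty$, relying on the normalization to keep the limit functional from degenerating, after which the \emph{full} trigonometric moment problem --- which is always solvable --- closes the argument. (Dritschel in fact proves the more general operator-coefficient statement, where finite-dimensional separation and Herglotz--Bochner must be replaced by operator-valued substitutes, which is where the real work lies.)
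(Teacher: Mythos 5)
Your argument is correct, but it is a genuinely different proof from the one in the paper. The paper's proof (Subsection \ref{Fejerproofs}) is elementary and nearly constructive: it introduces the multivariable Ces\`aro operators $C_n$, restricts them to the finite-dimensional space $\mcL_m$ of Laurent polynomials of coordinatewise degree at most $m$, notes that $C^m_n\to I$ so $(C^m_n)^{-1}$ exists and tends to the identity, shows via the quadrature identity for the Fej\'er kernel that $C_nL$ is a sum of squares whenever $L\in\mcL_m$ is pointwise positive semi-definite on $\T^d$, and then uses strict positivity only to guarantee that $T_n=(C^m_n)^{-1}T$ is still positive for large $n$, so that $T=C_nT_n$ is a sum of squares with explicit degree control coming from $n$. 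Your route is the dual, nonconstructive one: separation of $T$ from the closed cone $\mathcal C_D$ at every degree $D$, encoding of the separating functionals as truncated matrix moment sequences, a normalization-and-diagonal-subsequence limit, and the matrix-valued Herglotz--Bochner theorem on $\Z^d$/$\T^d$ to produce a representing measure, with strict positivity entering only at the final contradiction $\int\operatorname{tr}(T\,d\mu)\ge\delta\operatorname{tr}\mu(\T^d)>0$. You correctly identify the crucial point that the truncated moment problem may fail while the full one always has a solution by Bochner. What each buys: the paper's argument avoids Hahn--Banach and measure theory entirely, stays in finite dimensions, and in principle yields quantitative degree bounds depending on $\delta$; yours requires the full matrix Bochner theorem and gives no bound on $D$, but it is the standard Positivstellensatz scheme and extends more readily to other settings. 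One small point to tighten: closedness of $\mathcal C_D$ needs $0\notin\operatorname{conv}\{P^*P:\|P\|=1\}$, not merely $0\notin\{P^*P:\|P\|=1\}$; this follows in one line by applying the functional $S\mapsto\int_{\T^d}\operatorname{tr}S\,d\sigma$, which equals $\|P\|^2=1$ on every generator.
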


We sketch a simple proof with some new elements in the appendix; see Subsection \ref{Fejerproofs}.

\begin{lemma} \label{polytfr}
If $P:\D^d \to \C^{M\times N}$ is a matrix polynomial
such that $\|P\|_{\D^d}<1$ then there exists
a matrix polynomial $A$ such that 
$\begin{pmatrix} P \\ A\end{pmatrix}$ 
is iso-inner.
If $d=1,2$, then $P$ has a 
finite contractive TFR.
\end{lemma}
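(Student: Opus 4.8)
The plan is to first produce an iso-inner polynomial dilation $\begin{pmatrix} P \\ A \end{pmatrix}$ of $P$ using Dritschel's Fej\'er--Riesz theorem, and then feed this dilation into the appropriate finite transfer function realization result according to the value of $d$.

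First I would note that since $\|P\|_{\D^d} < 1$ and $\T^d$ is compact, there is $\delta > 0$ with $I - P(z)^*P(z) \geq \delta I$ for all $z \in \T^d$. Writing $P(z) = \sum_\alpha P_\alpha z^\alpha$ and using $\bar z_j = z_j^{-1}$ on $\T^d$, the function $T(z) := I - P(z)^*P(z)$ is a matrix-valued Laurent polynomial which is self-adjoint and bounded below by $\delta I$ on $\T^d$. Theorem \ref{dritschel1} then yields a matrix polynomial $A$ with $T = A^*A$ on $\T^d$, that is, $I = P(z)^*P(z) + A(z)^*A(z)$ on $\T^d$, which says exactly that $\begin{pmatrix} P \\ A \end{pmatrix}$ is isometric on $\T^d$. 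Since $\begin{pmatrix} P \\ A \end{pmatrix}$ is a polynomial it has no poles in $\D^d$, so it is iso-inner, proving the first assertion (and this part works for all $d$).

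For the second assertion, suppose $d = 1$ or $d = 2$. When $d = 1$, Theorem \ref{onevarthmdetailed} applied to the iso-inner function $\begin{pmatrix} P \\ A \end{pmatrix}$ (with denominator $p \equiv 1$) produces a finite isometric TFR for it; when $d = 2$, the Main Theorem (Theorem \ref{mainthm}) gives the same conclusion, since the hypotheses there are met with empty denominator zero set. In either case $P$ is a submatrix of $\begin{pmatrix} P \\ A \end{pmatrix}$, so the converse half of Proposition \ref{partthm} shows $P$ itself has a finite contractive TFR (indeed with the same size breakdown).

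I do not expect any serious obstacle: all the content sits in the cited results, with Dritschel's theorem doing the essential work in the first step. The restriction to $d \in \{1,2\}$ in the second assertion is forced only by the availability of the finite realization theorems (Theorem \ref{onevarthmdetailed} and Theorem \ref{mainthm}); the dilation statement has no such restriction.
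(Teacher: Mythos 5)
Your proof is correct and takes essentially the same route as the paper: factor $I-P^*P$ on $\T^d$ via Dritschel's Theorem \ref{dritschel1} to build the iso-inner dilation $\begin{pmatrix} P \\ A\end{pmatrix}$, then obtain a finite isometric TFR for it and pass to the submatrix $P$ via Proposition \ref{partthm}. The only cosmetic difference is that you cite Theorem \ref{onevarthmdetailed} separately for $d=1$, whereas the paper invokes Theorem \ref{mainthm} for both $d=1,2$.
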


\begin{proof}
On $\T^d$, $I-P^*P$ is a positive definite
matrix Laurent polynomial.  
By Theorem \ref{dritschel1} we
can factor $I-P^*P = A^*A$.  
Then, $S = \begin{pmatrix} P \\ A \end{pmatrix}$
is isometry-valued on $\T^d$.
If $d=1,2$, then $S$ has a finite isometric TFR
by Theorem \ref{mainthm}
and hence $P$ possesses a finite
contractive TFR by Proposition \ref{partthm}.
\end{proof}

Positive semi-definite kernels are defined in Definition \ref{def:PSD}.
Notice that an expression of the form $F(w)^*F(z)$ will
always be positive semi-definite.  By the above
lemma and Theorem \ref{equivalences}, any matrix
polynomial $P\in \C^{M\times N}[z_1,z_2]$ with $\|P\|_{\D^2}<1$
will satisfy a formula of the form
\[
I-P(w)^*P(z) = k_0(w,z) + \sum_{j=1}^{2} (1-\bar{w}_j z_j) k_j(w,z)
\]
where $k_0,k_1,k_2$ are positive semi-definite kernels.
The term $k_0$ can be absorbed into $k_1$ since 
\[
\frac{k_0(w,z)}{1-\bar{w}_1 z_1}
\]
is positive semi-definite by the Schur product theorem.
Thus, the following corollary holds for such strictly contractive
matrix polynomials in two variables.  Such formulas
are called \emph{Agler decompositions}.

\begin{corollary}\label{maincor}
Let $f:\D^2 \to \C^{M\times N}$ be holomorphic with $\|f(z)\|\leq 1$ for $z\in \D^2$.  Then, there exist
positive semi-definite kernels $k_1,k_2:\D^2\times \D^2 \to \C^{N\times N}$ such that
\[
I-f(w)^*f(z) = \sum_{j=1}^{2} (1-\bar{w}_j z_j) k_j(w,z).
\]
\end{corollary}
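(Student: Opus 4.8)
The plan is to combine the polynomial approximation results with a normal families argument to pass from strictly contractive matrix polynomials to an arbitrary Schur function $f$. First I would dispose of the boundary-touching case: if $\|f(z_0)\| = 1$ for some $z_0 \in \D^2$, Lemma \ref{Schurbreakup} lets me write $U_1 f U_2 = E \oplus g$ for a constant unitary $E$ and a holomorphic $g$ with $\|g\|_{\D^2} < 1$; since a constant unitary trivially has an Agler decomposition (take $k_1 = k_2 = 0$) and Agler decompositions behave well under direct sums and under left/right multiplication by constant unitaries, it suffices to treat $g$. So I may assume $\|f(z)\| < 1$ for all $z \in \D^2$ from the outset.

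Next, by Lemma \ref{Rudinapprox}, there is a sequence of matrix polynomials $P_n \in \C^{M\times N}[z_1,z_2]$ with $\|P_n\|_{\D^2} < 1$ converging to $f$ locally uniformly on $\D^2$. For each $n$, the discussion immediately preceding the corollary (using Lemma \ref{polytfr} to get $I - P_n^*P_n = A_n^*A_n$ on $\T^2$, then Theorem \ref{mainthm} to get a finite isometric TFR, then the equivalence (1)$\iff$(3) with $G = 0$ from Theorem \ref{equivalences}, and finally absorbing the constant kernel $k_0$ into $k_1$ via the Schur product theorem) produces positive semi-definite kernels $k_1^{(n)}, k_2^{(n)} : \D^2 \times \D^2 \to \C^{N\times N}$ with
\[
I - P_n(w)^* P_n(z) = \sum_{j=1}^{2} (1 - \bar{w}_j z_j)\, k_j^{(n)}(w,z).
\]
The remaining task is to extract a limiting decomposition for $f$.

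The main obstacle is the passage to the limit: the kernels $k_j^{(n)}$ are not a priori uniformly bounded on compact subsets, so I cannot directly quote a normal families theorem. The standard fix, which I would carry out, is to normalize. Restricting to a closed polydisk $r\overline{\D^2}$, plugging $w = z$ into the identity shows each $\sum_j (1-|z_j|^2) k_j^{(n)}(z,z) = I - P_n(z)^*P_n(z)$ is uniformly bounded there, and since $1 - |z_j|^2 \ge 1 - r^2 > 0$, each $k_j^{(n)}(z,z)$ is uniformly bounded on $r\overline{\D^2}$; a Cauchy--Schwarz estimate for positive semi-definite kernels (namely $\|k_j^{(n)}(w,z)\| \le \|k_j^{(n)}(w,w)\|^{1/2}\|k_j^{(n)}(z,z)\|^{1/2}$) then bounds the kernels uniformly on $r\overline{\D^2} \times r\overline{\D^2}$. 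Passing to a subsequence and applying a diagonal argument over $r \uparrow 1$, I obtain kernels $k_j$ on $\D^2 \times \D^2$ as pointwise (indeed locally uniform) limits; positive semi-definiteness is preserved under pointwise limits, and the identity for $f$ follows by letting $n \to \infty$ in the displayed equation using $P_n \to f$ locally uniformly. One could alternatively avoid the normal families step entirely by using the finite isometric TFRs directly: each $P_n$ has a realization $V_n = \begin{pmatrix} A_n & B_n \\ C_n & D_n\end{pmatrix}$ which is a \emph{contraction} on a finite-dimensional space, and after compressing/padding one can also run a weak-$*$ limit argument on the operators $V_n$; but the kernel-normalization route is the most self-contained and is the one I would write up.
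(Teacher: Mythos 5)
Your proposal is correct and follows essentially the same route as the paper's sketch: reduce to the strictly contractive case via Lemma \ref{Schurbreakup}, approximate by strictly contractive matrix polynomials (Lemma \ref{Rudinapprox}), invoke the Agler decompositions for such polynomials from the preceding discussion, and pass to the limit by bounding the kernels locally (the Cole--Wermer normal-families step), where your diagonal Cauchy--Schwarz normalization is exactly the estimate the paper displays. The only detail worth making explicit in a write-up is that extracting a locally uniformly convergent subsequence uses that each $k_j^{(n)}(w,z)$ is holomorphic in $z$ and anti-holomorphic in $w$, so local boundedness gives normality by Montel's theorem.
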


\begin{proof}[Sketch of Proof]
The hard work has already been
done while the general outline
and some technicalities are essentially 
in \cite{CW99} so we only sketch the proof.
We can assume that $f$ is point-wise strictly contractive by Lemma 
\ref{Schurbreakup}.
Then, $f$ is a local uniform limit of matrix
polynomials with supremum norm strictly less than one by Lemma
\ref{Rudinapprox}.
Each of these possesses an Agler decomposition by
the discussion above.

The final part of the argument is the piece found in \cite{CW99}.
The kernels in the Agler decomposition are locally bounded because of the
estimate
\[
\frac{1}{(1-|z_1|^2)(1-|z_2|^2)} I \geq \frac{I-f(z)^*f(z)}{(1-|z_1|^2)(1-|z_2|^2)}
\geq \frac{k_1(z,z)}{1-|z_2|^2} \geq k_1(z,z).
\]
This shows the kernels in Agler decompositions form
a normal family.
Subsequences converge locally uniformly
to form positive semi-definite kernels in an Agler decomposition for $f$.
\end{proof}

The above corollary proves Theorem \ref{Aglerthm}.
The proof is essentially the same as $(3)\implies (1)$
in the equivalences theorem (Thm \ref{equivalences})
since positive semi-definite kernels can be factored as $F(w)^*F(z)$
for some possibly operator valued function $F$.
Readers who have ventured this far 
(and are not in the cognoscenti of
this material) may benefit from
some context at this point.
The fundamental contribution of Agler
can perhaps be encapsulated in 
the following result.

\begin{theorem}[Agler \cites{Agler1, Agler2}] 
\label{Aglerthmlong}
Let $f:\D^d \to \C^{M\times N}$ be holomorphic.
Assume $\|f(z)\|\leq 1$ for $z\in \D^d$.
Then, the following are equivalent.
\begin{enumerate}
\item $f$ satisfies a von Neumann inequality:
\[
\|f(T)\| = \| (f_{j,k}(T))_{j,k}\| \leq 1
\]
for every 
$d$-tuple $T=(T_1,\dots,T_d)$ of pairwise
commuting strictly contractive operators (on
some underlying Hilbert space);

\item $f$ has an Agler decomposition:
there exist positive semi-definite kernels 
$k_1, \dots, k_d:\D^d\times \D^d \to \C^{N\times N}$
such that
\[
I-f(w)^*f(z) = \sum_{j=1}^{d} (1-\bar{w}_j z_j) k_j(w,z);
\]
\item $f$ has a contractive transfer function 
realization:
there exists a contractive operator with block
decomposition
$T = \begin{pmatrix} A& B \\ C & D\end{pmatrix}$
on some Hilbert space
such that
\[
f(z) = A + B \Delta(z) (I-D\Delta(z))^{-1} C
\]
where $\Delta(z) = \sum_{j=1}^{d} z_j P_j$ and the $P_j$ 
are pairwise orthogonal orthogonal projections which
sum to the identity on the domain of $D$.
\end{enumerate}
\end{theorem}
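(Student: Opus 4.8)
The plan is to prove Theorem~\ref{Aglerthmlong} by going around the cycle $(3)\Rightarrow(1)\Rightarrow(2)\Rightarrow(3)$. Note at the outset that the theorem asserts only the \emph{equivalence} of (1)--(3) --- for $d\geq 3$ all three can fail --- and that for $d=2$ the conclusion is already supplied unconditionally by Corollary~\ref{maincor}; so the new content for general $d$ is the equivalence itself, and the argument for it is essentially Agler's.

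The implications $(2)\Leftrightarrow(3)$ are the operator-valued version of the lurking-isometry bookkeeping already carried out for finite matrices in Theorem~\ref{equivalences}, so I would just redo that argument over infinite-dimensional Hilbert spaces. For $(2)\Rightarrow(3)$: factor each kernel of the Agler decomposition as a Gram form $k_j(w,z)=F_j(w)^{*}F_j(z)$ with $F_j:\D^d\to B(\C^N,\mathcal{D}_j)$ for Hilbert spaces $\mathcal{D}_j$; set $F=\bigoplus_j F_j$, $\mathcal{D}=\bigoplus_j\mathcal{D}_j$, let $P_j$ be the projection of $\mathcal{D}$ onto $\mathcal{D}_j$, and $\Delta(z)=\sum_j z_jP_j$. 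The decomposition then rearranges into the Gram identity
\[
\begin{pmatrix} I \\ \Delta(w)F(w)\end{pmatrix}^{*}\begin{pmatrix} I \\ \Delta(z)F(z)\end{pmatrix}=\begin{pmatrix} f(w) \\ F(w)\end{pmatrix}^{*}\begin{pmatrix} f(z) \\ F(z)\end{pmatrix},
\]
so $\binom{I}{\Delta(z)F(z)}c\mapsto\binom{f(z)}{F(z)}c$ (for $c\in\C^N$, $z\in\D^d$) extends to an isometry $V$ between closed spans; extending $V$ by $0$ on the orthogonal complement of its domain produces a contraction $T=\begin{pmatrix}A&B\\C&D\end{pmatrix}$ with $T\binom{I}{\Delta(z)F(z)}=\binom{f(z)}{F(z)}$, and reading off the two block rows gives $F=(I-D\Delta)^{-1}C$ (invertible since $\|D\Delta(z)\|\leq\max_j|z_j|<1$) and $f=A+B\Delta(I-D\Delta)^{-1}C$. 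For $(3)\Rightarrow(2)$ I would run this in reverse: with $F=(I-D\Delta)^{-1}C$, the contractivity $T^{*}T\leq I$ gives
\[
I-f(w)^{*}f(z)=\sum_{j}(1-\bar w_jz_j)\,(P_jF)(w)^{*}(P_jF)(z)+k_0(w,z),
\]
$k_0$ being the positive defect from $I-T^{*}T$, which I absorb into the $j=1$ term by dividing by $1-\bar w_1z_1$ (a Schur product with the Szeg\H{o} kernel), exactly as after Lemma~\ref{polytfr}.

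For $(3)\Rightarrow(1)$: given a contractive colligation $T$ realizing $f$ and a commuting tuple $\underline T=(T_1,\dots,T_d)$ of strict contractions on a Hilbert space $\mcH$, I would set $\Delta(\underline T)=\sum_j T_j\otimes P_j$ on $\mcH\otimes\mathcal{D}$. The subspaces $\mcH\otimes\mathcal{D}_j$ are orthogonal and reducing for $\Delta(\underline T)$, on which it acts as $T_j\otimes I$, so $\|\Delta(\underline T)\|=\max_j\|T_j\|<1$; hence $I-(D\otimes I)\Delta(\underline T)$ is invertible and $f(\underline T):=(A\otimes I)+(B\otimes I)\Delta(\underline T)\bigl(I-(D\otimes I)\Delta(\underline T)\bigr)^{-1}(C\otimes I)$ is well defined. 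Expanding the Neumann series and matching it against the Taylor expansion of $f$ (valid since $\max_j\|T_j\|<1$) identifies this with $(f_{jk}(\underline T))_{jk}$, and the colligation identity $I-f(\underline T)^{*}f(\underline T)=G^{*}\bigl(I-\Delta(\underline T)^{*}\Delta(\underline T)\bigr)G+(\text{positive term})$, with $G=\bigl(I-(D\otimes I)\Delta(\underline T)\bigr)^{-1}(C\otimes I)$, forces $\|f(\underline T)\|\leq1$.

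The hard part will be $(1)\Rightarrow(2)$, a Hahn--Banach separation argument in the spirit of Agler and Cole--Wermer \cite{CW99}. I would work in a locally convex space of Hermitian $\C^{N\times N}$-valued functions on $\D^d\times\D^d$ (say, with uniform convergence on compacta) and take $\mathcal C$ to be the closure of the convex cone of sums $\sum_{j=1}^{d}(1-\bar w_jz_j)k_j(w,z)$ with each $k_j$ a positive semi-definite kernel. If $I-f(w)^{*}f(z)\notin\mathcal C$, separate it by a continuous real-linear functional $L$ with $L\geq0$ on $\mathcal C$ and $L(I-f(w)^{*}f(z))<0$; a GNS-type construction converts $L$ into a Hilbert space carrying commuting contractions (multiplication by the coordinate functions), which after the rescaling $\underline T\mapsto r\underline T$, $r<1$, become strict contractions, and tracing $L(I-f^{*}f)<0$ through the construction exhibits a vector at which $\|f(r\underline T)\|>1$, contradicting (1). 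Thus $I-f(w)^{*}f(z)\in\mathcal C$, and the a priori bound
\[
k_1(z,z)\leq\frac{I-f(z)^{*}f(z)}{1-|z_1|^{2}}\leq\frac{I}{1-|z_1|^{2}}
\]
(and its analogue in the other variables) makes the approximating kernels a normal family, so a limit along a subnet yields an honest Agler decomposition --- the same normal-families step as in the proof of Corollary~\ref{maincor}. The points I expect to fight with are: pinning down the topology so that Hahn--Banach applies and the separating functional is representable by the required GNS data; executing the GNS passage with genuine commuting operators while keeping track of domains; and carrying the matrix-valued ($\C^{N\times N}$) bookkeeping through everything, which is exactly where the scalar arguments of \cite{CW99} need the mild reworking already used for Corollary~\ref{maincor}.
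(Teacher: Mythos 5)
First, a point of comparison: the paper does not actually prove Theorem \ref{Aglerthmlong}. It is stated purely as background and attributed to Agler, and the surrounding discussion explains that the paper's whole strategy is to \emph{sidestep} the implication $(1)\implies(2)$, which the author calls possibly the hardest part of the theorem and which rests on a non-constructive Hahn--Banach cone separation argument; for $(2)\implies(1)$ the paper simply points to \cite{CW99}. So there is no in-paper proof to measure your argument against; the relevant comparison is with Agler's original argument and its Cole--Wermer presentation, and your plan is precisely that classical route. Your observation that, for $d=2$, conditions (2) and (3) hold unconditionally by Corollary \ref{maincor} (this is exactly Theorem \ref{Aglerthm}) is correct, and your treatment of $(2)\iff(3)$ --- the operator-valued lurking isometry and the Gram identity, i.e.\ Theorem \ref{equivalences} redone over infinite-dimensional spaces, with the defect kernel absorbed by a Szeg\H{o}-kernel Schur product as after Lemma \ref{polytfr} --- and of $(3)\implies(1)$ --- plugging a commuting tuple of strict contractions into the colligation --- is sound and standard.

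The genuine gap is $(1)\implies(2)$, which as written is a sketch rather than a proof, and the difficulties you flag at the end are exactly where the work lies. Two of them are not cosmetic. First, separating $I-f(w)^*f(z)$ from the cone requires either proving the cone of sums $\sum_j(1-\bar w_jz_j)k_j$ is closed (not obvious in the compact-open topology on all of $\D^d\times\D^d$) or, as in Agler's and Cole--Wermer's arguments, restricting to finite subsets of the polydisk, where the a priori bound you quote yields closedness and the separation takes place in a finite-dimensional space, followed by the normal-families glueing you describe. Second, the GNS construction from the separating functional produces commuting \emph{contractions} (multiplication by the coordinates), not strict contractions, and $f$ of a non-strict tuple need not be defined; so to contradict (1) you must run a limiting argument (apply (1) to $r\underline{T}$ and show $f(r\underline{T})$ converges, on the GNS space built from the compactly supported representing functional, to multiplication by $f$), together with the matrix-valued bookkeeping you mention. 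None of this is wrongheaded --- it is the accepted proof --- but until these steps are executed, the one implication the paper deliberately avoids proving remains unproven in your write-up as well.
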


Theorem \ref{Aglerthm} was originally
proven via And\^{o}'s inequality \cite{Ando} 
which gives item (1)
above.  The approach we have given sidesteps 
the use of von Neumann's inequality and
the implication $(1)\implies (2)$ in Theorem
\ref{Aglerthmlong}.  The proof of $(1)\implies (2)$
is possibly the hardest part of the theorem
and is non-constructive as it uses
a Hahn-Banach cone separation argument.
On the other hand, $(2)\implies (1)$
is a relatively straightforward matter
 of ``plugging'' the $d$-tuple $T$ into
 the Agler decomposition in item (2)
 in an appropriate sense.
 See \cite{CW99} for details.
Ball-Sadosky-Vinnikov \cite{BSV05}
have a different way to prove
Theorem \ref{Aglerthm} directly
using multi-evolution scattering systems.
Theorem \ref{Aglerthm}'s analogue for $3$
or more variables fails because the von Neumann
inequality fails for 3 or more contractions \cite{Varo}.
Thus, Theorem \ref{Aglerthmlong} gives
the best way of demonstrating that a function
does not have a contractive TFR; namely, showing
that it fails the von Neumann inequality.  
It is probably difficult to directly show that a function
fails item (2) or (3) in Theorem \ref{Aglerthmlong}.

We conclude this section by plugging Dritschel's strong Fej\'er-Riesz
type result (stated below) into earlier arguments in order 
to show rational contractive matrix-valued functions in two variables
have a finite contractive TFR (Theorem \ref{finitetfr}).

\begin{theorem}[Dritschel \cite{mD2}] \label{dritschel2}
Let $T(z) = \sum_{j\in \Z^2} T_j z^j$ be a matrix-valued Laurent polynomial in two variables; 
i.e. $T_j \in \C^{N\times N}$ for $j\in \Z^2$ and at most finitely many $T_j \ne 0$.
If $T(z) \geq 0$ on $\T^2$, then there exists a matrix polynomial $A\in \C^{M\times N}[z_1,z_2]$
such that $T = A^*A$ on $\T^2$.
\end{theorem}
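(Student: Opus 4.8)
The plan is to deduce Theorem~\ref{dritschel2} from the strictly positive case, Theorem~\ref{dritschel1}, by a perturbation-and-compactness argument, isolating one genuinely two-variable ingredient. Say $T$ has degree at most $(n_1,n_2)$, i.e. $T(z)=\sum_{|j_1|\le n_1,\ |j_2|\le n_2} T_{j_1j_2}z_1^{j_1}z_2^{j_2}$, and put $M_T=\sup_{\T^2}\|T\|<\infty$. For $\epsilon\in(0,1]$ the Laurent polynomial $T_\epsilon:=T+\epsilon I$ satisfies $T_\epsilon\ge \epsilon I$ on $\T^2$, so Theorem~\ref{dritschel1} supplies a matrix polynomial $A_\epsilon$ with $T_\epsilon=A_\epsilon^*A_\epsilon$ on $\T^2$. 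On $\T^2$ one then has $\|A_\epsilon(z)\|^2=\|A_\epsilon(z)^*A_\epsilon(z)\|=\|T_\epsilon(z)\|\le M_T+1$, a bound independent of $\epsilon$. Hence, \emph{provided} the $A_\epsilon$ can be chosen of degree bounded uniformly in $\epsilon$ (and then, after replacing $A_\epsilon$ by a matrix with at most $(n_1+1)(n_2+1)N$ rows without changing $A_\epsilon^*A_\epsilon$, of uniformly bounded inner dimension), every Fourier coefficient of every $A_\epsilon$ lies in a single compact subset of a fixed finite-dimensional matrix space. A diagonal subsequence $A_{\epsilon_k}$ then converges coefficientwise to a matrix polynomial $A$, and letting $k\to\infty$ in $T_{\epsilon_k}=A_{\epsilon_k}^*A_{\epsilon_k}$ yields $T=A^*A$ on $\T^2$.

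So everything reduces to one assertion: \emph{the spectral factors $A_\epsilon$ may be taken of degree bounded independently of $\epsilon$}, and one expects the bound $\deg A_\epsilon\le (n_1,n_2)$. This is exactly where $d=2$ enters essentially. Theorem~\ref{dritschel1} by itself gives no control on the degree of the factor as $\delta\downarrow 0$, and the corresponding uniform degree bound fails for $d\ge 3$, so no purely soft argument can close the gap in general. To produce the bound I would slice in $z_1$: for each $z_2\in\T$ the one-variable matrix Laurent polynomial $z_1\mapsto T_\epsilon(z_1,z_2)$ is $\ge\epsilon I$ on $\T$, and the matrix Fej\'er--Riesz theorem (Theorem~\ref{genFR}) factors it with a spectral factor of $z_1$-degree at most $n_1$, depending on $z_2$ through the trigonometric-polynomial coefficients of $T_\epsilon(\cdot,z_2)$. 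The two-variable content, in the spirit of Geronimo--Woerdeman in the scalar case and Dritschel in the matrix case, is that these $z_1$-slice factorizations can be organized into a single polynomial in $(z_1,z_2)$ whose $z_2$-degree is also controlled by $n_2$; this amounts to the same parametrized one-variable bookkeeping as Kummert's construction in the proof of Theorem~\ref{mainthm}, carried out with the degree kept under control.

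The main obstacle is precisely this assembly, and within it the real difficulty is the degeneration of joint strict positivity: although $T_\epsilon>0$ for every $\epsilon>0$, its positivity constant tends to $0$, and along the way $\det T_\epsilon(\cdot,z_2)$ can come arbitrarily close to vanishing for some $z_2$, so the $z_1$-slice spectral factors, and the ``constant in $z_1$'' unitaries relating competing factorizations of a single slice, behave badly as $\epsilon\to0$. Forcing the assembled factor to retain bounded degree through this degeneration is where the two-dimensional geometry of $\D^2$ is indispensable, and it is the heart of Dritschel's theorem; once it is in hand, the compactness argument of the first paragraph finishes the proof.
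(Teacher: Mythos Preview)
The paper does not prove this theorem; it is quoted as Dritschel's result \cite{mD2} and the paper explicitly remarks that it is ``considerably deeper'' than the strictly positive case (Theorem~\ref{dritschel1}), for which the paper does give a sketch. So there is no proof in the paper to compare against, and your proposal must stand on its own.

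It does not. Your argument is openly circular: you reduce Theorem~\ref{dritschel2} to the assertion that the spectral factors $A_\epsilon$ of $T+\epsilon I$ can be chosen with degree bounded independently of $\epsilon$, and then say that establishing this bound ``is the heart of Dritschel's theorem.'' That is precisely the statement to be proven; you have only rewritten it. The perturbation-and-compactness shell is fine and standard, but it is also the easy part.

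The sketch you offer for the degree bound does not close the gap either. Slicing in $z_1$ and applying one-variable Fej\'er--Riesz gives, for each $z_2\in\T$, a factor of $z_1$-degree at most $n_1$, but there is no mechanism in your outline that makes the dependence on $z_2$ polynomial of controlled degree. The Kummert construction you invoke (Section~\ref{sec:twovar}) does produce a polynomial object from a parametrized factorization, but it does so by passing through a transfer-function realization whose $z_2$-size is governed by the degree of the rational iso-inner function $U(z_2)$ in \eqref{Uformula}; nothing in that argument bounds this degree in terms of $n_2$ alone, and certainly not uniformly as $\epsilon\downarrow 0$ when the outer factor $A_0(z_2)$ degenerates. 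Your own final paragraph identifies exactly this degeneration as the obstacle and then stops. Moreover, the expectation $\deg A_\epsilon\le(n_1,n_2)$ is optimistic: even in the scalar case, degree bounds for sums-of-squares certificates on $\T^2$ are delicate, and there is no reason the minimal-degree factor of $T+\epsilon I$ should have degree independent of $\epsilon$. What survives of your write-up is a correct identification of where the difficulty lies, not a proof.
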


This theorem is considerably deeper than Theorem \ref{dritschel1}, 
and both theorems also apply
to operator-valued functions.
An earlier sums of squares theorem of Scheiderer, which 
applied to polynomials on a much more general class of
two dimensional domains (than simply $\T^2$),
implies 
Theorem \ref{dritschel2} in the scalar case \cite{Scheiderer}.

\begin{proof}[Proof of Theorem \ref{finitetfr}]
Apply the proof of Proposition \ref{onevarfinitetfr}
with Theorem \ref{dritschel2} in place of Theorem \ref{genFR}.
\end{proof}

\section{More on finite TFRs}\label{moretfr}
We need to collect one more fact about finite-dimensional TFRs 
before proving the minimality theorem.
If we have
an Agler decomposition of an iso-inner
function $S=Q/p$
written in lowest terms, then the 
sums of squares terms are rational 
with denominator $p$.

\begin{theorem}\label{polydecomps}
Suppose $S:\D^d\to \C^{M\times N}$ is
rational and iso-inner.  
Write $S = Q/p$ in lowest terms with
 $Q \in \C^{M\times N} [z_1,\dots,z_d]$ and 
$p\in \C[z_1,\dots, z_d]$.  
Suppose we have an Agler decomposition
\[
I_N - S(w)^*S(z) = \sum_{j=1}^{d} (1-\bar{w}_j z_j) F_j(w)^*F_j(z)
\]
where the $F_j$ are matrix functions.
Then, for $j=1,\dots, d$, $p(z)F_j(z)$ 
is a matrix polynomial.
\end{theorem}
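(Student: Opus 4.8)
My plan is to reduce to a polynomial identity, settle the one–variable case by a rigidity argument, push it down to one–variable slices, and close with a divisibility computation; the two–variable case will be the delicate one.

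First I would multiply the Agler decomposition through by $\overline{p(w)}p(z)$ and set $G_j := p F_j$, so that it becomes the polynomial identity
\[
\overline{p(w)}p(z)I_N - Q(w)^*Q(z) = \sum_{j=1}^d (1-\bar w_j z_j) G_j(w)^* G_j(z).
\]
The given $F_j$ are, by the lurking–isometry implication $(3)\Rightarrow(2)$ of Theorem \ref{equivalences} applied with $G=0$ (bonus B3) together with $(2)\Rightarrow(1)$, realized as the state components $F=(I-D\Delta)^{-1}C$ of a finite isometric TFR built on this \emph{same} $F=(F_1,\dots,F_d)$; in particular each $F_j$, hence each $G_j$, is a rational matrix function with no poles in $\D^d$. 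Write $G_j=\Gamma_j/\delta_j$ in lowest terms with $\delta_j$ a scalar polynomial having no zeros in $\D^d$; the whole game is to show each $\delta_j$ is constant.

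When $d=1$ this is fast: setting $w=z\in\D$ and using $\|S(z)\|\le 1$ on $\D$ shows $(1-|z|^2)G(z)^*G(z)=|p(z)|^2I-Q(z)^*Q(z)$ is finite on $\D$, so $G$ has no poles there; Theorem \ref{onevarthmdetailed} gives a matrix polynomial $\hat F$ of degree $\le n-1$ with $\overline{p(w)}p(z)I-Q(w)^*Q(z)=(1-\bar w z)\hat F(w)^*\hat F(z)$, so after cancelling $1-\bar wz$ we get $G(w)^*G(z)=\hat F(w)^*\hat F(z)$ on $\D\times\D$; since a function is determined up to a fixed isometric left factor by its kernel, $G=V\hat F$ for a constant $V$, so $G$ is a polynomial. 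For $d\ge 2$ I would induct on $d$. Fixing generic $\tau_i\in\T$ and letting $z_i=w_i\to\tau_i$ in the displayed identity kills the $i$-th summand (the factor $1-\bar w_i z_i\to 0$ multiplies the finite limit of $G_i$), leaving a $(d-1)$-variable Agler decomposition of the rational iso-inner slice $S|_{z_i=\tau_i}$; here I need the Hurwitz argument of Lemma \ref{Hurwitzlemma} to know $p|_{z_i=\tau_i}$ has no zeros in $\D^{d-1}$, plus a little care to absorb any common factors created by the specialization. By the inductive hypothesis $G_j|_{z_i=\tau_i}$ is then a matrix polynomial of uniformly bounded degree for each $j\ne i$ and generic $\tau_i$, and a resultant/gcd–specialization argument forces $\delta_j|_{z_i=\tau_i}$ to be simultaneously coprime to and a divisor of $\Gamma_j|_{z_i=\tau_i}$ for generic $\tau_i$, hence constant; thus $\delta_j$ depends only on the single variable $z_i$. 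For $d\ge 3$, two distinct choices of $i\ne j$ then make $\delta_j$ constant.

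The remaining case $d=2$ is where I expect the real work. There the previous step only gives $\delta_1=\delta_1(z_2)$ and $\delta_2=\delta_2(z_1)$, so at least $\delta_1,\delta_2$ are coprime. I would extend the displayed identity to a rational identity in independent variables $(\zeta,z)$ (replacing $\bar w$ by $\zeta$), multiply through by $\delta_2^*(\zeta_1)\delta_2(z_1)$, and note that the left side and the $G_2$-summand become polynomials, so that
\[
\frac{(1-\zeta_1 z_1)\,\delta_2^*(\zeta_1)\delta_2(z_1)\,\Gamma_1^*(\zeta)\Gamma_1(z)}{\delta_1^*(\zeta_2)\delta_1(z_2)}
\]
is a polynomial; equivalently $\delta_1^*(\zeta_2)\delta_1(z_2)$ divides $(1-\zeta_1 z_1)\delta_2^*(\zeta_1)\delta_2(z_1)\Gamma_1^*(\zeta)\Gamma_1(z)$ in the UFD $\C[\zeta,z]$. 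Every irreducible factor of $\delta_1(z_2)$ involves only $z_2$, while $1-\zeta_1 z_1$, $\delta_2^*(\zeta_1)$, $\delta_2(z_1)$ and $\Gamma_1^*(\zeta)$ do not, so each such factor must divide $\Gamma_1^*(\zeta)\Gamma_1(z)$; and since $\bigcap_\zeta\ker\Gamma_1^*(\zeta)$ is the orthogonal complement of the column space of $\Gamma_1$ whereas every column of $\Gamma_1$ lies in that column space, the factor is forced to divide $\Gamma_1(z)$ itself (iterating to handle multiplicities). Hence $\delta_1\mid\Gamma_1$, which contradicts lowest terms unless $\delta_1$ is constant, and symmetrically for $\delta_2$. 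The main obstacles are this $d=2$ endgame and, within the induction, the Hurwitz/boundary bookkeeping needed to pass to the slice limits cleanly; everything else is routine once the one–variable rigidity is in place.
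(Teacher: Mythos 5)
Your proposal is correct, but it follows a genuinely different route from the paper. The paper's proof is a single computation, uniform in $d$ and in the index $j$: with $H_j=pF_j$ and $H=(H_1,\dots,H_d)^t$, restrict to diagonal slices $z=\zeta\tau$, $w=\eta\tau$ with $\tau\in\T^d$; the iso-inner identity makes $\overline{p(\eta\tau)}p(\zeta\tau)I-Q(\eta\tau)^*Q(\zeta\tau)$ divisible by $1-\bar\eta\zeta$, so $H(\eta\tau)^*H(\zeta\tau)$ is a polynomial in $\zeta,\bar\eta$ of uniformly bounded degree, and comparing with the homogeneous expansion $H=\sum_k P_k$ forces $P_k(\tau)^*P_k(\tau)\equiv 0$ on $\T^d$ for large $k$, hence $P_k\equiv 0$ and $H$ is a polynomial -- no induction on $d$, no Fej\'er--Riesz-based one-variable rigidity, and no lowest-terms bookkeeping. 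Your route instead slices coordinatewise at generic $\tau_i\in\T$, runs an induction using the one-variable uniqueness statement $G=V\hat F$ (which is right), and closes $d=2$ with a polarization/divisibility endgame; that endgame is sound and can even be streamlined, since an irreducible factor of $\delta_1(z_2)$ is linear, say $z_2-a$, and specializing $\bar\zeta=(z_1,a)$ in $\Gamma_1^*(\zeta)\Gamma_1(z)\equiv 0 \bmod (z_2-a)$ gives $\Gamma_1(z_1,a)^*\Gamma_1(z_1,a)=0$, i.e.\ $(z_2-a)\mid\Gamma_1$, immediately contradicting lowest terms. The price of your approach is the technical debt you flag: Hurwitz bookkeeping for the boundary slices, the fact that the reduced denominator of a function holomorphic on $\D^d$ is zero-free there, absorption of common factors created by specialization, and the (true but not free) fact that coprimality of $\delta_j$ with the entries of $\Gamma_j$ survives the specialization $z_i=\tau_i$ off a finite set -- none of which the paper's diagonal-slice/homogeneous-expansion trick requires. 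In exchange your argument gives slice-wise structural information and an explicit algebraic mechanism in two variables, while the paper's proof is shorter and handles all variables and all terms of the decomposition at once.
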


The significance of this theorem
is that although $S$ has a TFR
with denominator $\det(I-D\Delta(z))$,
this polynomial may not be the lowest
degree denominator of $S$.  

\begin{proof}
By Theorem \ref{equivalences} we already
see that each $F_j$ is rational and
holomorphic in $\D^d$.  To prove that $H_j := p F_j$
is a matrix polynomial consider
\[
\overline{p(w)}p(z) I_N- Q(w)^* Q(z) = \sum_{j=1}^{d} (1-\bar{w}_j z_j) H_j(w)^* H_j(z).
\]
Fix $\tau \in \T^d$ and set $z = \zeta \tau, w = \eta \tau$ for $\zeta, \eta \in \D$.
Then
\[
\overline{p(\eta \tau)} p(\zeta \tau) I_N - Q(\eta \tau)^* Q(\zeta \tau)
 = (1-\bar{\eta} \zeta) \sum_{j=1}^{d} H_j(\eta \tau)^* H_j(\zeta \tau).
 \]
 Because $S^*S = I_N$ on $\T^d$, the left hand side
 above is divisible by $(1-\bar{\eta} \zeta)$ and therefore
 \[
 \sum_{j=1}^{d} H_j(\eta \tau)^* H_j(\zeta \tau)
 \]
 is a polynomial in $\zeta, \bar{\eta}$ of degree in each less than the total degree
 of $p$ and $Q$.
 For simplicity we can regroup $\sum_{j=1}^d H_j(w)^* H_j(z) = H(w)^*H(z)$
 where now $H(\eta \tau)^* H(\zeta \tau)$ is a polynomial in $\zeta,\bar{\eta}$ for
 every $\tau\in \T^d$.  If we write out the homogeneous expansion of $H$,
 \[
 H(z) = \sum_{j=0}^{\infty} P_j(z)
 \]
 we see that 
 \[
 H(\eta \tau)^* H(\zeta \tau) = \sum_{j,k} \bar{\eta}^j \zeta^k P_j(\tau)^* P_k(\tau).
 \]
 In particular, 
 for $j$ greater than the total degrees
 of $p$ and $Q$, the coefficient of  $\bar{\eta}^j\zeta^j$
 vanishes for every $\tau$;
 namely, we have $P_j(\tau)^*P_j(\tau) \equiv 0$ for all $\tau \in \T^d$.  
 Since $P_j$ is a matrix polynomial, this implies $P_j \equiv 0$
 for $j$ greater than the total degrees of $p$ and $Q$.
 Therefore, $H$ is a polynomial.
 \end{proof}

We conclude this short section with a few asides.
The Agler norm (sometimes Schur-Agler norm)
for holomorphic $f:\D^d\to \C^{M\times N}$ is
\begin{equation} \label{Aglernorm}
\|f\|_{\mcA_d} := \sup_{T} \|f(T)\|
\end{equation}
where the supremum is taken over all
$d$-tuples $T=(T_1,\dots,T_d)$ of strictly contractive 
pairwise commuting operators on some Hilbert space.
The Agler class $\mcA_d$ consists of
 functions satisfying $\|f\|_{\mcA_d} \leq 1$. 

The argument in the proof above is 
related to the argument used to prove the
following automatic finite-dimensionality result.

\begin{theorem} \label{CWthm}
Suppose $S:\D^d\to \C^{M\times N}$
is rational, iso-inner or coiso-inner ($SS^*=I$ on $\T^d$),
and belongs to the Agler class $\mcA_d$.
Then, $S$ has a finite-dimensional isometric (resp. coisometric) TFR as in 
Theorem \ref{equivalences}.
\end{theorem}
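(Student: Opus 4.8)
The plan is to manufacture a genuinely \emph{finite}-dimensional Agler decomposition for $S$ and then feed it into the Equivalences Theorem. First I would treat the iso-inner case, writing $S=Q/p$ in lowest terms. Membership in $\mcA_d$ gives (via Theorem \ref{Aglerthmlong}, whose norm hypothesis holds since $\sup_{\D^d}\|S\|\le\|S\|_{\mcA_d}\le 1$) a contractive transfer function realization $S(z)=A+B\Delta(z)(I-D\Delta(z))^{-1}C$ on some Hilbert space. Setting $F(z)=(I-D\Delta(z))^{-1}C$, which is holomorphic on $\D^d$ by the Neumann series, the computations behind $(1)\Rightarrow(2)\Rightarrow(3)$ in Theorem \ref{equivalences} produce \emph{holomorphic, operator-valued} functions $F_1,\dots,F_d,G$ on $\D^d$ with
\[
I-S(w)^*S(z)=G(w)^*G(z)+\sum_{j=1}^{d}(1-\bar w_j z_j)F_j(w)^*F_j(z).
\]
Since $G(w)^*G(z)/(1-\bar w_1 z_1)=\sum_{k\ge 0}\bar w_1^k z_1^k\,G(w)^*G(z)$ is again a holomorphically factorable positive kernel (the Schur-product observation made just before Corollary \ref{maincor}), I can absorb $G$ into the first summand and henceforth assume $G=0$, enlarging the still-holomorphic operator-valued $F_1$.

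Next I would rerun the argument from the proof of Theorem \ref{polydecomps} essentially verbatim. Multiplying through by $\overline{p(w)}p(z)$ gives $\overline{p(w)}p(z)I-Q(w)^*Q(z)=\sum_j(1-\bar w_j z_j)H_j(w)^*H_j(z)$ with $H_j:=pF_j$ holomorphic on $\D^d$. Regrouping $\sum_j H_j^*H_j=H^*H$ and restricting to slices $z=\zeta\tau$, $w=\eta\tau$ with $\tau\in\T^d$, $\zeta,\eta\in\D$, the iso-inner relation $S^*S=I$ on $\T^d$ forces the left side to be divisible by $1-\bar\eta\zeta$, so $H(\eta\tau)^*H(\zeta\tau)$ is a polynomial in $\zeta,\bar\eta$ of degree bounded (independently of $\tau$) by the total degrees of $p$ and $Q$. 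Comparing the coefficients of $\bar\eta^j\zeta^j$ in the homogeneous expansion $H=\sum_k P_k$ shows $P_j(\tau)^*P_j(\tau)\equiv 0$ on $\T^d$, hence $P_j(\tau)=0$ for every $\tau$ and so $P_j\equiv 0$, once $j$ exceeds that bound; thus $H$, and each $H_j$, is an operator-valued polynomial of some bounded degree $D$.

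The point where finite dimensionality is actually extracted is this: an operator-valued polynomial $H(z)=\sum_{|\alpha|\le D}H_\alpha z^\alpha$ with $H_\alpha\colon\C^N\to\mcH$ takes all its values in the closed span $\mcH_0$ of the (finitely many, each at most $N$-dimensional) ranges $\operatorname{ran}H_\alpha$, so $\dim\mcH_0\le N\binom{D+d}{d}<\infty$. Composing with the projection onto $\mcH_0$ turns each $H_j$ into an honest matrix polynomial, hence each $F_j=H_j/p$ into a finite matrix-valued rational function, and we are left with a finite-dimensional Agler decomposition $I-S(w)^*S(z)=\sum_j(1-\bar w_j z_j)F_j(w)^*F_j(z)$. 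By $(3)\Rightarrow(1)$ of Theorem \ref{equivalences} with $G=0$ and bonus (B3), $S$ has a finite isometric TFR.

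For the coiso-inner case I would pass to $\breve S(z)=S(\bar z)^*$: it is rational with no poles in $\D^d$, it satisfies $\breve S^*\breve S=I$ on $\T^d$ (because $\bar z\in\T^d$ whenever $z\in\T^d$, where $SS^*=I$), and it lies in $\mcA_d$ since the algebraic identity in the proof of Proposition \ref{breve} converts a (Hilbert-space) contractive TFR for $S$ into one for $\breve S$ via $T\mapsto T^*$. The iso-inner case then gives $\breve S$ a finite isometric TFR through an isometry $U$, and Proposition \ref{breve} turns this into a finite coisometric TFR for $\breve{\breve S}=S$ through $U^*$. I expect the only delicate point to be making sure the factorizations $F_1,\dots,F_d,G$ supplied by the Agler-class hypothesis can be taken holomorphic, which is exactly why I would route through the contractive transfer function realization of Theorem \ref{Aglerthmlong}(3) rather than through an abstract positive-kernel Agler decomposition; everything after that is a bounded-degree/finite-rank bookkeeping argument modeled on Theorem \ref{polydecomps}.
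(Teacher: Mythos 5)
Your proof is correct and follows essentially the route the paper itself indicates but does not write out: use the Agler-class hypothesis (via Theorem \ref{Aglerthmlong}) to get a holomorphic, possibly infinite-dimensional factorized decomposition, run the slice/degree argument of Theorem \ref{polydecomps} to conclude each $pF_j$ is an (operator-valued) polynomial and hence has finite-dimensional range, and finish with Theorem \ref{equivalences} (bonus (B3)), treating the coiso-inner case through Proposition \ref{breve} exactly as the paper suggests. The extra touches you supply---routing through the Hilbert-space transfer function realization to guarantee holomorphic factors, and absorbing $G$ into $F_1$ by the geometric-series factorization---are sound and handle the only genuinely delicate points.
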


The essence of this theorem was first proved 
in Cole-Wermer \cite{CW99}.
Although it was only stated and proved
in the scalar case for $d=2$, the proof goes through
easily to all $d$ and for iso-inner functions.  
We gave a proof with some bounds
on degrees and the numbers of squares involved
in the scalar case in \cite{KneseRIFITSAC}.
A proof of the square
matrix-valued case is in \cite{BallKal}.
Extending to the iso-inner (non-square) case
causes no difficulties.
The coisometric 
case follows from Proposition \ref{breve}.
A proof where $S$
is assumed to be a polynomial 
is also given in \cite{wavelet}. 
The next theorem also produces
a family of functions with finite TFRs.

\begin{theorem}[Grinshpan et al \cite{Getal}]\label{Getalthm}
Suppose $S: \D^d \to \C^{M\times N}$ 
is rational, analytic on a neighborhood 
of $\overline{\D}^d$, and $\|S\|_{\mcA_d} < 1$.
Then, $S$ has a finite-dimensional contractive
TFR as in Theorem \ref{equivalences}.
\end{theorem}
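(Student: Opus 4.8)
The plan is to deduce the theorem from the multidimensional (Givone--Roesser) bounded real lemma for the Schur--Agler class. Note first that the inner-completion strategy behind Lemma \ref{polytfr} and Theorem \ref{CWthm} is unavailable here: for $d\ge 3$ a rational iso-inner function need not belong to $\mcA_d$, so one cannot simply adjoin rows to $S$ to make it inner and stay inside the Agler class. Instead I would proceed in two steps. The first step is purely algebraic: since $S$ is rational and analytic on a neighborhood of $\overline{\D^d}$, standard realization theory for rational matrix functions (e.g.\ by realizing $1/p$ and the polynomial numerator $Q$ of $S=Q/p$ separately and composing) produces a \emph{finite} matrix $\mathbf{U}=\begin{pmatrix}A&B\\C&D\end{pmatrix}$ together with orthogonal projections $P_1,\dots,P_d$ summing to the identity on the (finite-dimensional) state space such that
\[
S(z)=A+B\Delta(z)(I-D\Delta(z))^{-1}C,\qquad \Delta(z)=\sum_{j}z_jP_j,
\]
and, using analyticity on a neighborhood of $\overline{\D^d}$, one arranges that $\det(I-D\Delta(z))$ has no zeros on $\overline{\D^d}$ (its zero set can be taken inside that of $p$). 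The only thing missing relative to condition (1) of Theorem \ref{equivalences} is that $\mathbf{U}$ need not be contractive.

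The second step supplies the contractivity. Because $\|S\|_{\mcA_d}<1$, the strict bounded real lemma for the Schur--Agler class (essentially the content of \cite{Getal}) yields a \emph{block-diagonal} positive definite matrix $X=\bigoplus_j X_j$, with $X_j$ acting on $\operatorname{ran} P_j$, solving the associated Kalman--Yakubovich--Popov linear matrix inequality. Since $X^{1/2}$ is block-diagonal for the same decomposition, it commutes with $\Delta(z)$, so the state-space similarity $x\mapsto X^{1/2}x$ keeps the realization in the same form while replacing $\mathbf{U}$ by
\[
\begin{pmatrix}A&B\\C&D\end{pmatrix}\longmapsto\begin{pmatrix}A&BX^{-1/2}\\X^{1/2}C&X^{1/2}DX^{-1/2}\end{pmatrix},
\]
and the LMI says precisely that this new colligation is a contraction. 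Thus $S$ acquires a finite contractive TFR and we are done by Theorem \ref{equivalences}. Here the strict inequality $\|S\|_{\mcA_d}<1$ is what makes the (strict) LMI feasible, and the analyticity on a neighborhood of $\overline{\D^d}$ is what guarantees that the realization from step one has no spurious singularities on $\overline{\D^d}$.

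The step I expect to be the genuine obstacle is the bounded real lemma itself, i.e.\ the production of the block-diagonal $X$; this is the part I would attribute to \cite{Getal} rather than prove from scratch, but the natural route is as follows. Every Schur--Agler function admits an \emph{operator-valued} Agler decomposition (via the cone-separation argument behind $(1)\Rightarrow(2)$ of Theorem \ref{Aglerthmlong}); the task is to show it can be taken of finite rank for our $S$, which is what the LMI encodes. To obtain it with good boundary behavior I would first dilate: expanding $S(\rho\,\cdot)$ in a power series that converges absolutely on a neighborhood of $\overline{\D^d}$ and using dominated convergence, $\|S\|_{\mcA_d}<1$ forces $\sup\{\|S(W)\|:W\ \text{commuting},\ \|W_j\|<\rho\}<1$ for some $\rho>1$, so rewriting the weights of the Agler decomposition on $\rho\D^d$ gives
\[
I-S(w)^*S(z)=G(w)^*G(z)+\sum_{j=1}^{d}(1-\bar{w}_jz_j)F_j(w)^*F_j(z)
\]
with $G,F_j$ operator-valued and analytic on a neighborhood of $\overline{\D^d}$. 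Writing $S=Q/p$ and multiplying by $\overline{p(w)}p(z)$ turns the left-hand side into a polynomial, and then matching bihomogeneous components of the resulting identity in the spirit of the proof of Theorem \ref{polydecomps}, together with the rationality of $S$, is what forces $G$ and the $F_j$ down to finite rank. Once they are matrix-valued, $(3)\Rightarrow(1)$ of Theorem \ref{equivalences} again finishes the proof, and the finite-rank Agler decomposition so obtained is the LMI solution in disguise. The hard part is exactly this last finite-rank reduction; everything else is soft.
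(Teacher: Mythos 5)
The paper does not prove Theorem \ref{Getalthm}: it is quoted from Grinshpan et al.\ \cite{Getal} purely for context, so there is no internal argument to compare yours against. Judged on its own terms, your proposal is essentially circular at its core: the ``strict bounded real lemma for the Schur--Agler class'' that you invoke in Step 2 \emph{is} the theorem being proven (the existence of a finite-dimensional contractive Givone--Roesser-type realization under $\|S\|_{\mcA_d}<1$ and analyticity near $\ctri[d]$ is exactly the content of \cite{Getal}), so attributing it to the reference does not constitute a proof. Moreover, as phrased, Step 2 overstates what such a lemma could give: feasibility of a block-diagonal KYP-type LMI for a \emph{fixed, arbitrary} finite realization produced in Step 1 is not guaranteed by $\|S\|_{\mcA_d}<1$; the known multidimensional results construct a (generally different, possibly larger) contractive realization directly from a finite-rank Agler decomposition rather than rescaling a pre-given non-contractive one.

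The genuine gap is in your sketch of the ``hard part,'' the finite-rank reduction. Your dilation step is fine: Cauchy estimates on a polydisk of radius $r>1$ give $\sum_\al \|s_\al\|\rho^{|\al|}<\infty$ for $1<\rho<r$, so $\|S_\rho\|_{\mcA_d}\le\|S\|_{\mcA_d}+\sum_\al\|s_\al\|(\rho^{|\al|}-1)<1$ for $\rho$ close to $1$, and pulling back an operator Agler decomposition of $S_\rho$ does yield terms analytic on a neighborhood of $\ctri[d]$. But the proposed finite-rank step --- multiply by $\overline{p(w)}p(z)$ and ``match bihomogeneous components in the spirit of Theorem \ref{polydecomps}'' --- does not work here. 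The mechanism of Theorem \ref{polydecomps} depends essentially on $S$ being iso-inner: the identity $S^*S=I_N$ on $\T^d$ is what makes $\overline{p(w)}p(z)I-Q(w)^*Q(z)$ divisible by $(1-\bar\eta\zeta)$ on diagonal slices $z=\zeta\tau$, $w=\eta\tau$, and that divisibility plus degree counting is what kills the high homogeneous components of $H$. For a strictly contractive $S$ there is no such divisibility, there is an extra kernel $G(w)^*G(z)$ in the decomposition, and rationality of $S$ alone does not force the operator-valued $F_j$ to have finite rank or rational entries with denominator $p$; indeed, producing a finite-rank (matrix polynomial over $p$) decomposition under the hypotheses $\|S\|_{\mcA_d}<1$ and analyticity near $\ctri[d]$ is precisely the Hermitian-Positivstellensatz content of \cite{Getal}, proved there by a separation/compactness argument in finite-dimensional cones of Hermitian polynomial kernels, not by the degree-matching argument you gesture at. So the central step of your proof is missing, and the surrounding scaffolding either presupposes it or cites the theorem itself.
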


The following question asks about what is 
still left open.  

\begin{question}
For $d>2$,
if $S:\D^d \to \C^{M\times N}$  is rational, $\|S\|_{\mcA_d} = 1$, 
and is neither iso-inner nor coiso-inner, then does $S$ 
have a finite-dimensional contractive TFR?
\end{question}

We also do not know how essential analyticity on $\overline{\D}^d$ is for Theorem \ref{Getalthm}.
Note $d=1,2$ follows from Theorem \ref{finitetfr}.

\section{Kummert's minimality theorem}

In this section we discuss minimality of size breakdowns
for finite TFRs, namely Theorem \ref{minthm}.  
Minimality in one variable follows directly from 
Theorem \ref{equivalences}.

\begin{prop}
Let $S:\D\to \C^{M \times N}$ be rational and iso-inner.
Then, the minimal size of an isometric TFR for $S$ is the rank of the 
positive semi-definite kernel
\[
(w,z) \mapsto \frac{I - S(w)^* S(z)}{1-\bar{w}z}.
\]
\end{prop}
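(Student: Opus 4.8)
The plan is to establish the two inequalities that pin down the minimal size of an isometric TFR for $S$ as exactly the rank of the kernel $K_S(w,z) = \dfrac{I - S(w)^*S(z)}{1-\bar w z}$. By Lemma \ref{Skernel-lemma} this kernel is positive semi-definite, so its rank $r$ is well-defined (Definition \ref{rank}); write $r$ for this rank throughout.

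First I would show that $S$ admits an isometric TFR of size $r$. Factor the kernel as $K_S(w,z) = F(w)^*F(z)$ where $F(z)$ takes values in $\C^r$ (a minimal factorization, so that the values $F(z)$ for $z \in \D$ span $\C^r$); such a factorization exists with target dimension exactly the rank by the standard Kolmogorov/Moore construction. Multiplying through by $1-\bar w z$ gives
\[
I - S(w)^*S(z) = (1-\bar w z) F(w)^* F(z),
\]
which is precisely condition (3) of Theorem \ref{equivalences} (with $G = 0$ and $d=1$). By bonus (B3) of that theorem, the lurking-isometry argument produces an isometric matrix $T$ realizing $S$, and the size breakdown equals the number of rows of $F$, namely $r$. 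So the minimal size is at most $r$.

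Conversely, suppose $S$ has some isometric TFR of size $s$, say via isometric $T = \begin{pmatrix} A & B \\ C & D\end{pmatrix}$ with $\Delta(z) = zI_s$. By the $(1)\implies(2)\implies(3)$ direction of Theorem \ref{equivalences} (again using (B3)), setting $F(z) = (I - zD)^{-1}C$ we get $I - S(w)^*S(z) = (1-\bar w z) F(w)^*F(z)$, hence $K_S(w,z) = F(w)^*F(z)$ with $F$ taking values in $\C^s$. Since $K_S$ has a factorization through $\C^s$, its rank is at most $s$; thus $r \le s$. Combining the two bounds, the minimal size equals $r$.

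The one genuine point requiring care — the main (minor) obstacle — is matching up the \emph{size} of the TFR with the \emph{number of rows} of $F$ and with the \emph{rank} of $K_S$: one must check that a minimal factorization of a rank-$r$ positive semi-definite kernel really does have target dimension $r$ (not merely $\le$ something), and that compressions/extensions in the lurking-isometry step do not inflate the $\Delta$-block beyond $r$. Both are handled by the bookkeeping already set up around Theorem \ref{equivalences} (the remark that $r_j$ equals the number of rows of $F_j$) together with the definition of kernel rank in the Appendix, so no new ideas are needed. Everything else is a direct invocation of Theorem \ref{equivalences} and Lemma \ref{Skernel-lemma}.
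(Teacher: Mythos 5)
Your argument is correct and is exactly the route the paper intends: the paper disposes of this proposition with the remark that it ``follows directly from Theorem \ref{equivalences},'' and your two inequalities are precisely that elaboration --- the upper bound via a rank-$r$ factorization $K_S=F^*F$ fed into $(3)\Rightarrow(1)$ with bonus (B3) (matching the construction in Theorem \ref{onevarthmdetailed} and Remark \ref{TFRformula1}), and the lower bound via $(1)\Rightarrow(3)$ with $F(z)=(I-zD)^{-1}C$, using that the number of rows of $F$ equals the size of the $\Delta$-block. No gaps; the bookkeeping point you flag is indeed covered by the paper's observation that $r_j$ equals the number of rows of $F_j$ together with Definition \ref{rank}.
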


The definition of the rank of a positive semi-definite kernel is 
given in Definition \ref{rank} in the Appendix.
In two variables, we will frequently refer to the dominant $z_1$-term $G^*G$
and sub-dominant $z_2$-term $H^*H$ 
associated to $S$ which were constructed in the proof
of Theorem \ref{mainthm}; see the end of Section \ref{sec:twovar}.
Note that
the number of rows of $G$ matches the
generic rank of the matrix $T(z_2)$ 
as in equation \eqref{paraSC}.
This cannot be reduced
because this is the generic or maximal rank of 
the positive semi-definite kernels
\[
(w_1,z_1) \mapsto \frac{I-S(w_1,z_2)^*S(z_1,z_2)}{1-\bar{w}_1z_1} \text{ as } z_2 \text{ varies over } \T.
\]
  Note division of \eqref{paraSC} by $\overline{p(w_1,z_2)} p(z_1,z_2)$
will not change the rank of the positive semi-definite kernel
and does not introduce any poles in $\D$ since $p(\cdot,z_2)$ has
no zeros in $\D$ by Lemma \ref{Hurwitzlemma}.

We claim that in the inner case 
the rank of $H^*H$ is also as small as 
possible.  We suspect this happens in the iso-inner
case but cannot prove it.

\begin{question}
If $S:\D^2\to \C^{M\times N}$ is iso-inner 
(and not inner), does the construction in Section \ref{sec:twovar}
produce a size breakdown $(r_1,r_2)$
with $r_1$ equal to the generic size  
of a TFR for $S(\cdot,z_2)$ (for $z_2\in \T$)
and $r_2$ equal to the generic size of a TFR
for $S(z_1,\cdot)$ (for $z_1 \in \T$)?
\end{question}

This question is subtle because every iso-inner
function $S$ is a submatrix of an inner function $\Phi$
with the same size breakdown.  
We have built a size breakdown with 
$r_1$ minimal so $r_1$ must also be minimal for $\Phi$.
We could then build a TFR with size breakdown $(r_1,r_2^*)$
where $r_2^*$ is minimal for $\Phi$.  Is it minimal for the
restriction to $S$?

The next result characterizes $G^*G$ and $H^*H$.

\begin{prop} \label{domination}
Assume $S:\D^2\to \C^{M \times N}$ is rational and iso-inner.
Write $S=Q/p$ in lowest terms.
Suppose we had a formula
\[
\overline{p(w)}p(z)I - Q(w)^*Q(z) = (1-\bar{w}_1 z_1) \Gamma_1(w)^*\Gamma_1(z) + 
(1-\overline{w}_2 z_2) \Gamma_2(w)^* \Gamma_2(z)
\]
where $\Gamma_1, \Gamma_2$ are matrix polynomials.
Then, 
\begin{equation} \label{Gdominant}
(w,z) \mapsto \frac{G(w)^*G(z) - \Gamma_1(w)^* \Gamma_1(z)}{1-\bar{w}_2 z_2}  =
\frac{\Gamma_2(w)^* \Gamma_2(z) - H(w)^* H(z)}{1-\bar{w}_1 z_1}
\end{equation}
is a positive semi-definite polynomial kernel.
Here again $G^*G$ is the dominant $z_1$-term and
$H^*H$ is the sub-dominant $z_2$-term.
\end{prop}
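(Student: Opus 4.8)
The plan is to exploit the two-variable structure that was already built in the proof of Theorem \ref{mainthm}, together with the one-variable Fej\'er--Riesz factorization underlying Step 2 of that proof. Recall that $G(w)^*G(z) = \Lambda(w_1)^* A(w_2)^* A(z_2) \Lambda(z_1)$, so that by construction
\[
\overline{p(w)}p(z) I - Q(w)^* Q(z) = (1-\bar w_1 z_1) G(w)^* G(z)
\]
holds whenever $w_2, z_2 \in \T$ (this is exactly the content of the displayed identity right after the definition of $\Lambda$ in Step 2). The key point is that $G^*G$ is precisely the \emph{maximal} sum-of-squares term that can be extracted after dividing by $(1-\bar w_1 z_1)$ with $z_2$ on the torus: its number of rows matches the generic rank of $T(z_2)$, which by Theorem \ref{genFR} is the true rank of $T(z_2)$ at all but finitely many points of $\T$. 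So the first step is to substitute $w_2 = z_2 = \tau \in \T$ into the hypothesized formula to get
\[
\overline{p(w_1,\tau)}p(z_1,\tau) I - Q(w_1,\tau)^* Q(z_1,\tau) = (1-\bar w_1 z_1)\, \Gamma_1(w_1,\tau)^* \Gamma_1(z_1,\tau),
\]
and compare with the analogous identity for $G$. Dividing by $(1-\bar w_1 z_1)$ yields $G(w_1,\tau)^*G(z_1,\tau) = \Gamma_1(w_1,\tau)^*\Gamma_1(z_1,\tau)$ as positive semidefinite kernels in $(w_1,z_1)$; hence $G(\cdot,\tau)^*G(\cdot,\tau) \geq \Gamma_1(\cdot,\tau)^*\Gamma_1(\cdot,\tau)$ trivially (they are equal), so the difference is a positive semidefinite kernel for each fixed $\tau \in \T$, and being a polynomial in $\bar w_1, z_1, \bar\tau, \tau$, it is positive semidefinite for $(w_1,z_1)\in\D$ too.

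Next I would upgrade this ``on the torus in $z_2$'' positivity to the claimed positivity of the divided quotient. Subtracting the hypothesized decomposition from the $G^*G$ identity gives, for all $w,z$ (both identities being polynomial),
\[
(1-\bar w_1 z_1)\bigl(G(w)^*G(z) - \Gamma_1(w)^*\Gamma_1(z)\bigr) = (1-\bar w_2 z_2)\,\Gamma_2(w)^*\Gamma_2(z) - (1-\bar w_1 z_1) H(w)^* H(z)
\]
once we also use that $\overline{p(w)}p(z)I - Q(w)^*Q(z) = (1-\bar w_1 z_1)H(w)^*H(z) + (1-\bar w_2 z_2)(\dots)$ coming from the full two-variable Agler decomposition built in Step 4 (with $H$ the sub-dominant term and the $z_2$-coefficient being a square). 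Comparing, one finds that $G(w)^*G(z) - \Gamma_1(w)^*\Gamma_1(z)$ is divisible by $(1-\bar w_2 z_2)$ and $\Gamma_2(w)^*\Gamma_2(z) - H(w)^*H(z)$ is divisible by $(1-\bar w_1 z_1)$, with the two quotients equal; call the common quotient $R(w,z)$. This gives the equality of the two expressions in \eqref{Gdominant} and shows $R$ is a polynomial kernel.

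Finally I would prove $R \geq 0$. The cleanest route: fix $z_2 = w_2 = \tau \in \T$ in the first expression $R(w,z) = \frac{G(w)^*G(z) - \Gamma_1(w)^*\Gamma_1(z)}{1-\bar w_2 z_2}$ — but this is the indeterminate $0/0$, so instead fix $z_1 = w_1 = \sigma \in \T$ in the second expression $R = \frac{\Gamma_2(w)^*\Gamma_2(z) - H(w)^*H(z)}{1-\bar w_1 z_1}$, which again is $0/0$. So direct substitution fails and one must argue one variable at a time. The approach I expect to work: set $w = \eta\tau$, $z = \zeta\tau$ for $\tau\in\T^2$ and $\eta,\zeta\in\D$, turning the whole identity into a one-variable statement in $\zeta, \bar\eta$; then use a one-variable Fej\'er--Riesz / lurking-isometry argument exactly as in the proof of Theorem \ref{polydecomps} to conclude that the ``diagonal slices'' of $R$ are positive, and then invoke that $R$ is a polynomial kernel to pass from positivity on the family of slices $\{\eta\tau : \tau\in\T^2\}$ to positivity everywhere; alternatively, argue that $G^*G$ is the \emph{maximal} such term so any competing decomposition's $z_1$-term $\Gamma_1^*\Gamma_1$ is dominated by $G^*G$ with the defect itself admitting a $(1-\bar w_2 z_2)$-Agler-type structure, making $R$ a positive combination. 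The main obstacle is precisely this last positivity step: the two natural substitutions both degenerate to $0/0$, so one cannot simply restrict to the torus, and one must instead carry out the Fej\'er--Riesz slice argument carefully and then argue that positivity of a polynomial kernel on the radial slices through all torus points forces global positivity. Everything else is bookkeeping with the identities already established in Sections \ref{sec:twovar} and \ref{moretfr}.
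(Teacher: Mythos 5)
Your setup is on the right track --- restricting to $w_2=z_2=\tau\in\T$ to get $\Gamma_1(\cdot,\tau)^*\Gamma_1(\cdot,\tau)=G(\cdot,\tau)^*G(\cdot,\tau)$, and subtracting the two decompositions of $\overline{p(w)}p(z)I-Q(w)^*Q(z)$ to identify the two quotients in \eqref{Gdominant} --- but the heart of the proposition, the positive semi-definiteness of that quotient, is exactly the step you leave unresolved, and neither of your suggested routes closes it. The ``maximality of $G^*G$'' cannot be invoked: that maximality is precisely what the proposition establishes (see the remark following its statement), so using it is circular. And positivity of a kernel on the diagonal slices $w=\eta\tau$, $z=\zeta\tau$ ($\tau\in\T^2$) does not imply positive semi-definiteness of a two-variable kernel, since PSD-ness must be tested on arbitrary finite tuples of points in $\D^2$, not just tuples lying on a common slice; knowing the kernel is polynomial does not rescue this. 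There is also a bookkeeping slip: $H^*H$ is the sub-dominant $z_2$-term, so in the subtracted identity it should appear with the factor $(1-\bar w_2 z_2)$, giving $(1-\bar w_1 z_1)\bigl(G(w)^*G(z)-\Gamma_1(w)^*\Gamma_1(z)\bigr)=(1-\bar w_2 z_2)\bigl(\Gamma_2(w)^*\Gamma_2(z)-H(w)^*H(z)\bigr)$, not the formula you wrote (your divisibility conclusion matches the corrected identity, so this is repairable, but it signals the variables got crossed).

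The missing idea is the outer-type structure of $G$ coming from the matrix Fej\'er--Riesz theorem. After bounding the $z_1$-degree of $\Gamma_1$ by $n_1-1$ (top-coefficient argument on $\T$), one factors $\Gamma_1(z)=C(z_2)\Lambda(z_1)$ alongside $G(z)=A(z_2)\Lambda(z_1)$ and extracts coefficients to get the one-variable identity $A(z_2)^*A(z_2)=C(z_2)^*C(z_2)$ on $\T$. The dominance of $G^*G$ is then \emph{not} formal: it comes from the fact that $A$ is the Fej\'er--Riesz factor with a rational right inverse analytic in $\D$ (Theorem \ref{genFR}), so the uniqueness result (Theorem \ref{genFR-uniqueness}) gives a one-variable rational iso-inner $\Phi$ with $C=\Phi A$. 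Then
\[
\frac{A(w_2)^*A(z_2)-C(w_2)^*C(z_2)}{1-\bar w_2 z_2}
= A(w_2)^*\left(\frac{I-\Phi(w_2)^*\Phi(z_2)}{1-\bar w_2 z_2}\right)A(z_2)
\]
is positive semi-definite by the one-variable Lemma \ref{Skernel-lemma}, and conjugating by $\Lambda(w_1)^*,\Lambda(z_1)$ yields the positivity (and polynomiality, since $A^*A=C^*C$ on $\T$) of \eqref{Gdominant}. Without passing through this outer factorization and the uniqueness theorem, the positivity off the torus in $z_2$ does not follow from the on-torus equality alone.
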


This result characterizes $G^*G$ as maximal 
and $H^*H$ as minimal in the above sense.  
Indeed, if some other kernel $L^*L$ satisfied 
the same property as $G^*G$ then 
both
\[
\frac{G^*G-L^*L}{1-\bar{w}_2 z_2} \text{ and } \frac{L^*L- G^*G}{1-\bar{w}_2z_2}
\]
would be positive semi-definite forcing $G^*G=L^*L$.

\begin{proof}[Proof of Proposition \ref{domination}]
If we set $z_2=w_2\in \T$ we get
\[
\frac{\overline{p(w)} p(z) I - Q(w)^* Q(z)}{1-\bar{w}_1 z_1} = \Gamma_1(w)^*\Gamma_1(z) = G(w)^*G(z).
\]
The left side has degree at most $n_1-1$ in $z_1$.
We claim $\Gamma_1(z)$ has
degree at most $n_1-1$ in $z_1$.
Consider $\Gamma_1$'s top degree term $\gamma(z_2) z_1^k$
where $\gamma(z_2)$ is a matrix polynomial.
Then, the term $\bar{w}_1^k z_1^k$ appears on the right hand side
with coefficient $\gamma(z_2)^*\gamma(z_2)$ for $z_2 \in \T$.  
If $k>n_1-1$ then $\gamma(z_2)^*\gamma(z_2) \equiv 0$ on $\T$ implying
$\gamma(z_2) \equiv 0$ on $\T$ and also on $\C$ by analyticity.
Thus, $\Gamma_1$ has degree at most $n_1-1$ in $z_1$.

Just as we have factored $G(z) = A(z_2)\Lambda(z_1)$ 
we can also factor $\Gamma_1(z) = C(z_2) \Lambda(z_1)$. 
Recall $\Lambda(z_1) = (I, z_1 I, \cdots, z_1^{n_1-1} I)^t$.
Upon extracting coefficients of $\bar{w}_1^jz_1^k$ we see that
\[
A(z_2)^* A(z_2) = C(z_2)^*C(z_2) 
\]
for $z_2 \in \T$. 
This is related to characterizing uniqueness
in the matrix Fej\'er-Riesz theorem.  
We address this in the appendix in Theorem \ref{genFR-uniqueness}.
By Theorem \ref{genFR-uniqueness}, since $A$ has a left inverse,
there exists a one variable iso-inner function $\Phi$
 such that $C = \Phi A$.  

So,
\[
\frac{A(w_2)^*A(z_2) - C(w_2)^* C(z_2)}{1-\bar{w}_2 z_2}
=A(w_2)^* 
\left(\frac{I-\Phi(w_2)^*\Phi(z_2)}{1-\bar{w}_2 z_2}\right)
A(z_2)
 \]
which is positive semi-definite.  Applying $\Lambda(w_1)^*$ on the left and $\Lambda(z_1)$
on the right we get 
\[
\frac{G(w)^* G(z) - \Gamma_1(w)^* \Gamma_1(z)}{1-\bar{w}_2 z_2} = \frac{\Gamma_2(w)^*\Gamma_2(z) - H(w)^*H(z)}{1-\bar{w}_1 z_1}
\]
is positive semi-definite.  
It is a polynomial kernel because
$A^*A=C^*C$ on $\T$.
\end{proof}

We now switch to the square/inner case and
show that the Kummert construction
gives the best possible size breakdown $r=(r_1,r_2)$.
We need to show $H(w)^*H(z)$ has the minimal rank possible
in the sense that it matches the generic size of a 
TFR for $S(z_1,\cdot)$ for $z_1\in \T$.
To do this, we show that we can ``reflect'' an Agler decomposition
of $S$ to get an Agler decomposition for $\breve{S}$
and this reflection reverses the dominant and sub-dominant 
properties of $G^*G$ and $H^*H$.
This is not the original approach of Kummert;
instead it more closely resembles the Hilbert space 
approach in \cite{BickelKnese}.  Recall $\breve{S}(z) = S(\bar{z})^*$.

\begin{prop} \label{reverseprop}
Suppose $S:\D^2\to \C^{N\times N}$ is rational and inner.
Write $S=Q/p$ in lowest terms.
Suppose we had a formula
\begin{equation} \label{ourformula}
\overline{p(w)}p(z)I_N - Q(w)^*Q(z) = (1-\bar{w}_1 z_1) \Gamma_1(w)^*\Gamma_1(z) + 
(1-\overline{w}_2 z_2) \Gamma_2(w)^* \Gamma_2(z)
\end{equation}
where $\Gamma_1, \Gamma_2$ are matrix polynomials.
Then,
\begin{equation} \label{reflectops}
\tilde{\Gamma}_1(z) := \frac{1}{z_1p(1/z)} \Gamma_1(1/z)\breve{S}(z)
\text{ and }
\tilde{\Gamma}_2(z) := \frac{1}{z_2 p(1/z)} \Gamma_2(1/z) \breve{S}(z)
\end{equation}
are matrix polynomials and
\begin{equation} \label{desiredformula}
\overline{\breve{p}(w)} \breve{p}(z)I-\breve{Q}(w)^* \breve{Q}(z) 
= (1-\bar{w}_1 z_1) \tilde{\Gamma}_1(w)^*\tilde{\Gamma}_1(z) + 
(1-\bar{w}_2 z_2) \tilde{\Gamma}_2(w)^*\tilde{\Gamma}_2(z).
\end{equation}
The sub-dominant $z_2$-term of $S$ reflects to the
dominant $z_2$-term of $\breve{S}$.
\end{prop}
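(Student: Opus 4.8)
The plan is to verify \eqref{desiredformula} by a direct substitution argument, using the fact that $S$ inner means $\breve S(z) S(\bar w)^* \cdots$ relations hold, but more efficiently by exploiting the ``reflection'' symmetry already present in the equivalences machinery. First I would recall the basic reflection identities. Writing $n=(n_1,n_2)$ for the bidegree bounding $p$ and $Q$, set $\breve p(z) = \overline{p(1/\bar z)} = z^n p(1/z)$-type reflected polynomials (with the understanding that since $S=Q/p$ is inner, $\breve S = \breve Q/\breve p$ in lowest terms, and $|p|^2 I_N = Q^*Q$ on $\T^2$ forces the reflected polynomial identity $\breve p(z)\overline{\breve p(w)} I_N - \breve Q(w)^*\breve Q(z)$ to be governed by the same structure). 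The key algebraic fact is that on $\T^2$ one has $\overline{p(w)}p(z) I_N - Q(w)^*Q(z) = \overline{p(w)}p(z)\big(I_N - S(w)^*S(z)\big)$, and using $S(w)^* = S(\bar w)^{-1}$-style inner relations together with $\breve S(z) = S(\bar z)^*$, one can transport an Agler decomposition of $S$ to one of $\breve S$ by the substitution $z\mapsto 1/z$, $w\mapsto 1/w$ and multiplying on appropriate sides by $\breve S$.

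Concretely, the main computation is: take \eqref{ourformula}, replace $(z,w)$ by $(1/z, 1/w)$, and multiply on the left by $\breve S(w)^*$ and on the right by $\breve S(z)$. Since $S$ is inner, $S(1/w)^* S(1/z)$ relates to $\breve S(w)^* \breve S(z)$ after clearing the scalar factors $\overline{p(1/w)} p(1/z)$, and the factors $(1 - \bar w_j^{-1} z_j^{-1}) = -\bar w_j^{-1} z_j^{-1}(1-\bar w_j z_j)$ produce exactly the monomial prefactors $\bar w_j z_j$ absorbed in the definitions \eqref{reflectops}. After collecting terms this should yield precisely \eqref{desiredformula}, modulo checking that the resulting right-hand side pieces are the claimed $\tilde\Gamma_j(w)^*\tilde\Gamma_j(z)$. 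I would then separately argue polynomiality of $\tilde\Gamma_1,\tilde\Gamma_2$: a priori each is a rational matrix function, but the identity \eqref{desiredformula} shows $\tilde\Gamma_j(w)^*\tilde\Gamma_j(z)$ is a polynomial kernel (the left side is a polynomial and $(1-\bar w_1 z_1), (1-\bar w_2 z_2)$ are coprime, so by the same divisibility/degree bookkeeping used in the proof of Theorem \ref{polydecomps} each summand is a polynomial), and then the homogeneous-expansion vanishing trick from that proof (restricting to $z=\zeta\tau$, $w=\eta\tau$ for $\tau\in\T^2$ and reading off that high-degree homogeneous parts $P_j$ satisfy $P_j(\tau)^*P_j(\tau)\equiv 0$, hence $P_j\equiv 0$) upgrades this to polynomiality of $\tilde\Gamma_j$ themselves. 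Alternatively, and more cheaply, one checks directly from \eqref{reflectops} that the apparent denominators $z_1 p(1/z)$ and $z_2 p(1/z)$ divide the numerators using that $\Gamma_j(z)$ has $z_j$-degree at most $n_j-1$ (proven in Proposition \ref{domination}) together with $\breve S = \breve Q/\breve p$; I expect the degree-at-most-$n_1-1$ fact to be exactly what makes $z_1 p(1/z)$ cancel cleanly.

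For the final sentence — that the sub-dominant $z_2$-term of $S$ reflects to the dominant $z_2$-term of $\breve S$ — I would argue as follows. By definition, $H^*H$ is the sub-dominant $z_2$-term of $S$, meaning (via Proposition \ref{domination}) it is the \emph{minimal} kernel $\Gamma_2^*\Gamma_2$ appearing among all decompositions \eqref{ourformula}; dually, the dominant $z_1$-term $G^*G$ is maximal. The reflection map $\Gamma_j \mapsto \tilde\Gamma_j$ given by \eqref{reflectops} is an involution (applying it twice returns to $S$, since $\breve{\breve S} = S$) that sends decompositions of $S$ bijectively to decompositions of $\breve S$, and it is ``order-reversing'' on the $z_2$-components in the sense that \eqref{Gdominant}'s positive-semidefinite domination is preserved: if $\Gamma_2^*\Gamma_2 - H^*H$ is divisible by $(1-\bar w_1 z_1)$ with positive-semidefinite quotient, then after reflection $\widetilde{\Gamma}_2{}^*\widetilde\Gamma_2$ dominates (rather than is dominated by) the corresponding term for $\breve S$ — because reflection swaps the roles of the two coprime factors in the way the prefactors $\bar w_2 z_2$ enter. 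Hence the minimal $z_2$-term of $S$ maps to the maximal $z_2$-term of $\breve S$, i.e. the dominant $z_2$-term of $\breve S$.

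\textbf{Main obstacle.} I expect the genuinely delicate step to be the bookkeeping that the monomial prefactors and the scalar denominators conspire correctly — specifically, confirming that multiplying \eqref{ourformula} (after $z\mapsto 1/z$) by the inner function $\breve S$ on both sides and clearing $\overline{\breve p(w)}\breve p(z)$ produces \emph{exactly} $\overline{\breve p(w)}\breve p(z) I_N - \breve Q(w)^*\breve Q(z)$ on the left with no stray factors, and that the sign $-\bar w_j^{-1} z_j^{-1}$ from inverting $(1-\bar w_j z_j)$ is absorbed without error into $\tilde\Gamma_j(w)^*\tilde\Gamma_j(z)$ (the cross-conjugate structure means the two $\bar w_j^{-1}$ and $z_j^{-1}$ halves must split evenly, which is why the definitions in \eqref{reflectops} put a single power of $z_1$ or $z_2$ in the denominator). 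Getting the inner relation $S(1/w)^* S(1/z) = \breve S(w)^* \breve S(z)$ in the precise normalized form needed (including the scalar $|p|$-factors) is where I would be most careful; everything after that is the polynomiality cleanup and the abstract min/max duality, both of which are by now routine given Propositions \ref{domination} and \ref{breve} and the argument template of Theorem \ref{polydecomps}.
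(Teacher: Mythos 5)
Your proposal is correct and is essentially the paper's own argument: the paper likewise obtains \eqref{desiredformula} by substituting $(1/z,1/w)$ into \eqref{ourformula} and conjugating by $\breve{Q}$ (equivalently by $\breve{S}$ after clearing the scalar factor $\overline{p(1/w)}p(1/z)$, using the inner relation $Q(1/z)\breve{Q}(z)=p(1/z)\breve{p}(z)I$, which is where $M=N$ enters), gets polynomiality of the reflected terms from Theorem \ref{polydecomps} applied to $\breve{Q}/\breve{p}$ in lowest terms, and proves the dominance reversal by reflecting the identity \eqref{Gdominant} exactly as you describe. The bookkeeping you flag as the main obstacle is handled there by the same sign manipulation $1-(\bar{w}_jz_j)^{-1}=-(1-\bar{w}_jz_j)/(\bar{w}_jz_j)$ you propose, together with the caveat (covered by bonus (B1) of Theorem \ref{equivalences}) that the reflected identity need only hold on an open set where the substitutions are defined.
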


When we say \emph{reflects} above we mean the operations:
\begin{equation} \label{reflects}
\Gamma_1 \mapsto \tilde{\Gamma}_1 \text{ and } \Gamma_2 \mapsto \tilde{\Gamma}_2
\end{equation}
listed in the proposition statement equation \eqref{reflectops}. 
Notice that reflection of the $\Gamma_1$ term is slightly different
from the reflection of the $\Gamma_2$ term.

\begin{proof}[Proof of Proposition \ref{reverseprop}]
Since $S(z)^*S(z) = I$ on $\T^2$ (where defined) we
have $I=S(1/\bar{z})^*S(z) = S(z) S(1/\bar{z})^*$
 for $z\in \C^2$ where defined.  
(This is where $M=N$ gets used.)
 So, $Q(1/z)\breve{Q}(z) = p(1/z) \breve{p}(z)I$.
Now, take equation \eqref{ourformula}, replace $z,w$ with $1/z,1/w$, 
multiply on the right by $\breve{Q}(z)$ and left by $\breve{Q}(w)^*$,
and finally divide through by $-\overline{p(1/w)}p(1/z)$
to get \eqref{desiredformula}
after applying various simplifications.
Of course, we have the caveat that the formula
only holds where all of the operations are defined.
Fortunately, \eqref{desiredformula} only needs to hold on an open set
for the proof of (3)$\implies$ (1),(2) in Theorem \ref{equivalences}
to go through (bonus (B1) of Theorem \ref{equivalences} addresses this).  
We automatically obtain that $\tilde{\Gamma}_1, \tilde{\Gamma}_2$
are polynomials by Theorem \ref{polydecomps}, since
if $Q/p$ is in lowest terms then $\breve{Q}/\breve{p}$ is too.

If we reflect equation \eqref{Gdominant}
in the sense of replacing $z,w$ with $1/z,1/w$ and
conjugating by $\breve{Q}$ 
we obtain
\[
\bar{w}_1z_1 \frac{\tilde{G}(w)^*\tilde{G}(z) - \tilde{\Gamma}_1(w)^* \tilde{\Gamma}_1(z)}{1-(\bar{w}_2z_2)^{-1}} =
\bar{w}_2z_2 \frac{\tilde{\Gamma}_2(w)^*\tilde{\Gamma}_2(z) - \tilde{H}(w)^* \tilde{H}(z)}{1-(\bar{w}_1 z_1)^{-1}} 
\]
which rearranges into
\[
\frac{\tilde{\Gamma}_1(w)^*\tilde{\Gamma}_1(z) - \tilde{G}(w)^* \tilde{G}(z)}{1-\bar{w}_2z_2} =
\frac{\tilde{H}(w)^*\tilde{H}(z) - \tilde{\Gamma}_2(w)^* \tilde{\Gamma}_2(z)}{1-\bar{w}_1 z_1}. 
\]
This is still a positive semi-definite polynomial kernel.  Thus,
$\tilde{H}^*\tilde{H}$ dominates an arbitrary $z_2$-term
making it the dominant $z_2$-term for $\breve{S}$.
\end{proof}

\begin{proof}[Proof of Theorem \ref{minthm}]
By Proposition \ref{reverseprop} 
the subdominant $z_2$-term $H^*H$ of $S$ reflects to 
the dominant $z_2$-term of $\breve{S}$, $\tilde{H}^*\tilde{H}$.
Note that this reflection does not change the rank of 
a positive semi-definite kernel.
The rank of $\tilde{H}^*\tilde{H}$ is then the generic rank of
\[
(w_2,z_2) \mapsto \frac{I-\breve{S}(z_1,w_2)^* \breve{S}(z_1,z_2)}{1-\bar{w}_2 z_2}
\]
for $z_1\in \T$.  This matches the generic
size of a TFR for $\breve{S}(z_1,\cdot)$ which
matches the generic size of a TFR for $S(z_1,\cdot)$
by the adjunction formula, Proposition \ref{breve}.  
Thus the rank of $H^*H$ matches the generic rank of
\[
(w_2,z_2) \mapsto \frac{I-S(z_1,w_2)^* S(z_1,z_2)}{1-\bar{w}_2 z_2}.
\]
\end{proof}

\section{Application to inner polynomials}
Of special interest in the papers connecting
wavelets to TFRs
is the case of iso-inner 
and inner polynomials \cites{wavelet, CCCP}.  
In one variable, we have the following well-known
result.

\begin{prop}\label{polyprop}
Let $S \in \C^{M\times N}[z]$ be iso-inner.  Then,
every isometric TFR of minimal size for $S$ is built out of
an isometric matrix
$T = \begin{pmatrix} A & B \\ C & D \end{pmatrix}$ 
where $D$ is nilpotent.
\end{prop}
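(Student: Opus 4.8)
The plan is to use minimality to recognize the state space of the realization as a controllability space, and then to use that $S$ is a genuine polynomial to force the main operator to be nilpotent. Fix an arbitrary minimal isometric TFR $S(z) = A + Bz(I-zD)^{-1}C$, so $T = \begin{pmatrix}A & B\\ C& D\end{pmatrix}$ is an isometry on $\C^N\oplus\C^r$ with $r$ minimal; by the one-variable minimality result recalled earlier, $r$ equals the rank of $K_S(w,z) = \frac{I-S(w)^*S(z)}{1-\bar wz}$, and by Theorem \ref{equivalences} (the isometric case, where $G=0$) this kernel factors as $K_S(w,z) = F(w)^*F(z)$ with $F(z) = (I-zD)^{-1}C$.

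First I would extract \emph{controllability}. Since $K_S = F(w)^*F(z)$, the rank of $K_S$ is at most $\dim\operatorname{span}\{F(z)v : z, v\}\le r$; because that rank equals $r$, the span must be all of $\C^r$, and expanding $F(z) = \sum_{k\ge 0}z^k D^kC$ together with a Vandermonde argument turns this into $\sum_{k\ge 0}\operatorname{ran}(D^kC) = \C^r$. Next, writing $n = \deg S$ and comparing Taylor coefficients in $S(z) = A + \sum_{k\ge 0}(BD^kC)z^{k+1}$ gives $BD^kC = 0$ for all $k\ge n$; hence $BD^n$ annihilates every $\operatorname{ran}(D^kC)$, so $BD^n = 0$, and therefore $\mathcal W := \operatorname{ran}(D^n)$ is a $D$-invariant subspace contained in $\ker B$.

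The heart of the argument is to show $\mathcal W = \{0\}$. Here I would use the relations coming from $T^*T = I$, namely $B^*B + D^*D = I$ and $A^*B + C^*D = 0$. From the first, $x\in\ker B$ forces $D^*Dx = x$, so $D$ restricts to a norm-preserving map of the finite-dimensional space $\mathcal W$ into itself, hence a unitary of $\mathcal W$; then for $y\in\mathcal W$, writing $y = Dx$ with $x\in\mathcal W$, we get $D^*y = D^*Dx = x\in\mathcal W$, so $\mathcal W$ is also $D^*$-invariant. From the second relation, $x\in\mathcal W\subseteq\ker B$ gives $C^*Dx = -A^*Bx = 0$, and since $D|_{\mathcal W}$ is onto $\mathcal W$ this yields $\mathcal W\subseteq\ker C^*$. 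Finally, for $x\in\mathcal W$ and every $k$, $D^{*k}x\in\mathcal W\subseteq\ker C^*$, so $x\perp\operatorname{ran}(D^kC)$ for all $k$; hence $x\perp\C^r$ and $x = 0$. Thus $\mathcal W = \operatorname{ran}(D^n) = \{0\}$, i.e.\ $D^n = 0$ and $D$ is nilpotent.

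The step I expect to be the main obstacle is the reducibility of $\mathcal W$: in general a $D$-invariant subspace on which $D$ acts unitarily need not be $D^*$-invariant, and indeed the statement is false for merely contractive realizations. What rescues it is precisely that $\mathcal W\subseteq\ker B$ combined with $B^*B + D^*D = I$ forces $D^*D$ to act as the identity on $\mathcal W$, which is exactly enough to push $D^*$ back into $\mathcal W$. I would also flag a minor technical point: $F(z) = (I-zD)^{-1}C$ is a priori only defined near $0$, but since both $K_S$ and $F(w)^*F(z)$ are analytic, the rank of $K_S$ computed from a neighborhood of $0$ agrees with its global rank, so the controllability step goes through as stated.
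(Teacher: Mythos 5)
Your proof is correct, but it is organized differently from the paper's. The paper first proves Proposition \ref{spanprop} — both the controllability condition $\operatorname{span}\{\operatorname{ran}(D^jC)\}=\operatorname{dom}(D)$ and the observability condition $\bigcap_j\ker(BD^j)=\{0\}$ — by cut-down arguments: if either fails, one compresses $T$ to a strictly smaller isometric TFR, contradicting minimality; nilpotency then follows from $BD^jC=0$ eventually, which gives $BD^n=0$ by controllability and hence $\operatorname{ran}(D^n)\subseteq\bigcap_j\ker(BD^j)=\{0\}$. You instead obtain controllability from the kernel-rank characterization of minimal size (rank of $K_S=F(w)^*F(z)$ equals $r$), and then replace the observability half entirely by a direct computation with $T^*T=I$: the relations $B^*B+D^*D=I$ and $A^*B+C^*D=0$ show that $\mathcal{W}=\operatorname{ran}(D^n)\subseteq\ker B$ is a subspace on which $D$ acts unitarily, which is $D^*$-invariant and contained in $\ker C^*$, hence orthogonal to the controllability space and therefore zero. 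Your reducing-subspace computation is essentially the same isometry mechanism that drives the second half of the paper's proof of Proposition \ref{spanprop} (there, $D|_{\mathcal{L}}$ unitary forces $C_2=D_{21}=0$), but your packaging invokes minimality only once and actually proves the slightly stronger statement that any \emph{controllable} isometric TFR of a polynomial iso-inner function has nilpotent $D$, whereas the paper's route isolates the standard, reusable minimality facts of Proposition \ref{spanprop}. One small remark: your flagged technical point about $F(z)=(I-zD)^{-1}C$ being defined only near $0$ is moot, since $D$ is a block of an isometry and hence a contraction, so $F$ is defined on all of $\D$; with that noted, every step checks out.
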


We prove this using the following also well-known characterization of
minimality.  

\begin{prop}\label{spanprop}
Let $S: \D\to \C^{M\times N}$ be rational and iso-inner
with minimal isometric  TFR built out of the isometric matrix
$T = \begin{pmatrix} A & B \\ C & D \end{pmatrix}$.
Then,
\[
\text{span} \{ \text{range} (D^jC) : j=0,1,\dots\} = \text{domain} (D) \text{ and } \bigcap_{j\geq 0} \text{kernel} (BD^j) = \{0\}
\]
\end{prop}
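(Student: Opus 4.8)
\textbf{Proof proposal for Proposition~\ref{spanprop}.}
The plan is to prove the contrapositive in each case: if one of the two span/intersection conditions fails, then $S$ admits an isometric TFR of strictly smaller size, contradicting minimality. Concretely, let $\mcL$ denote the domain of $D$, let $\mcH_C = \mathrm{span}\{\mathrm{range}(D^jC): j\geq 0\}$ be the $D$-invariant ``reachable'' subspace, and let $\mcH_B = \bigcap_{j\geq 0}\mathrm{kernel}(BD^j)$ be the $D$-invariant ``unobservable'' subspace. The key computation is that the TFR $S(z) = A + B\Delta(z)(I-D\Delta(z))^{-1}C$, when expanded as a power series $S(z) = A + \sum_{k\geq 0} B\Delta(z)(D\Delta(z))^k C$, only ever sees the operators $D^j C$ applied and then hit with $BD^i$; so $S$ is unchanged if we replace $\mcL$ by any $D$-invariant subspace that contains $\mathrm{range}(C)$ and on which $BD^j$ still detects everything. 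First I would record that $\mcH_C$ is $D$-invariant and contains $\mathrm{range}(C)$, while $\mcH_B$ is $D$-invariant and annihilated by every $BD^j$. The subtlety is that to stay inside the Kummert framework the replacement space must still be expressible as an orthogonal direct sum of pieces on which the projections $P_1,P_2$ act, i.e.\ the new $\Delta$ must still be $z_1 P_1' + z_2 P_2'$; since $\mcH_C$ and $\mcH_B^\perp$ are reducing for each $P_j$ (they are defined using only $D = DP_1 + DP_2$ and $C$, $B$), this is automatic.

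Next I would carry out the reduction. Suppose $\mcH_C \subsetneq \mcL$. Write $\mcL = \mcH_C \oplus \mcH_C^\perp$; with respect to this decomposition $\mathrm{range}(C)\subseteq \mcH_C$ so $C = \begin{pmatrix} C' \\ 0\end{pmatrix}$, and $D$-invariance of $\mcH_C$ gives $D = \begin{pmatrix} D' & * \\ 0 & *\end{pmatrix}$; also $B = \begin{pmatrix} B' & *\end{pmatrix}$. Plugging into the power series, every term $B\Delta(D\Delta)^k C$ equals $B'\Delta'(D'\Delta')^k C'$ because $\Delta$ is block-diagonal across $\mcH_C\oplus\mcH_C^\perp$ (as $P_1,P_2$ reduce $\mcH_C$) and $C$ lands in the first block. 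Hence $S(z) = A + B'\Delta'(z)(I-D'\Delta'(z))^{-1}C'$ is a TFR of size $\dim\mcH_C < \dim\mcL$. The block operator $\begin{pmatrix} A & B' \\ C' & D'\end{pmatrix}$ is the compression of the isometry to the smaller space; I would then observe it can be dilated back to an isometry of the same reduced size — or, more simply, invoke the Equivalences Theorem (Theorem~\ref{equivalences}, bonus B3): from the TFR one reads off functions $F_j'$ of the smaller row-sizes satisfying $G=0$ in condition~(3), whence there is an \emph{isometric} $T'$ of that smaller size realizing $S$. This contradicts minimality, so $\mcH_C = \mcL$. The case $\mcH_B \neq \{0\}$ is handled symmetrically: apply the adjunction formula (Proposition~\ref{breve}) — for $\breve S$ the roles of ``reachable'' and ``unobservable'' are swapped ($B^*$ plays the role of $C$), so $\mcH_B$ for $S$ is $\mcH_{C}$ for $\breve S$; minimality of the TFR for $S$ gives minimality for $\breve S$ by Proposition~\ref{breve}, and the already-proved span statement applied to $\breve S$ yields $\mcH_B = \{0\}$.

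The main obstacle I anticipate is the passage from ``compressed block operator of smaller size'' back to an honest \emph{isometric} TFR of that same smaller size within the Kummert $\Delta(z) = z_1P_1+z_2P_2$ format — one must be sure that shrinking $\mcL$ does not force the size breakdown to change in an uncontrolled way or destroy the isometry property. The clean way around this is not to argue about the operator directly but to push everything through condition~(3) of Theorem~\ref{equivalences}: the identity $I - S(w)^*S(z) = \sum_j (1-\bar w_j z_j)F_j(w)^*F_j(z)$ is where ranks/row-sizes live, and the reduction above literally produces new $F_j'$ with fewer rows (obtained by compressing $F_j = (I-D\Delta)^{-1}C$ composed appropriately), so the smaller isometric TFR is handed to us by the lurking-isometry argument with $G=0$. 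Thus the real content is the elementary but slightly fiddly block-matrix bookkeeping showing $S$ is genuinely unchanged; the existence of the smaller isometric realization is then formal.
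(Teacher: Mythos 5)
Your overall strategy --- argue the contrapositive by compressing to the reachable subspace $\mcH_C=\mathrm{span}\{\mathrm{range}(D^jC):j\geq 0\}$ and cutting out the unobservable subspace $\bigcap_j\ker(BD^j)$ --- is the same as the paper's, but at the decisive point of each half, namely producing a strictly smaller \emph{isometric} realization, your argument has gaps. In the first half the compressed operator needs neither a dilation nor a lurking isometry: since $\mathrm{range}(C)\subseteq\mcH_C$ and $D\mcH_C\subseteq\mcH_C$, the columns of $T$ indexed by $\C^N\oplus\mcH_C$ have zero component in $\mcH_C^{\perp}$, so $\begin{pmatrix} A & B'\\ C' & D'\end{pmatrix}$ is literally the restriction of the isometry $T$ to $\C^N\oplus\mcH_C$, with range inside $\C^M\oplus\mcH_C$, and hence is itself isometric; this is the observation you were missing. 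Your first workaround, ``dilate back to an isometry of the same reduced size,'' does not work: an isometric extension of a contraction must add output rows, so it realizes an augmentation $\begin{pmatrix} S\\ \ast\end{pmatrix}$ rather than $S$, and cannot contradict minimality among isometric TFRs of $S$. Your fallback via bonus (B3) of Theorem \ref{equivalences} is also incomplete as stated: from a merely contractive $T'$ the proof of $(2)\implies(3)$ produces $G=\sqrt{I-T'^*T'}\begin{pmatrix} I\\ zF'\end{pmatrix}$, which there is no a priori reason to be $0$; to conclude $G\equiv 0$ you would have to use iso-innerness of $S$ on $\T$ together with rationality (set $z=w\in\T$ and propagate the vanishing), an argument you do not supply and which the zero-block observation renders unnecessary.

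The second half does not follow from the first by adjunction as you claim. Proposition \ref{breve} gives $\breve S$ a \emph{coisometric} TFR via $T^*$, and when $M\neq N$ the function $\breve S$ is coiso-inner but not iso-inner, so the hypotheses of the already-proved half of Proposition \ref{spanprop} are simply not met by $\breve S$; you would need to prove the coisometric analogue separately (a symmetric argument), at which point it is simpler to argue directly as the paper does: with $\mcL=\bigcap_j\ker(BD^j)$ one has $B\mcL=\{0\}$ and $D\mcL\subseteq\mcL$, so the $\mcL$-columns of $T$ are $\begin{pmatrix}0\\0\\ D|_{\mcL}\end{pmatrix}$; isometricity makes $D|_{\mcL}$ an isometry of a finite-dimensional space, hence unitary, and orthogonality of columns then forces the $\mcL$-row blocks of the remaining columns to vanish, yielding a smaller isometric realization of $S$ unless $\mcL=\{0\}$. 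Finally, note that Proposition \ref{spanprop} is a one-variable statement, so $\Delta(z)=zI$ and your discussion of the format $\Delta(z)=z_1P_1+z_2P_2$ is out of place; worse, the claim that the reachable and unobservable subspaces ``automatically'' reduce $P_1,P_2$ because they are built from $D$, $B$, $C$ is false in general in two variables --- that failure is precisely why the two-variable minimality statement (Theorem \ref{minthm}) requires the machinery of the paper rather than a naive state-space reduction.
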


\begin{proof}
First note that if $S$ has a TFR via $T$, meaning 
$S(z) = A+zB(I-zD)^{-1} C$,
 then
it also has a TFR via
\[
\begin{pmatrix} I & 0 \\ 0 & U^*\end{pmatrix} T \begin{pmatrix} I & 0 \\ 0 & U\end{pmatrix}
=
\begin{pmatrix} A & BU \\ 
U^*C & U^*DU \end{pmatrix}
\]
where $U$ is a unitary matrix with the same dimensions as $D$.
This is apparent from the formula 
$A + z BU (I-zU^*DU)^{-1} U^*C = S(z).$
We can apply a unitary change of coordinates and
 break up the domain/codomain of $D$ into
 $\mcH = \text{span} \{ D^jC : j=0,1,\dots\}$
 and its orthogonal complement $\mcH^{\perp}$.
In these new coordinates $T$ takes the form
\[
\begin{matrix} & \begin{matrix} \C^N & \mcH & \mcH^{\perp} \end{matrix} \\
\begin{matrix} \C^M \\ \mcH \\ \mcH^{\perp}\end{matrix} &
\begin{pmatrix} A & B_1 & B_2 \\ C & D|_{\mcH} &  * \\ 0 & 0 & * \end{pmatrix} \end{matrix}.
\]
since $D$ maps $\mcH$ to itself and $\text{range}(C) \subset \mcH$.
Since the formula for $S$ is only determined by $D|_{\mcH}$,
we see that $S$ has an isometric TFR via the matrix
$\begin{pmatrix} A & B_1 \\ C & D|_{\mcH} \end{pmatrix}$
which has a smaller size unless $\mcH^{\perp} = \{0\}$ or rather $\mcH = \text{domain} (D)$.

For the second identity, we break up the domain of $D$ into
$\mathcal{L} = \bigcap_{j\geq 0} \text{kernel}(BD^j)$ 
and its orthogonal complement $\mcL^{\perp}$.
Using this orthogonal decomposition we can write
$T$ in new coordinates as
\[
\begin{matrix} & \begin{matrix} \C^N & \mathcal{L}^{\perp} & \mathcal{L} \end{matrix}\\
\begin{matrix} \C^M \\ \mathcal{L}^{\perp} \\ \mathcal{L} \end{matrix} &
\begin{pmatrix} A & B & 0 \\
C_1 & D_{11} & 0 \\ 
C_2 & D_{21} & D|_{\mathcal{L}} \end{pmatrix} \end{matrix}
\]
since $B$ maps $\mathcal{L}$ to $0$ while 
$D$ maps $\mathcal{L}$ into itself.  But since
this is an isometry we must have $D|_{\mathcal{L}}$ 
a unitary which forces $C_2,D_{21} = 0$.
This means $S$ is given by the TFR with 
isometry
$\begin{pmatrix} A & B \\ C_1 & D_{11} \end{pmatrix}.$
This has smaller size unless $\mathcal{L} = \{0\}$.
\end{proof}

\begin{proof}[Proof of Proposition \ref{polyprop}]
If $S(z) = A + zB(I-zD)^{-1}C$ is
a polynomial, then necessarily $BD^jC = 0$ for 
all $j$ large enough.  By Proposition \ref{spanprop},
$BD^n = 0$ for $n$ large enough.  
Then, 
\[
\text{range} (D^n) \subset \bigcap_{j\geq 0} \text{kernel}(BD^j)
\]
implying $\text{range}(D^n) = 0$ or rather $D^n=0$.
\end{proof}

Minimality of TFR representations
in the rational inner case in two variables makes it
possible to prove an analogous result for
inner matrix-valued polynomials in two variables.
Our approach uses determinants to count
the size of minimal TFRs.  The following is a standard result 
in one variable.  We provide a proof in Subsection \ref{PSDsec}.

\begin{prop} \label{degdet}
Let $S: \D \to \C^{N\times N}$ be a rational inner function.  
Then, $\deg \det S$ equals the size of a minimal TFR for $S$.
\end{prop}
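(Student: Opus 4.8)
The plan is to prove the two inequalities $\deg\det S \le m$ and $m \le \deg\det S$ separately, where $m$ denotes the minimal size of a unitary TFR for $S$. For the direction $\deg\det S \le m$: starting from a minimal unitary realization $S(z) = A + zB(I-zD)^{-1}C$ built from a unitary $U = \begin{pmatrix} A & B \\ C & D\end{pmatrix}$ of size $(N+m)\times(N+m)$, one uses the classical Schur-complement determinant identity. Writing $I - zD$ as a block, one has
\[
\det\begin{pmatrix} I_N & -zB \\ zC & I-zD \end{pmatrix}
\]
evaluated two ways: expanding along the first block gives $\det(I - zD)$ times $\det$ of the Schur complement $I_N + zB(I-zD)^{-1}(zC)$, and one checks this Schur complement is exactly (a reflected/rescaled version of) $S(z)$ up to the unitary constant $A$. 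A cleaner route: since $U$ is unitary, $\begin{pmatrix} I_N & 0 \\ 0 & I_m\end{pmatrix} - z\begin{pmatrix} 0 & B \\ 0 & D\end{pmatrix}$ relates to $S$ via the standard formula $\det(I - zD) \cdot \det S(z) = \det(A) \cdot \det(I - z(D - CA^{-1}B))$ when $A$ is invertible, and more robustly one invokes the identity
\[
\det S(z) = \det U \cdot \frac{\det(I - z D^{*})}{\det(I - zD)}
\]
(the standard determinant formula for transfer functions of unitary colligations; here $\det(I-zD^*) = \overline{z^m \det(\text{something})}$ reflects to a polynomial of degree $m$). This shows $\det S$ is a finite Blaschke product of degree at most $m$, hence $\deg\det S \le m$.

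For the reverse inequality $m \le \deg\det S$, I would argue via the Agler/Szeg\H{o} kernel. By Lemma~\ref{Skernel-lemma} and Theorem~\ref{onevarthmdetailed}, the minimal size $m$ of an isometric TFR for $S$ equals the rank of the positive semi-definite kernel $K_S(w,z) = \frac{I - S(w)^*S(z)}{1-\bar w z}$ (this is stated just before the proposition in the text). So it suffices to show this kernel has rank at least $\deg\det S =: n$. Write $S = Q/p$ in lowest terms where $p$ is the (scalar) denominator; since $S$ is inner, $\det S = c\, \tilde p / p$ for a unimodular constant $c$, where $\tilde p(z) = z^{\deg p}\overline{p(1/\bar z)}$, so $\deg p = n$ and $p$ has exactly $n$ zeros (counted with multiplicity) all outside $\overline{\D}$. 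One then shows the $n$ "difference quotient" functions obtained from $K_S$ — equivalently the space $\mathcal{H}(K_S)$, which is the de Branges--Rovnyak space $\mathcal{H}(S)$ — has dimension exactly $n$. The cleanest argument: $\mathcal{H}(S) = H^2(\C^N)\ominus S H^2(\C^N)$, and for a rational inner $S$ with $\det S$ of degree $n$ this model space has dimension $n$; this is the classical fact that $\dim \big(H^2 \ominus \Theta H^2\big) = \deg\det\Theta$ for finite Blaschke--Potapov products $\Theta$, provable by induction on elementary Blaschke factors or by a residue/partial-fraction count. Since $\operatorname{rank} K_S = \dim\mathcal{H}(S) = n$, we get $m = n = \deg\det S$.

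The main obstacle I anticipate is the second inequality: pinning down precisely that $\operatorname{rank} K_S$ equals $\dim(H^2 \ominus S H^2)$ and that this equals $\deg\det S$, without importing heavy de Branges--Rovnyak machinery that the paper has so far avoided. Given the paper's elementary tone, I would instead prefer a direct dimension count: factor $S$ as a finite product of "elementary" one-step inner factors (Blaschke--Potapov factors $B_a(z)\,(I - P) + P$ with $P$ a rank-one projection and $B_a$ a scalar Blaschke factor), each of which contributes exactly $1$ to both $\deg\det S$ and to the minimal TFR size, and verify the size is additive under such products. Additivity of $\deg\det$ is immediate; additivity of minimal TFR size follows by cascading realizations (the product of two TFRs of sizes $n_1, n_2$ has a TFR of size $n_1+n_2$, and minimality is preserved because the kernel $K_{S_1 S_2}$ decomposes accordingly). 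The remaining subtlety is justifying that every rational inner $S$ admits such a factorization with the degree behaving additively — this is the Potapov factorization theorem, which I would either cite or sketch via repeatedly peeling off a rank-one factor at a zero of $\det S$ in $\D$ of the function $S^{-1}$... actually at a zero, so care with whether we factor $S$ or its "reflection" is needed. I would handle this carefully in the write-up, likely citing the one-variable Potapov/Blaschke--Potapov factorization as standard.
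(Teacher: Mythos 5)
Your overall strategy does reach the result, but the decisive step is handled differently from the paper. Both you and the paper reduce the problem to computing the rank of the kernel $(w,z)\mapsto (I-S(w)^*S(z))/(1-\bar{w}z)$, identified (as in the proof of Lemma \ref{Skernel-lemma}) with the dimension of the model space $H_N\ominus SH_N$, $H_N=H^2(\D)\otimes\C^N$. At that point the paper gives a self-contained count: write $S=Q/p$ in lowest terms, take the Smith normal form $Q=T_1DT_2$ (Theorem \ref{SNF}), note $H_N/SH_N\cong H_N/DH_N$, and count zeros in $\D$ of the diagonal entries of $D$, which is exactly $\deg\det S$ since $\det S=\det Q/p^N$. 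You instead delegate this count to classical one-variable theory: either the model-space fact $\dim(H^2\ominus\Theta H^2)=\deg\det\Theta$ for Blaschke--Potapov products, or a Potapov factorization of $S$ into $\deg\det S$ elementary factors combined with cascading of realizations and your degree bound $\deg\det S\le m$; the resulting sandwich ($k\le m\le k$) does close the argument, so your route is legitimate, but it imports Potapov's factorization theorem where the paper only needs the Smith normal form over $\C[z]$, keeping the argument elementary. Two local slips to repair in a write-up, neither load-bearing: the identity $\det S(z)=\det U\cdot\det(I-zD^*)/\det(I-zD)$ is not correct as stated (the numerator should be $\det(D^*-zI)$; or simply use $\det(I-zD)\det S(z)=\det\begin{pmatrix}A&zB\\-C&I-zD\end{pmatrix}$, visibly a polynomial of degree at most $m$, which is the Schur-complement version you sketch); and the claim that $\det S=c\,\tilde p/p$ so that $\deg p=\deg\det S$ is false for matrix-valued inner functions (e.g.\ $S=\frac{z-a}{1-\bar{a}z}I_2$ has $\deg p=1$ but $\deg\det S=2$) --- the correct statement, $\det S=\det Q/p^N$ together with the Smith form, is precisely what the paper's count supplies.
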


Since $S$ is rational inner, $\det S$ is a scalar rational inner function
in one variable which is a finite Blaschke product.  So, 
the $\deg \det S$ refers to the degree of the numerator 
when written in lowest terms.
This immediately yields a method using determinants
to calculate the optimal size breakdown for
rational inner functions in two variables.
(This is another place where it helps to have square matrices.)

\begin{theorem}[Kummert]
If $S: \D^2 \to \C^{N\times N}$ is rational inner, 
then the minimal size breakdown $r=(r_1,r_2)$ of a TFR
for $S$ is
\[
r_j = \deg_j \det S(z_1,z_2) \text{ for } j=1,2.
\]
\end{theorem}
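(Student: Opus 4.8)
The plan is to combine the two one-variable slice facts (Proposition \ref{degdet}) with Kummert's minimality theorem (Theorem \ref{minthm}) and then pin down each $r_j$ by a determinant computation on the slices of $S$. The key point is that $\deg_j \det S(z_1,z_2)$ is constant in the other variable as long as that variable lies in $\T$, so it makes sense to talk about ``the'' degree of the slice.

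\begin{proof}[Proof proposal]
First I would recall from Theorem \ref{minthm} that the minimal size breakdown $(r_1,r_2)$ of a TFR for $S$ has $r_1$ equal to the maximum over $z_2 \in \T$ of the minimal size of a unitary TFR for the one-variable inner function $z_1 \mapsto S(z_1,z_2)$, and symmetrically for $r_2$. By Proposition \ref{degdet}, the minimal size of a unitary TFR for the one-variable rational inner function $S(\cdot,z_2)$ is exactly $\deg \det S(\cdot,z_2) = \deg_{z_1} \det S(z_1,z_2)$ (the degree of the numerator of this finite Blaschke product, written in lowest terms). So $r_1 = \max_{z_2\in\T} \deg_{z_1}\det S(z_1,z_2)$ and $r_2 = \max_{z_1\in\T}\deg_{z_2}\det S(z_1,z_2)$.

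The remaining task is to show these maxima equal $\deg_1 \det S(z_1,z_2)$ and $\deg_2\det S(z_1,z_2)$ respectively, i.e.\ the bidegree of the scalar rational inner function $\det S$ on $\D^2$ (again, numerator in lowest terms). Writing $\det S = q/\pi$ in lowest terms, where $\pi \in \C[z_1,z_2]$ has no zeros in $\D^2$ and $q(z) = z_1^{d_1}z_2^{d_2}\overline{\pi(1/\bar z)}$ up to a unimodular constant (the standard structure of a two-variable rational inner function, with $(d_1,d_2) = (\deg_1\pi,\deg_2\pi) = (\deg_1 q,\deg_2 q)$), one checks that for each fixed $z_2 = \tau \in \T$ the slice $q(\cdot,\tau)/\pi(\cdot,\tau)$ is, after cancelling common factors, a one-variable Blaschke product whose degree is $d_1$ \emph{minus} the number of common zeros of $q(\cdot,\tau)$ and $\pi(\cdot,\tau)$. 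Since $\pi$ has no zeros on $\D^2$ and only finitely many zeros on $\D\times\T$ (as $\pi(\cdot,\tau)\not\equiv 0$ for $\tau\in\T$ by the iso-inner structure, cf.\ Lemma \ref{Hurwitzlemma}), the number of such cancellations is $0$ for all but finitely many $\tau\in\T$, and is always $\geq 0$. Hence $\deg_{z_1}\det S(z_1,\tau) \le d_1$ with equality for generic $\tau$, giving $r_1 = d_1 = \deg_1\det S$, and symmetrically $r_2 = \deg_2\det S$.

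\textbf{Main obstacle.} The step I expect to require the most care is the claim that the generic slice of $\det S$ has no pole/zero cancellation — equivalently that $\pi$ and the ``reflected'' numerator $q$ share no common factor along a generic vertical line $\{z_2=\tau\}$ with $\tau\in\T$. This follows because $q/\pi$ is assumed in lowest terms on $\C[z_1,z_2]$, so $\gcd(q,\pi)=1$ as bivariate polynomials, and the resultant $\mathrm{Res}_{z_1}(q,\pi)$ is a nonzero polynomial in $z_2$; it vanishes only on a finite set, off of which the slices stay coprime. One must also rule out the degenerate possibility that $\deg_{z_1}\pi(\cdot,\tau) < d_1$ (leading-coefficient drop), which again happens only for $\tau$ in the finite zero set of the leading $z_1$-coefficient of $\pi$, hence not generically. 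Putting the ``$\le$'' bound (valid for every slice) together with ``$=$'' for generic slices yields the maximum, and one should remark that the formula $r_j = \deg_j\det S$ is then manifestly well-defined and symmetric in the roles of the two variables. A final small point worth stating: since $S$ is a \emph{polynomial} in applications elsewhere in the paper, $\pi$ is constant and $\det S$ is a monomial-times-unit only when $S$ is trivial, so the theorem genuinely records nontrivial information about the size breakdown.
\end{proof}
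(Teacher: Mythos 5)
Your argument is correct and takes essentially the route the paper intends: there the theorem is presented as an immediate consequence of Theorem \ref{minthm} combined with Proposition \ref{degdet}, with the generic-slice degree count (your resultant and leading-coefficient-drop step, plus the uniform bound $\deg_{z_1}\det S(\cdot,\tau)\le \deg_1 q$) left implicit, and you have simply supplied those details. Two inessential asides are slightly off but do not affect the proof: $\deg_j q=\deg_j\pi$ can fail when $\det S$ carries a monomial factor (only $d_j=\deg_j q$ is actually used), and for a polynomial inner $S$ the determinant is always a unimodular constant times a monomial, not merely in trivial cases.
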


Similarly, for all but finitely many $\zeta \in \T$, the degree of 
\[
z \mapsto \det S(z, \zeta z) 
\]
is $r_1+r_2$.
Therefore, the generic size of a TFR for $z\mapsto S(z,\zeta z)$
is $r_1+r_2$.  This shows that generic restrictions 
to slices
of our two variable minimal TFRs yield
minimal TFRs for restricted functions.

\begin{proof}[Proof of Theorem \ref{thmpolyinner}]
The above argument shows that if a polynomial
inner function $S$ has a minimal 
TFR via the unitary $U=\begin{pmatrix} A & B \\ C & D\end{pmatrix}$
and projections $P_1,P_2$ as in Theorem \ref{mainthm}
then $z \mapsto S(z,\zeta z)$ has
minimal unitary TFR via the unitary
\[
\begin{pmatrix} A & B \\ C & D \end{pmatrix} \begin{pmatrix} I & 0 \\ 0 & P_1+\zeta P_2 \end{pmatrix}.
\]
By Proposition \ref{polyprop}, $D \Delta(1,\zeta)$
is nilpotent for all but finitely many $\zeta \in \T$.
This means $(D\Delta(1,\zeta))^N = 0$
for all but finitely many $\zeta \in \T$.
Since this is a polynomial equation we have 
$(D\Delta(1,\zeta))^N \equiv 0$ and since $D \Delta(z)$
is homogeneous we also have $(D \Delta(z_1,z_2))^N \equiv 0$.
Thus, $D\Delta(z)$ is always nilpotent.
\end{proof}

This leads to the interesting question
of describing contractions $D$ such that $D \Delta(z)$ is nilpotent for all $z$.
An easy way to produce examples would be
to make $D$ strictly upper triangular and 
choose the projections $P_1,P_2$ 
via projections onto the span of subsets
of standard basis vectors.
For such examples, $D\Delta(z)$ is
triangular; however, it is possible to
produce matrices $D_1,D_2$ such
that $z_1 D_1+z_2D_2$ is nilpotent
for all $z$ yet is not triangularizable independent
of $z$; see \cite{nilpotent}.
This could be an interesting source of
examples.

\section{Appendix: auxiliary results}

\subsection{Maximum principle for rational iso-inner functions}

\begin{prop}\label{maxprinciple}
Suppose $S:\D^d \to \C^{M\times N}$ 
is rational, analytic in $\D^d$, and 
$\|S(z)\| \leq 1$ for $z\in \T^d$ where defined.
Then, $\|S(z)\|\leq 1$ for all $z\in \D^d$.
\end{prop}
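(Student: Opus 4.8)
The plan is to reduce the matrix statement to a scalar one and then to induct on the dimension $d$, using only the one-variable maximum modulus principle together with a Hurwitz-type argument to pass between slices.

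\emph{Reduction and a preliminary observation.} Since $\|A\|=\sup\{|u^*Av|: u\in\C^M,\ v\in\C^N,\ |u|=|v|=1\}$ for every matrix $A$, and $z\mapsto u^*S(z)v$ is again rational and analytic in $\D^d$, it suffices to control $|u^*S(z)v|$ for fixed unit vectors $u,v$; I will run the induction on matrix-valued $S$ and scalarize only in the one-variable step. Write $S=Q/p$ with $Q,p$ having no common factor. Because $S$ is analytic in $\D^d$, $p$ has no zeros in $\D^d$. On the cofinite (hence dense) subset $\{p\ne 0\}\cap\T^d$ the hypothesis gives $\|Q(z)\|\le|p(z)|$, hence $Q(z)^*Q(z)\le|p(z)|^2I$, and by continuity this matrix inequality holds on all of $\T^d$; in particular $Q$ vanishes wherever $p$ vanishes on $\T^d$. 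This is the fact to be fed into the Hurwitz step.

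\emph{Base case $d=1$.} Fix unit vectors $u,v$ and write $g=u^*Sv=N/D$ in lowest terms; then $D$ divides $p$, so $D$ is zero-free in $\D$. If $D(\lambda_0)=0$ for some $\lambda_0\in\T$, then $N(\lambda_0)\ne 0$ by coprimality, so $|g(\lambda)|\to\infty$ as $\lambda\to\lambda_0$ along $\T$, contradicting $|g|\le 1$ on the cofinite subset $\{p\ne 0\}\cap\T$. Hence $D$ is zero-free on $\overline{\D}$, $g$ is holomorphic on a neighborhood of $\overline{\D}$, and the one-variable maximum modulus principle gives $|g|\le 1$ on $\overline{\D}$; taking the supremum over $u,v$ yields $\|S(z)\|\le 1$ on $\D$.

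\emph{Inductive step.} Assume the proposition in dimension $d-1$. For $\tau\in\T$ the polynomials $z'\mapsto p(z',t)$, $t\in\D$, are zero-free on $\D^{d-1}$, so by Hurwitz's theorem (exactly as in Lemma~\ref{Hurwitzlemma}) the limit $p(\cdot,\tau)$ is either zero-free on $\D^{d-1}$ or identically zero; the latter would force $Q(\cdot,\tau)\equiv 0$ as well, since $Q(z',\tau)^*Q(z',\tau)\le|p(z',\tau)|^2I=0$ for all $z'\in\T^{d-1}$, contradicting coprimality of $Q$ and $p$. Thus $p(\cdot,\tau)$ is zero-free on $\D^{d-1}$, the slice $S(\cdot,\tau)$ is analytic on $\D^{d-1}$ and bounded by $1$ on $\T^{d-1}$ where defined, and the inductive hypothesis gives $\|S(z',\tau)\|\le 1$ for all $z'\in\D^{d-1}$ and all $\tau\in\T$. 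Now fix $z'\in\D^{d-1}$: since $p$ is zero-free on $\D^d$ and $p(\cdot,\tau)$ is zero-free on $\D^{d-1}$ for every $\tau\in\T$, the one-variable denominator $z_d\mapsto p(z',z_d)$ is zero-free on $\overline{\D}$, so $z_d\mapsto u^*S(z',z_d)v$ is holomorphic on a neighborhood of $\overline{\D}$ with modulus $\le 1$ on $\T$; the maximum modulus principle together with the supremum over unit $u,v$ then gives $\|S(z',z_d)\|\le 1$ for $z_d\in\D$. Letting $z'$ range over $\D^{d-1}$ completes the induction.

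\emph{Main obstacle.} There are no hard estimates; the only delicate point is the behaviour on the distinguished boundary $\T^d$, where $p$ may vanish. The hypothesis is used precisely to rule out poles there in the one-variable situation (a rational function of one variable bounded on a cofinite subset of $\T$ has no pole on $\T$), and the Hurwitz argument is what keeps the $(d-1)$-variable slices honest analytic functions on $\D^{d-1}$, so that the induction is well posed. Minor care is needed to check that ``analytic in $\D^d$'' really forces $p$ to be zero-free in the open polydisk and that $\{p\ne 0\}\cap\T^d$ is dense, but both are routine.
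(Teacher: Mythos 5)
Your proof is correct, but it follows a genuinely different route from the paper. The paper never factors $S$: it scalarizes via $F=w^*Sv$, restricts to the diagonal slices $\zeta\mapsto F(\zeta\omega)$ for $\omega\in\T^d$ (one-variable rational functions bounded on cofinitely much of $\T$, hence with no boundary poles), concludes $|F|\leq 1$ on each torus $r\T^d$, and then fills in all of $\D^d$ with the Poisson integral representation over $r\T^d$. You instead induct on $d$, writing $S=Q/p$ in lowest terms, extending $\|Q\|\leq |p|$ from the dense set $\{p\neq 0\}\cap\T^d$ to all of $\T^d$, and using a Hurwitz argument (exactly the mechanism of Lemma \ref{Hurwitzlemma}, but on $\D^{d-1}$) to show each boundary slice $p(\cdot,\tau)$, $\tau\in\T$, is zero-free on $\D^{d-1}$; this both legitimizes the slices for the inductive hypothesis and makes $z_d\mapsto p(z',z_d)$ zero-free on $\overline{\D}$ for $z'\in\D^{d-1}$, so only the one-variable maximum modulus principle is needed at the end. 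What each approach buys: the paper's argument is shorter, needs no coprimality or factorization at all, but uses the polydisk Poisson kernel; yours avoids Poisson integrals and any multivariable integral representation, and as a by-product establishes the $d$-variable analogue of Lemma \ref{Hurwitzlemma}, at the cost of two standard-but-not-quite-free inputs that you correctly flag: (i) that analyticity of $S$ on $\D^d$ forces the reduced denominator $p$ to be zero-free there (for $d\geq 2$ this needs an argument about irreducible factors of $p$ and their zero varieties — it is the same fact the paper assumes in Lemma \ref{Hurwitzlemma}, so it is within the paper's toolkit, but it is more than the paper's own proof of this proposition uses), and (ii) Hurwitz's theorem in $d-1$ variables rather than one, which is standard (restrict to a suitable complex line) but should be cited as such rather than as Lemma \ref{Hurwitzlemma} verbatim. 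With those two points acknowledged, the induction is sound: the base case, the continuity argument on $\T^{d-1}$ for the slices, and the final one-variable maximum principle step all check out.
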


Rationality is a key assumption since 
$f(z) = \exp\left(\frac{1+z}{1-z}\right)$ 
is unimodular on $\T\setminus \{1\}$
and analytic on $\C \setminus \{1\}$ yet
not bounded by $1$ in $\D$. 

\begin{proof}
We can reduce to the scalar
case by considering arbitrary
unit vectors $v,w$ and the function
$F(z) = w^*S(z)v$.   
Fix $\omega \in \T^d$ and consider the one 
variable rational function $f(\zeta) = F(\zeta \omega).$
This function is bounded by $1$ on $\T$
away from its potential finite number of poles.
But, $f$ must be unbounded near a pole, 
so any singularities on the boundary are
removable.  Hence, $f$ is analytic
on $\overline{\D}$ and bounded by $1$
by the maximum principle.
This implies $F$ is bounded by $1$
at any point of $r\T^d$ for $r<1$.  
Given any $z \in \D^d$, we
can calculate $F(z)$
as a Poisson integral of $F$ on $r \T^d$
for $\|z\|_{\infty} < r < 1$ to
see that $|F(z)| \leq 1$.
\end{proof}

\subsection{Fej\'er-Riesz proofs} \label{Fejerproofs}
A more traditional and well-known version of the
matrix Fej\'er-Riesz theorem is as follows.  See \cite{DGK} for a proof.

\begin{theorem}\label{FR}
Let $T(z) = \sum_{j=-n}^{n} T_j z^j$ be a matrix Laurent polynomial ($T_j \in \C^{N\times N}$)
such that $T(z) \geq 0$ for $z \in \T$ and $\det T(z)$ is not identically zero.  

Then, there exists a matrix polynomial $A \in \C^{N\times N}[z]$ of degree at most $n$ such that
$T = A^* A$ on $\T$
and $\det A(z) \ne 0$ for $z \in \D$.
\end{theorem}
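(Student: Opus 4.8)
The plan is to prove Theorem~\ref{FR} in three stages: the scalar case, the strictly positive-definite matrix case (which I would import from the literature), and a reflection-plus-compactness argument that simultaneously upgrades the factor to a polynomial of degree $\le n$ and forces $\det A$ to be zero-free in $\D$.

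\emph{Scalar case.} For a trigonometric polynomial $t(z)=\sum_{-n}^{n}t_jz^j$ with $t\ge 0$ on $\T$ and $t\not\equiv 0$, I would form $p(z)=z^{n}t(z)$, a polynomial satisfying $p(z)=z^{2n}\overline{p(1/\bar z)}$ by the reality of $t$ on $\T$. Hence the zeros of $p$ off $\T$ occur in pairs $\{a,1/\bar a\}$, while zeros on $\T$ have even multiplicity because $t\ge 0$ there. Collecting into a polynomial $q$ (with multiplicity) the zeros of $p$ outside $\overline{\D}$ together with half of each zero on $\T$, and normalizing so that $|q|^{2}=t$ on $\T$, a count of zeros gives $\deg q\le n$ while $q$ has no zeros in $\D$; thus $A=q$ works.

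\emph{Positive-definite case, and the main obstacle.} Suppose next that $T(z)\ge\delta I$ for all $z\in\T$. Here I would invoke the classical matrix spectral-factorization theorem of Rosenblatt \cite{Rosenblatt} --- equivalently, the one-variable case of Theorem~\ref{dritschel1} together with a squaring step --- to obtain a square matrix function $A$, holomorphic and invertible on $\overline{\D}$, with $A^{*}A=T$ on $\T$; for a trigonometric-polynomial symbol this factor is automatically rational, as is classical. This is the single genuinely non-elementary input, and it is what I expect to be the real obstacle for a self-contained treatment: it rests on Hilbert-space prediction theory (Wiener--Masani, Helson--Lowdenslager) or, equivalently, on an algebraic Riccati equation. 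Everything after this point is elementary bookkeeping.

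\emph{Degree, zero location, and the general case.} Given such a rational $A$, I would introduce the reflection $A_{*}$, the rational matrix function characterized by $A_{*}(z)=A(z)^{*}$ on $\T$; being the reflection of a function holomorphic and invertible on $\overline{\D}$, it is holomorphic and invertible on $\{|z|\ge 1\}\cup\{\infty\}$. From $A^{*}A=T$ on $\T$ we get the identity of rational functions $A_{*}=TA^{-1}$, whose right side is holomorphic on $\overline{\D}$ except for a pole of order $\le n$ at the origin (since $A^{-1}$ is holomorphic on $\overline{\D}$ and $T$ only has a pole of order $\le n$ at $0$); comparing the two descriptions, $A_{*}$ is holomorphic on the whole Riemann sphere except for a pole of order $\le n$ at $0$, so $A_{*}(z)=\sum_{k=0}^{n}C_{k}z^{-k}$ and hence $A(z)=\sum_{k=0}^{n}C_{k}^{*}z^{k}$ is a polynomial of degree $\le n$, automatically with $\det A$ zero-free in $\D$. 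Finally, for the general hypothesis ($\det T\not\equiv 0$ but possibly vanishing at finitely many points of $\T$) I would apply the preceding to $T+\epsilon I\ge\epsilon I$ to produce polynomials $A_{\epsilon}$ of degree $\le n$ with $A_{\epsilon}^{*}A_{\epsilon}=T+\epsilon I$ on $\T$ and $\det A_{\epsilon}$ zero-free in $\D$; these are uniformly bounded since $\sup_{\T}\|A_{\epsilon}\|^{2}=\sup_{\T}\|T+\epsilon I\|$ is bounded, so a subsequence converges to a polynomial $A$ of degree $\le n$ with $A^{*}A=T$ on $\T$, and since $|\det A|^{2}=\det T\not\equiv 0$ on $\T$, Hurwitz's theorem forces $\det A$ to remain zero-free in $\D$. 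The details of the positive-definite input, and the care needed so that the reflection argument keeps everything rational, are precisely what is laid out in \cite{DGK} (with a zero-peeling alternative to the $\epsilon$-limit appearing in \cite{Dj}).
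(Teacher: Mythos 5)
Your proposal is correct, but it is worth noting that the paper never proves Theorem \ref{FR} at all: it is quoted as the classical result, with the reader sent to \cite{DGK} for a proof, and the surrounding discussion attributes the strictly positive-definite case to Rosenblatt \cite{Rosenblatt} and indicates that zeros of $\det T$ on $\T$ are to be factored out explicitly as in \cites{DGK,Dj}. So your route differs from the one the paper points to in how the degenerate boundary behavior is handled. You import only the strictly positive case (the genuinely non-elementary input, which you correctly isolate), then get polynomiality, the degree bound $\le n$, and zero-freeness of $\det A$ in $\D$ from the reflection identity $A_*=TA^{-1}$ --- this step is sound, since $A^{-1}$ is holomorphic on $\cd$ when $A$ is rational and invertible there, and two rational functions agreeing on $\T$ agree globally --- and finally you pass from $T+\epsilon I$ to $T$ by uniform coefficient bounds, a convergent subsequence, and Hurwitz together with $|\det A|^2=\det T\not\equiv 0$ on $\T$. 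That soft $\epsilon$-limit is shorter and cleaner than peeling off the finitely many boundary zeros, but it is non-constructive, whereas the zero-extraction route of \cites{DGK,Dj} keeps the factorization algorithmic, which matters for the paper's emphasis on computing TFRs in examples. One caveat to keep in mind: the black box you import must include rationality (indeed polynomiality up to the reflection argument) of the factor for a Laurent-polynomial symbol; as you note this is classical, but it is part of the imported statement rather than a formal consequence of outer factorization in $H^2$, and the parenthetical reduction via the one-variable case of Theorem \ref{dritschel1} plus a ``squaring step'' would itself need an argument to produce a square invertible factor.
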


We think it is worthwhile to show how to go from this theorem 
to the degenerate version, Theorem \ref{genFR}, using
ideas from \cite{Dj}.
The key tool is the Smith normal form.  

\begin{theorem}[Smith normal form] \label{SNF}
Let $P \in \C^{M\times N}[z]$ be a matrix polynomial.
Then, there exist $T_1\in \C^{M\times M}[z],T_2\in \C^{N\times N}[z]$ with matrix
polynomial inverses (equivalently, with constant determinants)
and $D \in \C^{M\times N}[z]$ 
such that $P = T_1 D T_2$.
The matrix $D$ has the following form:
every entry off the
main diagonal of $D$ is zero and the main diagonal consists of polynomials $d_1,\dots, d_k$
such that $d_j$ divides $d_{j+1}$. 
Here $k= \min\{N,M\}$ and 
the $d_j$ may be zero for $j$ large enough.
\end{theorem}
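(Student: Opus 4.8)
The statement to prove is the Smith normal form theorem (Theorem \ref{SNF}).

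\medskip

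The plan is to prove this by the standard algorithmic reduction, mimicking the proof over a principal ideal domain, using that $\C[z]$ is a Euclidean domain with the degree function as Euclidean norm. The elementary row and column operations over $\C[z]$ — swapping two rows (columns), adding a polynomial multiple of one row (column) to another, and scaling a row (column) by a nonzero constant — are each realized by left- or right-multiplication by a matrix polynomial with constant nonzero determinant, and compositions of these are again of this form. So it suffices to show $P$ can be brought to the claimed diagonal form by such operations, and then absorb all the accumulated operations into $T_1$ (the row operations) and $T_2$ (the column operations).

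\medskip

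The key steps, carried out by induction on $\min\{M,N\}$, are as follows. If $P=0$ we are done, so assume $P\neq 0$. First, among all nonzero entries reachable by row/column swaps, select one of minimal degree and move it to the $(1,1)$ position; call it $d$. Second, use the division algorithm in $\C[z]$: for each entry $p_{i1}$ in the first column, write $p_{i1}=q_i d + r_i$ with $\deg r_i<\deg d$, and subtract $q_i$ times the first row from the $i$-th row; do the analogous thing with the first row using column operations. If any resulting remainder $r_i$ is nonzero, it has strictly smaller degree than $d$, so we move it to the $(1,1)$ spot and repeat; since the degree of the $(1,1)$ entry strictly decreases each time, this loop terminates, leaving a matrix whose first row and first column are zero except for the $(1,1)$ entry $d_1$. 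Third — the divisibility condition — if some entry $p_{ij}$ (with $i,j\geq 2$) is not divisible by $d_1$, add column $j$ to column $1$ and run the reduction loop again; this produces a new $(1,1)$ entry of strictly smaller degree, so after finitely many such adjustments every remaining entry is divisible by $d_1$. Fourth, apply the induction hypothesis to the bottom-right $(M-1)\times(N-1)$ block (whose entries are all multiples of $d_1$), obtaining diagonal entries $d_2,\dots,d_k$ with $d_j\mid d_{j+1}$; the divisibility $d_1\mid d_2$ is preserved because $d_1$ divides every entry of that block and the induction produces $d_2$ as a $\C[z]$-combination of those entries. Finally, rescale each $d_j$ by a nonzero constant if one wishes to normalize leading coefficients (this is optional for the statement as given). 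Collecting the left-multipliers into $T_1$ and the right-multipliers into $T_2$ gives $P=T_1 D T_2$ with $T_1,T_2$ having constant nonzero determinant, hence matrix polynomial inverses by the cofactor formula.

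\medskip

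I expect the main obstacle to be purely bookkeeping rather than conceptual: one must be careful that the degree-decrease argument genuinely forces termination at each stage (it does, since $\deg$ takes values in $\mathbb{Z}_{\geq 0}$ and strictly decreases), and that the divisibility-forcing step in the third point does not disturb the zero structure already achieved in the first row and column beyond what is repaired by re-running the reduction loop. The one genuinely substantive point is verifying $d_1\mid d_2$ after invoking induction; this follows because $d_1$ divides every entry of the lower-right block, and the induction's top-left output $d_2$ lies in the ideal generated by those entries (being reachable from them by elementary operations and division), so $d_1$ divides it. Everything else is standard Euclidean-domain linear algebra, and the translation of elementary operations into matrix polynomials with constant determinant is immediate.
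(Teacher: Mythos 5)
Your proof is correct: it is the standard Euclidean-algorithm reduction over $\C[z]$, with the termination and divisibility points (strict degree decrease of the $(1,1)$ entry, and $d_1$ dividing every entry of the lower-right block before inducting) handled properly, and the non-square and zero-diagonal cases covered. The paper itself does not prove this theorem but simply cites Hoffman--Kunze, and your argument is essentially the classical proof found in such references, so there is nothing to reconcile.
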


See Hoffman-Kunze \cite{HK}. 

\begin{proof}[Proof of Theorem \ref{genFR}]
The function $G(z) = z^n T(z)$ is a polynomial matrix
and therefore has Smith normal form decomposition
\[
G(z) = T_1(z) \begin{pmatrix} D(z) & 0 \\ 0 & 0 \end{pmatrix} T_2(z).
\]
Here $T_1,T_2$ are matrix polynomials with matrix polynomial inverses
 while 
 \[
 D(z)=\text{diag}(d_1(z),\dots,d_r(z))
 \]
  is an $r\times r$ diagonal matrix 
with only non-zero polynomials on the diagonal.
Notice that $T(z)$ has rank $r$ whenever $\det D(z)\ne 0$, $z\ne 0$.
Since $T$ is self-adjoint on $\T$, we have $T(z) = T(1/\bar{z})^*$ for $z\ne 0$
and so
\begin{equation} \label{alsohave}
T_2^{-1}(1/\bar{z})^* T(z) T_2^{-1}(z) = z^{-n} T_2^{-1}(1/\bar{z})^* T_1(z) \begin{pmatrix} D(z) & 0 \\ 0 & 0 \end{pmatrix} 
= z^n \begin{pmatrix} D(1/\bar{z})^* & 0 \\ 0 & 0 \end{pmatrix} T_1(1/\bar{z})^*T_2^{-1}(z)
\end{equation}
is a matrix Laurent polynomial which is positive semi-definite on $\T$ 
and with $0$ in the last $N-r$ columns and rows.  Thus, \eqref{alsohave}
has the form $\begin{pmatrix} T_0(z) & 0 \\ 0 & 0 \end{pmatrix}$
where $T_0$ is an $r\times r$ matrix Laurent polynomial which is positive semi-definite on $\T$
and crucially satisfying $\det T_0 \not\equiv 0$ since $T$ has rank $r$
outside of a finite set.

By Theorem \ref{FR}, there exists an $r\times r$ matrix polynomial
$A_0$ such that $\det A_0(z) \ne 0$ in $\D$ and $A_0(z)^*A_0(z) = T_0(z)$ on $\T$.
If we set $V=T_2$ and
\[
A = \begin{pmatrix} A_0 & 0_{r\times (N-r)} \end{pmatrix} V
\]
then $A(z)^*A(z) = T(z)$ on $\T$.
Note that $A(1/\bar{z})^* A(z) = T(z)$ holds in $\C\setminus \{0\}$
since both sides are analytic and agree on $\T$.

Our degree bound on $A$ follows from the fact that
\[
z^{n} T(z) V(z)^{-1} \begin{pmatrix} A_0(z)^{-1} \\ 0 \end{pmatrix}  = z^n A(1/\bar{z})^*
\]
is analytic at $0$.  A right rational inverse of $A$ is given by $V^{-1} \begin{pmatrix} A_0^{-1} \\ 0 \end{pmatrix}$.
\end{proof}

The matrix Fej\'er-Riesz factorization 
described is maximal in the sense of the following
theorem.  One can also describe all other factorizations.
There is
nothing essentially new about this result, but
it is probably difficult to attribute.  It
could be deduced from inner-outer factorizations.

\begin{theorem} \label{genFR-uniqueness}
Assuming the setup and notation 
of Theorem \ref{genFR}.
For any other factorization $T = C^*C$ on $\T$ with a matrix polynomial $C$, 
there exists a rational iso-inner 
function $\Phi$ such that $C = \Phi A$
(necessarily, $\Phi = CB$).
If $C$ has a right rational inverse holomorphic in $\D$ then $\Phi$ is a constant unitary matrix.
\end{theorem}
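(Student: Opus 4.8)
The plan is to establish the existence and iso-inner nature of $\Phi = CB$ by exploiting the identity $A^*A = C^*C = T$ on $\T$ together with the analytic-continuation trick already used twice in the excerpt. Recall from Theorem \ref{genFR} that $A = \begin{pmatrix} A_0 & 0 \end{pmatrix} V$ where $A_0$ is $r \times r$ with $\det A_0$ nonvanishing in $\D$, $V$ is a polynomial matrix with polynomial inverse, and $A$ has a right rational inverse $B = V^{-1}\begin{pmatrix} A_0^{-1} \\ 0 \end{pmatrix}$ which is holomorphic in $\D$. Set $\Phi := CB$. Since $C$ is a polynomial and $B$ is rational and holomorphic in $\D$, $\Phi$ is rational and holomorphic in $\D$ away from the zeros of $\det A_0$; the first task is to show these apparent singularities are removable and that $\Phi$ is actually iso-inner.

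First I would verify the factorization $C = \Phi A$ holds, i.e. $CBA = C$. We do not have $BA = I$ (only $AB = I$), so this needs an argument: the point is that $BA$ acts as the identity on the range of any vector of the form $A(z)\xi$, and more to the point $A(z)^*A(z) = C(z)^*C(z)$ on $\T$ forces $C(z)$ and $A(z)$ to have the same kernel for $z \in \T$ (if $A(z)\xi = 0$ then $\xi^* C(z)^* C(z)\xi = \xi^* A(z)^* A(z)\xi = 0$, so $C(z)\xi = 0$), hence $C(z) = C(z) B(z) A(z)$ for $z\in\T$ where $B(z)$ is defined, and by rationality this extends. Next, on $\T$ we compute $\Phi(z)^*\Phi(z) = B(z)^* C(z)^* C(z) B(z) = B(z)^* A(z)^* A(z) B(z) = (A(z)B(z))^* (A(z)B(z)) = I$, using $AB = I$. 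So $\Phi$ is isometry-valued on $\T$ wherever defined. By the maximum principle for rational iso-inner functions (Proposition \ref{maxprinciple}), once we know $\Phi$ is analytic in $\D$ we get $\|\Phi\| \le 1$ there, and then $\Phi$ is genuinely iso-inner. To get analyticity in $\D$: the only possible poles of $\Phi = CB$ lie among zeros of $\det A_0$ in $\D$, but a bounded-near-$\T$ rational function whose slices $\zeta \mapsto \Phi(\zeta\omega)$ are bounded on $\T$ cannot have poles accumulating — more cleanly, I would argue as in the proof of Proposition \ref{maxprinciple} that any pole of a slice would make it unbounded near $\T$, contradicting $\Phi^*\Phi = I$ on $\T$, so the slices and hence $\Phi$ are analytic in $\D$.

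For the uniqueness clause $\Phi = CB$: if $C = \Psi A$ for some iso-inner $\Psi$, then right-multiplying by $B$ gives $CB = \Psi A B = \Psi$, so $\Psi = CB = \Phi$. For the final sentence: if $C$ itself has a right rational inverse $E$ holomorphic in $\D$ (so $CE = I$), then by the symmetric argument $\breve{\Phi}$-type reasoning — or directly — $\Phi$ has a right inverse holomorphic in $\D$, namely $AE$ (since $\Phi \cdot AE = CB A E = C E = I$, using $C = \Phi A$, hence $\Phi A B = C$ wait — cleaner: $\Phi(AE) = (CB)(AE)$; better to use $C = \Phi A$ so $I = CE = \Phi A E$, giving $AE$ as a right inverse of $\Phi$). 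A rational iso-inner function $\Phi:\D\to\C^{N\times N}$ with $\Phi^*\Phi = I$ on $\T$ and possessing a holomorphic-in-$\D$ right inverse is both inner and co-inner with holomorphic inverse, hence $\Phi^{-1} = \Phi^*$ on $\T$ is analytic in $\D$ and so is $\Phi$; a matrix rational inner function whose inverse is also analytic in $\D$ must be a constant unitary (its determinant is a finite Blaschke product with analytic-in-$\D$ reciprocal, hence constant, forcing degree zero, and then $\Phi$ itself is constant by Proposition \ref{degdet} or directly).

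\emph{The hard part} will be pinning down the removability of singularities / analyticity of $\Phi$ in $\D$ cleanly — i.e. ruling out that $\det A_0$ vanishes in $\D$ in a way that survives in $\Phi = CB$ — and making the kernel-matching step $C = \Phi A$ fully rigorous at points where $A$ or $C$ drops rank inside $\D$ rather than only on $\T$. I expect both to follow from the now-familiar pattern: prove the identity on $\T$, invoke that both sides are rational, and extend by analytic continuation, with the maximum principle (Proposition \ref{maxprinciple}) doing the work of converting boundary boundedness into interior boundedness. The constant-unitary conclusion is then a short degree count.
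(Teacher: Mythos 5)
Your proposal is correct and is, at its core, the paper's own proof: since $B = V^{-1}\begin{pmatrix} A_0^{-1} \\ 0 \end{pmatrix}$, your $\Phi = CB$ is literally the paper's $\Phi = C_0A_0^{-1}$, where $CV^{-1} = \begin{pmatrix} C_0 & C_1 \end{pmatrix}$. The genuine differences are two. For $C = \Phi A$, the paper conjugates $A^*A = C^*C$ by $V^{-1}$ to get $C_0^*C_0 = A_0^*A_0$ and $C_1^*C_1 = 0$ on $\T$, hence $C_1 \equiv 0$, after which $\Phi A = C$ is immediate; your kernel-inclusion argument ($\ker A(z) \subseteq \ker C(z)$ on $\T$ together with $A(I-BA)=0$) reaches the same identity and is sound. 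For the final clause, the paper observes that an isometry with a right inverse must be square, so $\Phi$ is unitary-valued on $\T$ with inverse $AC'$; both are contractive in $\D$ by the maximum principle, hence unitary-valued in $\D$, hence constant (Lemma \ref{Schurbreakup}), whereas you run a determinant-degree count via Proposition \ref{degdet} — a legitimate alternative, but you should justify squareness of $\Phi$ rather than build it into the notation $\C^{N\times N}$ (the one-line reason is exactly the isometry-with-right-inverse remark). Finally, the step you flag as ``the hard part'' is not an issue: Theorem \ref{genFR} already guarantees $\det A_0(z) \ne 0$ for $z\in\D$ and $B$ analytic in $\D$, so $\Phi = CB$ is analytic in $\D$ outright, and the only singularities to remove lie on $\T$, where rationality plus boundedness (isometry on $\T$) suffices. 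Be aware that the fallback principle you suggest there — that interior poles would contradict boundedness on $\T$ — is false in general (e.g.\ $1/z$ is bounded on $\T$ but has a pole in $\D$); fortunately it is not needed anywhere in the argument.
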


\begin{proof}
Suppose $T = C^* C$ on $\T$.  Then, we may write $C V^{-1} = \begin{pmatrix} C_0 & C_1\end{pmatrix}$ 
where $C_0$ has $r$
columns.  Since 
\[
(V^{-1})^* A^*A V^{-1} = \begin{pmatrix} A_0^*A_0 & 0 \\ 0 & 0 \end{pmatrix}
= \begin{pmatrix} C_0^*C_0 & C_0^*C_1 \\ 
C_1^*C_0 & C_1^*C_1 \end{pmatrix} = (V^{-1})^* C^*C V^{-1}
\]
we see that $C_0^*C_0 = A_0^*A_0$, $C_1^*C_1 = 0$ on $\T$.  This implies $C_1 \equiv 0$.
Then, $\Phi := C_0 A_0^{-1}$ is analytic on $\D$ and isometry-valued on $\T$.
Any poles on $\T$ are necessarily removable because 
$\Phi$ is rational and bounded on $\T$.  
We also have $\Phi A = C$.  
If $C$ has right rational inverse $C'$ then $\Phi AC' = I$. 
An isometry can only have a right inverse if it is
square, so $\Phi$ must be square (hence
unitary on $\T$) and $AC'$ must be unitary-valued on $\T$.
By the maximum principle, $\Phi$ and $AC'$ are contractive
in the disk; however, since they are inverses of each other they must be 
unitary-valued in the disk.  
Such analytic functions are constant. 
(Lemma \ref{Schurbreakup} 
proves something more general 
than this.)
\end{proof}

We now sketch a simple proof of Dritschel's positive definite
multivariable
Fej\'er-Riesz result (Thm \ref{dritschel1}).
Although it borrows elements from the original 
proof, we think it has some nice efficiencies in
exposition.     

\begin{proof}[Proof of Theorem \ref{dritschel1}]
Let $n$ be a positive integer and define the multivariable Cesaro
summation operator $C_n$ which we apply
to $N\times N$ 
matrix Laurent polynomials $L(z) = \sum_{k\in \Z^d} L_k z^k$
\[
(C_n L)(z) = \sum_{k\in \Z^d} c^n_k L_k z^k = \int_{\T^d} F_n(z,\zeta) L(\zeta) d\sigma(\zeta)
\]
where 
\[
c^n_k =\begin{cases} \prod_{j=1}^{d} \frac{n-|k_j|}{n} & \text{ for } |k_1|,\dots,|k_d|\leq n \\
0 & \text{ otherwise } \end{cases},
\]
\[
F_n(z,\zeta) = \frac{1}{n^d}\prod_{j=1}^d \left|\frac{1-z_j^n\bar{\zeta}_j^n}{1-z_j\bar{\zeta}_j}\right|^2 = \sum_{k\in \Z^d} c^n_k z^k
\]
is the Fej\'er kernel and $d\sigma$ is normalized Lebesgue measure on $\T^d$.

Let $\mcL_m$ be the vector space of $N\times N$ Laurent polynomials
of degree at most $m$ in each variable separately.
We shall consider $C^m_n := C_n|_{\mcL_m}: \mcL_m\to \mcL_m$.  
By basic properties of Cesaro summation, $C^m_n L \to L$ uniformly on
$\T^d$ as $n\to \infty$ for $L\in \mcL_m$.
Since the set of linear operators $B(\mcL_m)$ on $\mcL_m$ is
finite dimensional, $C^m_n$ tends to the identity as $n\to \infty$ with respect
to any norm on $B(\mcL_m)$. 
In particular, for $n$ large enough $C^m_n$ is invertible
and $(C^m_n)^{-1}$ tends to the identity as $n\to \infty$.

We next point out that if $L\in \mcL_m$ is positive semi-definite
on $\T^d$ then $C^m_n L$ is a sum of squares.
The reason is that on $\T^d$, $F_n(z,\zeta) L(\zeta)$
is a Laurent polynomial of degree at most $n+m$ with respect
to $\zeta$.  Then, the integral representation of $C_n L$ can be
computed via ``quadrature.''  
Indeed, for any $M$, if $H \in \mcL_M$ and $\mu = e^{2\pi i /(M+1)}$ then
\[
\int_{\T^d} H(\zeta) d\sigma(\zeta) = \frac{1}{(M+1)^d} \sum_{0\leq j_1,\dots, j_d \leq M} H(\mu^{j_1},\dots, \mu^{j_d}).
\]
This can be proven by testing on monomials.
This means that $C_n L(z)$ is a positive
finite linear combination of the terms $F_{n}(z, (\mu^{j_1},\dots, \mu^{j_d}))L(\mu^{j_1},\dots, \mu^{j_d})$.
Since $F_n$ is evidently a squared polynomial and each value of $L$ on $\T^d$ is assumed positive
semi-definite, we see that $C_n L$ is a sum of squares of polynomials.

Now, let $T \in \mcL^m$ be strictly positive on $\T^d$, 
i.e. there exists $\delta>0$ such that
$T(z)\geq \delta I$ for $z\in \T^d$.
For $n$ large enough, $T_n := (C^m_n)^{-1} T$ is also strictly positive.
Then, $T = C_n T_n$ is a Cesaro sum of a positive Laurent polynomial
which was already shown to be a sum of squares.
\end{proof}

\subsection{PSD kernels} \label{PSDsec}

We now discuss the proof of Lemma \ref{Skernel-lemma}
which claims that for $S:\D\to\C^{M\times N}$ analytic
and $\|S(z)\|\leq 1$ in $\D$ we have
that 
\[
K_S(w,z) = \frac{I_N-S(w)^*S(z)}{1-\bar{w}z}
\]
is positive semi-definite (PSD).
Let us recall the abstract definition of PSD for matrix or operator-valued
kernels.

\begin{definition} \label{def:PSD}
Let $X$ be a set, $\mathcal{L}$ a complex Hilbert space,  
and $K:X\times X \to B(\mathcal{L})$
a function; here $B(\mathcal{L})$ is
the set of bounded linear self-maps of $\mathcal{L}$.  
We say that $K$ is a \emph{PSD kernel} if
 for any $x_1,\dots, x_n \in X$ and $v_1,\dots, v_n \in \mathcal{L}$
we have
\[
\sum_{i,j} \langle K(x_i,x_j) v_j,v_i\rangle \geq 0.
\]
\end{definition}

Notice that if $(x,y) \mapsto K(x,y)$ is a PSD kernel, then
$(x,y) \mapsto K(y,x)$ is not necessarily PSD except
in the scalar case $\mathcal{H} = \C$.

\begin{definition}  \label{rank}
The \emph{rank} of $K$ is the maximum of the 
ranks of the block operators $(K(x_i,x_j))_{i,j}$
as we vary over $n$ and $x_1,\dots, x_n \in X$.
\end{definition}

\begin{proof}[Proof of Lemma \ref{Skernel-lemma}]
Our proof uses rudiments of
vector-valued Hardy spaces on the unit disk.
See Agler-McCarthy \cite{AMbook} for details.

Let $H_{M} = H^2(\D)\otimes\C^M$ be the set of $M$-dimensional column vectors with entries
in the Hardy space on the unit disk $H^2(\D)$.  
Left multiplication by $S$, $M_S:H_N\to H_M$, 
is contractive.  If $k_w(z) = k(z,w) := \frac{1}{1-\bar{w}z}$ is 
the Szeg\H{o} kernel, then by a fundamental
formula in reproducing kernel Hilbert space theory
\[
M_S^* (k_w \otimes v) = k_w \otimes S(w)^*v
\]
for $v\in \C^M$.
We see that
\[
\langle (I-M_S M_S^*) (k_w\otimes v_1), k_z \otimes v_2 \rangle_{H_M} =
k(z,w) \langle (I-S(z)S(w)^*)v_1, v_2\rangle_{\C^M}
\]
which after a short calculation using the fact that $I-M_S M_S^* \geq 0$ shows 
$(z,w) \mapsto \frac{I-S(z)S(w)^*}{1-z\bar{w}} \text{ is PSD}.$
We could apply the same argument to $\breve{S}(z):= S(\bar{z})^*$ 
to see that 
$(z,w) \mapsto \frac{I - S(\bar{z})^* S(\bar{w})}{1-z\bar{w}} \text{ is PSD.}$
Replace $z,w$ with their conjugates and relabel the
variables to see that $K_S(w,z)$ is PSD.
\end{proof}

\begin{proof}[Proof of Proposition \ref{degdet}]
Assuming $S:\D\to \C^{N\times N}$ is rational inner we need to compute the rank of the positive
semi-definite kernel $(w,z)\mapsto \frac{I- S(w)^*S(z)}{1-\bar{w} z}.$
We shall use notation from the proof of Lemma \ref{Skernel-lemma} above.
As in said proof,
it is notationally easier to deal with the
kernel 
\[
K(z,w) = \frac{I-S(z) S(w)^*}{1-z\bar{w}}
\]
and we can reduce to this case by 
replacing $S$ with $S(\bar{z})^*$.

Now, $K$ is the reproducing kernel for $H_N\ominus S H_N$.
This follows from the fact that $S$ is inner: $SH_N$
is a closed subspace of $H_N$ and has reproducing kernel
\[
\frac{S(z)S(w)^*}{1-z\bar{w}}
\]
which can be verified by the following calculation
\[
\langle Sf, k_w S S(w)^*v \rangle_{H_N} = \langle f, k_w S(w)^*v\rangle_{H_N} = \langle S(w)f(w), v \rangle_{\C^N}
\]
for $f \in H_N$.
The rank of $K$ is the dimension of $H_N\ominus SH_N$.

To count this dimension we write $S =Q/p$ in lowest terms.
Since $S$ is bounded on $\T$ it can have no poles on $\T$,
and therefore $p$ has no zeros in $\overline{\D}$.
Let $Q(z) = T_1(z) D(z) T_2(z)$ be the Smith
normal form decomposition for $Q$ (Theorem \ref{SNF} above).  
Notice
that $D$ has full rank
on $\T$ since $S$ is inner.
Write $D = \text{diag}(d_1,\dots, d_N)$.
Then, $\det Q = c \det D = c \prod_j d_j$
where $c = \det T_1 \det T_2$ is a constant
because $T_1,T_2$ have
polynomial inverses.  
Since $S$ is inner $\det S = \frac{\det Q}{p^N}$
is a finite Blaschke product.
Its degree equals its number of zeros in $\D$
which equals the number of zeros of $\det Q$ in
$\D$ since $p$ has none.

The vector space $H_N \ominus S H_N$
is isomorphic to the vector space quotient
\[
H_N / S H_N = H_N / (T_1 D T_2) H_N = H_N/(T_1 D H_N)
\cong H_N/ D H_N.
\]
The first equality holds because $p$ has no zeros in $\overline{\D}$,
the second holds because $T_2$ has a polynomial inverse,
and the last isomorphism holds because $T_1$ has a polynomial
inverse.
Recalling $D = \text{diag}(d_1,\dots, d_N)$
we note the dimension of $H^2/ d_j H^2$ 
is the number of zeros of $d_j$ in $\D$
and therefore
the dimension of $H_N/ D H_N$
is the number of zeros of $\prod_{j=1}^{N} d_j$
inside $\D$ (counting multiplicities).
\end{proof}

This proof appears in \cite{BickelKnese}.

\begin{bibdiv}
\begin{biblist}

\bib{Agler1}{article}{
   author={Agler, Jim},
   title={On the representation of certain holomorphic functions defined on
   a polydisc},
   conference={
      title={Topics in operator theory: Ernst D. Hellinger memorial volume},
   },
   book={
      series={Oper. Theory Adv. Appl.},
      volume={48},
      publisher={Birkh\"auser, Basel},
   },
   date={1990},
   pages={47--66},
   review={\MR{1207393}},
}

\bib{Agler2}{article}{
author={Agler, Jim},
title={Some interpolation theorems of Nevanlinna-Pick type},
date={1988},
status={unpublished},
}

\bib{AMcrelle}{article}{
   author={Agler, Jim},
   author={McCarthy, John E.},
   title={Nevanlinna-Pick interpolation on the bidisk},
   journal={J. Reine Angew. Math.},
   volume={506},
   date={1999},
   pages={191--204},
   issn={0075-4102},
   review={\MR{1665697}},
   doi={10.1515/crll.1999.004},
}

\bib{AMbook}{book}{
   author={Agler, Jim},
   author={McCarthy, John E.},
   title={Pick interpolation and Hilbert function spaces},
   series={Graduate Studies in Mathematics},
   volume={44},
   publisher={American Mathematical Society, Providence, RI},
   date={2002},
   pages={xx+308},
   isbn={0-8218-2898-3},
   review={\MR{1882259}},
   doi={10.1090/gsm/044},
}

\bib{AMYmonotone}{article}{
   author={Agler, Jim},
   author={McCarthy, John E.},
   author={Young, N. J.},
   title={Operator monotone functions and L\"{o}wner functions of several
   variables},
   journal={Ann. of Math. (2)},
   volume={176},
   date={2012},
   number={3},
   pages={1783--1826},
   issn={0003-486X},
   review={\MR{2979860}},
   doi={10.4007/annals.2012.176.3.7},
}

\bib{AMYcara}{article}{
   author={Agler, Jim},
   author={McCarthy, John E.},
   author={Young, N. J.},
   title={A Carath\'{e}odory theorem for the bidisk via Hilbert space methods},
   journal={Math. Ann.},
   volume={352},
   date={2012},
   number={3},
   pages={581--624},
   issn={0025-5831},
   review={\MR{2885589}},
   doi={10.1007/s00208-011-0650-7},
}

\bib{Ando}{article}{
   author={And\^o, T.},
   title={On a pair of commutative contractions},
   journal={Acta Sci. Math. (Szeged)},
   volume={24},
   date={1963},
   pages={88--90},
   issn={0001-6969},
   review={\MR{0155193}},
}

\bib{Ball}{article}{
   author={Ball, Joseph A.},
   title={Multidimensional circuit synthesis and multivariable dilation
   theory},
   journal={Multidimens. Syst. Signal Process.},
   volume={22},
   date={2011},
   number={1-3},
   pages={27--44},
   issn={0923-6082},
   review={\MR{2771469}},
   doi={10.1007/s11045-010-0123-2},
}

\bib{BallKal}{article}{
   author={Ball, Joseph A.},
   author={Kaliuzhnyi-Verbovetskyi, Dmitry S.},
   title={Rational Cayley inner Herglotz-Agler functions: positive-kernel
   decompositions and transfer-function realizations},
   journal={Linear Algebra Appl.},
   volume={456},
   date={2014},
   pages={138--156},
   issn={0024-3795},
   review={\MR{3223895}},
   doi={10.1016/j.laa.2013.10.022},
}

\bib{BT98}{article}{
   author={Ball, Joseph A.},
   author={Trent, Tavan T.},
   title={Unitary colligations, reproducing kernel Hilbert spaces, and
   Nevanlinna-Pick interpolation in several variables},
   journal={J. Funct. Anal.},
   volume={157},
   date={1998},
   number={1},
   pages={1--61},
   issn={0022-1236},
   review={\MR{1637941}},
   doi={10.1006/jfan.1998.3278},
}

\bib{BSV05}{article}{
   author={Ball, Joseph A.},
   author={Sadosky, Cora},
   author={Vinnikov, Victor},
   title={Scattering systems with several evolutions and multidimensional
   input/state/output systems},
   journal={Integral Equations Operator Theory},
   volume={52},
   date={2005},
   number={3},
   pages={323--393},
   issn={0378-620X},
   review={\MR{2184571}},
   doi={10.1007/s00020-005-1351-y},
}

\bib{BickelKnese}{article}{
   author={Bickel, Kelly},
   author={Knese, Greg},
   title={Inner functions on the bidisk and associated Hilbert spaces},
   journal={J. Funct. Anal.},
   volume={265},
   date={2013},
   number={11},
   pages={2753--2790},
   issn={0022-1236},
   review={\MR{3096989}},
   doi={10.1016/j.jfa.2013.08.002},
}

\bib{wavelet}{article}{
   author={Charina, Maria},
   author={Putinar, Mihai},
   author={Scheiderer, Claus},
   author={St\"{o}ckler, Joachim},
   title={An algebraic perspective on multivariate tight wavelet frames. II},
   journal={Appl. Comput. Harmon. Anal.},
   volume={39},
   date={2015},
   number={2},
   pages={185--213},
   issn={1063-5203},
   review={\MR{3352013}},
   doi={10.1016/j.acha.2014.09.003},
}

\bib{CCCP}{article}{
   author={Charina, Maria},
   author={Conti, Costanza},
   author={Cotronei, Mariantonia},
   author={Putinar, Mihai},
   title={System theory and orthogonal multi-wavelets},
   journal={J. Approx. Theory},
   volume={238},
   date={2019},
   pages={85--102},
   issn={0021-9045},
   review={\MR{3912669}},
   doi={10.1016/j.jat.2017.09.004},
}

\bib{CW99}{article}{
   author={Cole, Brian J.},
   author={Wermer, John},
   title={Ando's theorem and sums of squares},
   journal={Indiana Univ. Math. J.},
   volume={48},
   date={1999},
   number={3},
   pages={767--791},
   issn={0022-2518},
   review={\MR{1736979}},
   doi={10.1512/iumj.1999.48.1716},
}

\bib{DGK}{article}{
   author={Delsarte, P.},
   author={Genin, Y.},
   author={Kamp, Y.},
   title={A simple approach to spectral factorization},
   journal={IEEE Trans. Circuits and Systems},
   volume={25},
   date={1978},
   number={11},
   pages={943--946},
   issn={0098-4094},
   review={\MR{508983}},
   doi={10.1109/TCS.1978.1084403},
}

\bib{Dj}{article}{
   author={Djokovi\'c, Dragomir \v Z.},
   title={Hermitian matrices over polynomial rings},
   journal={J. Algebra},
   volume={43},
   date={1976},
   number={2},
   pages={359--374},
   issn={0021-8693},
   review={\MR{0437565}},
   doi={10.1016/0021-8693(76)90119-8},
}

\bib{mD1}{article}{
   author={Dritschel, Michael A.},
   title={On factorization of trigonometric polynomials},
   journal={Integral Equations Operator Theory},
   volume={49},
   date={2004},
   number={1},
   pages={11--42},
   issn={0378-620X},
   review={\MR{2057766}},
   doi={10.1007/s00020-002-1198-4},
}

\bib{mD2}{article}{
  author={Dritschel, Michael A.},
  title={Factoring non-negative operator-valued trigonometric polynomials in two variables},
  url={https://arxiv.org/abs/1811.06005},
  status={preprint},
  year={2018},
  }

\bib{GW04}{article}{
   author={Geronimo, Jeffrey S.},
   author={Woerdeman, Hugo J.},
   title={Positive extensions, Fej\'er-Riesz factorization and autoregressive
   filters in two variables},
   journal={Ann. of Math. (2)},
   volume={160},
   date={2004},
   number={3},
   pages={839--906},
   issn={0003-486X},
   review={\MR{2144970}},
   doi={10.4007/annals.2004.160.839},
}

\bib{GIK16}{article}{
   author={Geronimo, Jeffrey S.},
   author={Iliev, Plamen},
   author={Knese, Greg},
   title={Polynomials with no zeros on a face of the bidisk},
   journal={J. Funct. Anal.},
   volume={270},
   date={2016},
   number={9},
   pages={3505--3558},
   issn={0022-1236},
   review={\MR{3475462}},
   doi={10.1016/j.jfa.2016.02.002},
}

\bib{Getal09}{article}{
   author={Grinshpan, Anatolii},
   author={Kaliuzhnyi-Verbovetskyi, Dmitry S.},
   author={Vinnikov, Victor},
   author={Woerdeman, Hugo J.},
   title={Classes of tuples of commuting contractions satisfying the
   multivariable von Neumann inequality},
   journal={J. Funct. Anal.},
   volume={256},
   date={2009},
   number={9},
   pages={3035--3054},
   issn={0022-1236},
   review={\MR{2502431}},
   doi={10.1016/j.jfa.2008.09.012},
}

\bib{Getal}{article}{
   author={Grinshpan, Anatolii},
   author={Kaliuzhnyi-Verbovetskyi, Dmitry S.},
   author={Vinnikov, Victor},
   author={Woerdeman, Hugo J.},
   title={Matrix-valued Hermitian Positivstellensatz, lurking contractions,
   and contractive determinantal representations of stable polynomials},
   conference={
      title={Operator theory, function spaces, and applications},
   },
   book={
      series={Oper. Theory Adv. Appl.},
      volume={255},
      publisher={Birkh\"auser/Springer, Cham},
   },
   date={2016},
   pages={123--136},
   review={\MR{3617004}},
}

\bib{Getal16}{article}{
   author={Grinshpan, Anatolii},
   author={Kaliuzhnyi-Verbovetskyi, Dmitry S.},
   author={Vinnikov, Victor},
   author={Woerdeman, Hugo J.},
   title={Contractive determinantal representations of stable polynomials on
   a matrix polyball},
   journal={Math. Z.},
   volume={283},
   date={2016},
   number={1-2},
   pages={25--37},
   issn={0025-5874},
   review={\MR{3489057}},
   doi={10.1007/s00209-015-1587-4},
}

\bib{Getal16b}{article}{
   author={Grinshpan, Anatolii},
   author={Kaliuzhnyi-Verbovetskyi, Dmitry S.},
   author={Vinnikov, Victor},
   author={Woerdeman, Hugo J.},
   title={Stable and real-zero polynomials in two variables},
   journal={Multidimens. Syst. Signal Process.},
   volume={27},
   date={2016},
   number={1},
   pages={1--26},
   issn={0923-6082},
   review={\MR{3441374}},
   doi={10.1007/s11045-014-0286-3},
}

\bib{Getal17}{article}{
   author={Grinshpan, Anatolii},
   author={Kaliuzhnyi-Verbovetskyi, Dmitry S.},
   author={Vinnikov, Victor},
   author={Woerdeman, Hugo J.},
   title={Rational inner functions on a square-matrix polyball},
   conference={
      title={Harmonic analysis, partial differential equations, Banach
      spaces, and operator theory. Vol. 2},
   },
   book={
      series={Assoc. Women Math. Ser.},
      volume={5},
      publisher={Springer, Cham},
   },
   date={2017},
   pages={267--277},
   review={\MR{3688146}},
}

\bib{Hachez}{article}{
   author={Hachez, Yvan},
   author={Woerdeman, Hugo J.},
   title={The Fischer-Frobenius transformation and outer factorization},
   conference={
      title={Operator theory, structured matrices, and dilations},
   },
   book={
      series={Theta Ser. Adv. Math.},
      volume={7},
      publisher={Theta, Bucharest},
   },
   date={2007},
   pages={181--203},
   review={\MR{2389624}},
}

\bib{HK}{book}{
   author={Hoffman, Kenneth},
   author={Kunze, Ray},
   title={Linear algebra},
   series={Second edition},
   publisher={Prentice-Hall, Inc., Englewood Cliffs, N.J.},
   date={1971},
   pages={viii+407},
   review={\MR{0276251}},
}


\bib{GKdv}{article}{
   author={Knese, Greg},
   title={Polynomials defining distinguished varieties},
   journal={Trans. Amer. Math. Soc.},
   volume={362},
   date={2010},
   number={11},
   pages={5635--5655},
   issn={0002-9947},
   review={\MR{2661491}},
   doi={10.1090/S0002-9947-2010-05275-4},
}

\bib{GKAPDE}{article}{
   author={Knese, Greg},
   title={Polynomials with no zeros on the bidisk},
   journal={Anal. PDE},
   volume={3},
   date={2010},
   number={2},
   pages={109--149},
   issn={2157-5045},
   review={\MR{2657451}},
   doi={10.2140/apde.2010.3.109},
}

\bib{KneseRIFITSAC}{article}{
   author={Knese, Greg},
   title={Rational inner functions in the Schur-Agler class of the polydisk},
   journal={Publ. Mat.},
   volume={55},
   date={2011},
   number={2},
   pages={343--357},
   issn={0214-1493},
   review={\MR{2839446}},
   doi={10.5565\/PUBLMAT\_55211\_04},
}

\bib{K11}{article}{
   author={Knese, Greg},
   title={Schur-Agler class rational inner functions on the tridisk},
   journal={Proc. Amer. Math. Soc.},
   volume={139},
   date={2011},
   number={11},
   pages={4063--4072},
   issn={0002-9939},
   review={\MR{2823051}},
   doi={10.1090/S0002-9939-2011-10975-4},
}

\bib{Kummert89}{article}{
   author={Kummert, Anton},
   title={Synthesis of two-dimensional lossless $m$-ports with prescribed
   scattering matrix},
   journal={Circuits Systems Signal Process.},
   volume={8},
   date={1989},
   number={1},
   pages={97--119},
   issn={0278-081X},
   review={\MR{998029}},
   doi={10.1007/BF01598747},
}

\bib{nilpotent}{article}{
   author={Mathes, Ben},
   author={Omladi\v{c}, Matja\v{z}},
   author={Radjavi, Heydar},
   title={Linear spaces of nilpotent matrices},
   journal={Linear Algebra Appl.},
   volume={149},
   date={1991},
   pages={215--225},
   issn={0024-3795},
   review={\MR{1092879}},
   doi={10.1016/0024-3795(91)90335-T},
}


\bib{PR}{book}{
   author={Paulsen, Vern I.},
   author={Raghupathi, Mrinal},
   title={An introduction to the theory of reproducing kernel Hilbert
   spaces},
   series={Cambridge Studies in Advanced Mathematics},
   volume={152},
   publisher={Cambridge University Press, Cambridge},
   date={2016},
   pages={x+182},
   isbn={978-1-107-10409-9},
   review={\MR{3526117}},
   doi={10.1017/CBO9781316219232},
}

\bib{Rosenblatt}{article}{
   author={Rosenblatt, Murray},
   title={A multi-dimensional prediction problem},
   journal={Ark. Mat.},
   volume={3},
   date={1958},
   pages={407--424},
   issn={0004-2080},
   review={\MR{0092332}},
   doi={10.1007/BF02589495},
}

\bib{Rudin}{book}{
   author={Rudin, Walter},
   title={Function theory in polydiscs},
   publisher={W. A. Benjamin, Inc., New York-Amsterdam},
   date={1969},
   pages={vii+188},
   review={\MR{0255841}},
}

\bib{Scheiderer}{article}{
   author={Scheiderer, Claus},
   title={Sums of squares on real algebraic surfaces},
   journal={Manuscripta Math.},
   volume={119},
   date={2006},
   number={4},
   pages={395--410},
   issn={0025-2611},
   review={\MR{2223624}},
   doi={10.1007/s00229-006-0630-5},
}


\bib{Varo}{article}{
   author={Varopoulos, N. Th.},
   title={On an inequality of von Neumann and an application of the metric
   theory of tensor products to operators theory},
   journal={J. Functional Analysis},
   volume={16},
   date={1974},
   pages={83--100},
   review={\MR{0355642}},
}

\end{biblist}
\end{bibdiv}

\end{document}